\newcommand{\la}{\langle}
\newcommand{\ra}{\rangle}
\newcommand{\x}{\times}
\newcommand{\ox}{\otimes}
\newcommand{\too}{\to}
\newcommand{\bd}{\partial}
\newcommand{\cA}{{\mathcal{A}}}
\newcommand{\cF}{{\mathcal{F}}}
\newcommand{\cU}{{\mathcal{U}}}
\newcommand{\cV}{{\mathcal{V}}}
\newcommand{\ZZ}{\mathbb{Z}}
\newcommand{\CC}{\mathbb{C}}
\newcommand{\CP}{\mathbb{C}P}
\newcommand{\RP}{\mathbb{R}P}
\newcommand{\RR}{\mathbb{R}}
\newcommand{\QQ}{\mathbb{Q}}
\newcommand{\id}{\operatorname{id}}
\newcommand{\im}{\operatorname{im}}
\newcommand{\U}{\operatorname{U}}
\newcommand{\SO}{\operatorname{SO}}
\newcommand{\GL}{\operatorname{GL}}
\DeclareMathOperator{\lcm}{lcm}
\DeclareMathOperator{\coker}{coker}
\DeclareMathOperator{\Id}{Id}
\DeclareMathOperator{\End}{End}
\renewcommand{\a}{\alpha}
\renewcommand{\b}{\beta}
\newcommand{\g}{\gamma}
\newcommand{\e}{\varepsilon}
\newcommand{\s}{\sigma}
\renewcommand{\o}{\omega}
\newcommand{\G}{\Gamma}
\renewcommand{\S}{\Sigma}
\newcommand{\OO}{\mathrm{O}}
\newcommand{\UU}{\mathrm{U}}
\newcommand{\pr}{\mathrm{pr}}
\newcommand{\orb}{\scriptsize\mathrm{orb}}
\newtheorem{theorem}{Theorem}
\newtheorem{proposition}[theorem]{Proposition}
\newtheorem{lemma}[theorem]{Lemma}
\newtheorem{definition}[theorem]{Definition}
\newtheorem{corollary}[theorem]{Corollary}
\newtheorem{remark}[theorem]{Remark}
\title[Orbifold Gompf connected sum and K-contact manifolds]{Gompf connected sum for orbifolds and K-contact Smale-Barden manifolds}
\author[V. Mu\~{n}oz]{Vicente Mu\~{n}oz}
\address{Departamento de \'Algebra, Geometr\'{\i}a y Topolog\'{\i}a, Universidad de M\'alaga, 
Campus de Teatinos, s/n, 29071 M\'alaga, Spain}
\email{vicente.munoz@uma.es}
\subjclass[2010]{57R18, 53C25, 53D35, 57R17}
\keywords{Symplectic, orbifold, connected sum, K-contact, Seifert circle bundle}
\begin{document}

\begin{abstract}
 We develop the Gompf fiber connected sum operation for symplectic orbifolds. We use it
 to construct a symplectic $4$-orbifold with $b_1=0$ and containing symplectic surfaces
 of genus $1$ and $2$ that are disjoint and span the rational homology. This is used
 in turn to construct a K-contact Smale-Barden manifold with specified $2$-homology that 
 satisfies the known topological constraints with sharper estimates than the examples 
 constructed previously. The manifold can be chosen spin or non-spin. 
\end{abstract}

\maketitle

\section{Introduction}\label{sec:intro}

In geometry, a central question is to determine when a given manifold admits a specific geometric structure. 
Complex geometry provides with numerous examples of compact manifolds with rich topology, and there is a number
of topological properties that are satisfied by K\"ahler manifolds. 
If we forget about the integrability of the complex structure, then we are dealing with symplectic manifolds. There has
been enormous interest in the construction of (compact) symplectic manifolds that do not admit K\"ahler structures,
and in determining its topological properties \cite{OT}. 
In odd dimension, Sasakian and K-contact manifolds are natural analogues of K\"ahler and symplectic manifolds, respectively. 
The precise definition of such structures is recalled in section \ref{sec:k-contact}.
Sasakian geometry has become an important and active subject since \cite{BG}, 
and there is much interest on constructing K-contact manifolds which do not admit Sasakian structures.

The problem of the existence of simply connected K-contact non-Sasakian compact manifolds  (open problem 7.4.1 in \cite{BG})
was solved for dimensions $\geq 9$  
in \cite{CNY,CNMY,HT} and for dimension $7$ in \cite{MT} by a combination of various techniques 
in homotopy theory and symplectic geometry, but it is still open in dimension $5$ (cf.\ \cite[open problem 10.2.1]{BG}).
A simply connected compact $5$-manifold is called a {\it Smale-Barden manifold}. 
These manifolds are classified \cite{B,S} by their second homology group, that we write as 
  \begin{equation}\label{eqn:H2-1}
  H_2(M,\ZZ)=\ZZ^k\oplus( \mathop{\oplus}_{p,i}\, \ZZ_{p^i}^{c(p^i)}),
  \end{equation}
where $k=b_2(M)$, and its second Stiefel-Whitney class $w_2$, which is 
zero on all but one summand $\ZZ_{2^j}$, where the value $j=i(M)$ is the Barden invariant.

A Sasakian (compact) manifold $M$ has a $1$-dimensional foliation defined by the Reeb vector field, which gives an isometric flow,
and the transversal structure is K\"ahler. The Sasakian structure is called \emph{quasi-regular} if the leaves of the Reeb flow are
circles, in which case the leaf space $X$ is a cyclic K\"ahler orbifold and
the quotient map $\pi:M\to X$ has the structure of a Seifert circle bundle. Remarkably, a manifold $M$ admitting
a Sasakian structure also has a quasi-regular one \cite{R}. So from the point of view of whether $M$ admits a Sasakian 
structure, we can assume that it is a Seifert circle bundle over a cyclic K\"ahler orbifold. The Sasakian structure
is \emph{regular} if $X$ is a K\"ahler manifold (no isotropy locus), and \emph{semi-regular} if the isotropy locus
has only codimension $2$ strata (maybe intersecting), or equivalently if $X$ has underlying space which is a topological manifold.
In \cite{K} Koll\'ar studies the topology of semi-regular Seifert bundles $M\to X$ when $H_1(M,\ZZ)=0$. Under some
technical conditions, we have 
  \begin{equation}\label{eqn:H2-2}
  H_2(M,\ZZ)=\ZZ^k\oplus (\mathop{\oplus}_i \, \ZZ_{m_i}^{2g_i}),
  \end{equation}
where $H_1(X,\ZZ)=0$, $H_2(X,\ZZ)=\ZZ^{k+1}$, the isotropy locus are complex curves $D_i$ of genus $g(D_i)= g_i$,
with isotropy coefficients $m_i$ such that $\gcd(m_i,m_j)=1$ when $D_i,D_j$ intersect, and $[D_i]$ are linearly independent in homology when the 
coefficients are not coprime. This is used in \cite{K} to obtain Seifert circle bundles which cannot be Sasakian for $k=0$.

In the case of a K-contact manifold, the situation is analogous, with the difference that 
the transversal structure is almost-K\"ahler.
We define regular, quasi-regular and semi-regular K-contact structures
with the same conditions. Any K-contact manifold admits a quasi-regular K-contact structure  \cite{MT}, and
hence a K-contact manifold is a Seifert circle bundle over a cyclic symplectic orbifold. Such orbifold has isotropy locus
which are a (stratified) collection of symplectic suborbifolds. 
The homology (\ref{eqn:H2-2}) tells us interesting geometric facts: we can read the genus of the isotropy surfaces $D_i$
as long as $g_i>0$, and also that they are disjoint when $m_i$ are not coprime but different. As $[D_i]$ are
linearly independent in homology, we cannot have too many disjoint surfaces, and the hardest possible situation
is when there are $k+1=b_2(X)$ disjoint surfaces $D_i$ (taking all $m_i$ not pairwise coprime).
This is used in \cite{MRT} to construct a K-contact $5$-manifold $M$ which cannot be Sasakian, specifically for
$k+1=36$, with $H_1(M,\ZZ)=0$ and semi-regular structure. The distinctive geometric property is that there are 
symplectic $4$-manifolds $X$ with $b_2(X)$ disjoint symplectic surfaces of positive genus (and independent in homology), 
whereas this seems to be difficult (conjecturally impossible) for algebraic surfaces and complex curves (with the
exception of fake projective planes, which have $b_2=1$). 

In \cite{CMRV} there is a second construction of a symplectic $4$-manifold with $b_1=0$ and $b_2=k+1=12$, and
with $12$ disjoint symplectic surfaces of positive genus and independent in homology. In this case the 
corresponding $5$-manifold $M$ has trivial fundamental group, and therefore it is a Smale-Barden manifold
which admits a semi-regular K-contact structure but not a semi-regular Sasakian structure. Both
examples \cite{CMRV,MRT} have surfaces of small genus $g_i\in \{1,2,3\}$, which are cases where we can control
the impossibility of having that many disjoint complex curves in an algebraic surface. Noticeably, 
there is always at least a surface of genus $3$, and it seems difficult to lower the genus of the surfaces.

Another consequence of (\ref{eqn:H2-2}) is that Seifert circle bundles $M\to X$ over cyclic orbifolds satisfy the 
{\it G-K condition}, which means that, in terms of the alternative expression (\ref{eqn:H2-1}):
 \begin{itemize}
 \item for every prime $p$, $t(p)=\#\{i\, | \, c(p^i)>0\}\leq k+1$,
 \item  $i(M)\in\{0,\infty\}$; if $i(M)=\infty$ ($M$ non-spin), then $t(2) \leq k$.
 \end{itemize}
The calculation of the second Stiefel-Whitney class appears in \cite{K,MT2}.
In \cite[Question 10.2.1]{BG} it is asked whether a Smale-Barden manifold which 
satisfies the G-K conditon admits a Sasakian structure.
Write
 $$
 \mathbf{t}(M)=\max\{ t(p)|\,p \text{  prime}\} \leq k+1.
 $$
The difficulty to obtain examples increase as we go to the upper bound, since we always can discard surfaces from the
isotropy locus. The examples of \cite{CMRV,MRT} are instances where the upper
bound $\mathbf{t}(M)=k+1$ is achieved.

Note that the case $\mathbf{t} =0$ is that of torsion-free Smale-Barden manifolds, where
we only have regular Sasakian structures, and all G-K manifolds admit Sasakian structures.
The next case is $\mathbf{t}=1$, which is studied in detail in \cite{MT2}. 
All G-K manifolds with $\mathbf{t}=1$ and $k\geq 1$ admit semi-regular Sasakian structures, and
hence the manifolds admitting Sasakian and K-contact structures are the same.
In the borderline case $\mathbf{t}=1,k=0$, the results in \cite{MT2} are only partial
and touch open questions on symplectic $4$-manifold topology.

Write also
 $$
 \mathbf{c}(M)=\max\{ c(p^i)\} =\max \{g_i\}.
 $$
Our previous comments indicate that it is hard to get examples with low $\mathbf{c}(M)$ and $\mathbf{t}(M)=k+1$, $k=b_2(M)$.
One of the purposes of the present paper is to show the following.

\begin{theorem} \label{thm:main}
 There is a simply connected $5$-manifold $M$ admitting a (quasi-regular) K-contact structure with 
 $\mathbf{t}(M)=b_2(M)+1$, $\mathbf{c}(M)=2$. Such $M$ can be chosen spin or non-spin.
\end{theorem}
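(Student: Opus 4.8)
The plan is to reduce the statement to the construction of a suitable symplectic $4$-orbifold and then pass to a Seifert circle bundle. By the structure result (\ref{eqn:H2-2}) for semi-regular Seifert bundles $\pi:M\to X$, in which $k=b_2(M)$ and $b_2(X)=k+1$, it suffices to produce a cyclic symplectic $4$-orbifold $X$ with $H_1(X,\ZZ)=0$, $b_2(X)=k+1$, whose isotropy locus consists of $k+1$ disjoint symplectic surfaces $D_1,\dots,D_{k+1}$ of genus $g_i\in\{1,2\}$ whose classes $[D_i]$ form a basis of $H_2(X,\QQ)$, with $\max_i g_i=2$. Assigning to $D_i$ isotropy multiplicities $m_i=p^{a_i}$ for a fixed prime $p$ and pairwise distinct exponents $a_i\geq 1$ (the $m_i$ are then pairwise non-coprime but distinct, which is admissible because the $D_i$ are pairwise disjoint and the $[D_i]$ are linearly independent), formula (\ref{eqn:H2-2}) produces a torsion subgroup with $t(p)=k+1$ distinct factors $\ZZ_{p^{a_i}}$; hence $\mathbf{t}(M)=k+1=b_2(M)+1$, while $\max_i g_i=2$ gives $\mathbf{c}(M)=2$.

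The orbifold $X$ I would build using the Gompf fiber connected sum for symplectic orbifolds developed in the earlier sections. The building blocks are simply connected symplectic $4$-manifolds carrying disjoint symplectic surfaces of the two desired genera: genus-$1$ surfaces as fibers (and multiple fibers) of elliptic surfaces $E(n)$, and genus-$2$ surfaces coming from a simply connected genus-$2$ symplectic fibration or from a double branched cover of a rational surface. One glues successive blocks along auxiliary symplectic surfaces whose normal Euler numbers are opposite, so that the fiber sum is symplectic and the surfaces $D_i$ lying away from the gluing loci survive and stay disjoint. At each sum I would track the effect on $H_*$ by Mayer--Vietoris and on $\pi_1$ by van Kampen, choosing the gluing data so that $H_1=0$ is maintained and the number of independent disjoint surfaces grows by the intended amount, until the $D_i$ span $H_2(X,\QQ)$; the cyclic orbifold points are introduced precisely along the $D_i$ with the multiplicities $m_i$.

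With $X$ assembled, I would construct the Seifert circle bundle $\pi:M\to X$ with Euler class a positive rational multiple of the symplectic class, which carries a K-contact structure induced by the almost-K\"ahler transverse structure of $X$ in the standard way. Simple connectivity of $M$ follows from the homotopy exact sequence of the Seifert fibration together with $\pi_1^{\mathrm{orb}}(X)$ being generated by the meridians of the $D_i$, which are killed in $M$ for a suitable choice of Seifert invariants. The spin versus non-spin alternative is controlled by the computation of $w_2(M)$ (equivalently the Barden invariant $i(M)\in\{0,\infty\}$) recalled in the introduction: by adjusting the parity of the Euler/Seifert data and the self-intersections of the $D_i$ one arranges either $i(M)=0$ (spin) or $i(M)=\infty$ (non-spin), keeping the prime $p$ realizing $\mathbf{t}(M)=k+1$ odd in the non-spin case so that the G-K constraint $t(2)\leq k$ is respected.

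I expect the principal obstacle to be the joint constraint of lowering the maximal genus to $2$ while still producing $k+1=b_2(X)$ \emph{disjoint} symplectic surfaces that form a rational basis of $H_2(X)$ and preserve $H_1(X,\ZZ)=0$. In the previous examples a genus-$3$ surface was unavoidable, so the difficulty is to orchestrate the Gompf sums so that every gluing surface used to build $X$ can be taken of genus at most $2$, without either creating homology that the $D_i$ fail to span or introducing $\pi_1$ that obstructs simple connectivity of $M$. Making the homological bookkeeping and the fundamental-group computations close simultaneously is the crux of the argument.
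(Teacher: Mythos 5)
Your reduction of the theorem to the existence of a cyclic symplectic $4$-orbifold $X$ with $b_1=0$ carrying $b_2(X)$ disjoint symplectic surfaces of genus $1$ and $2$ that span $H_2(X,\QQ)$, followed by the choice of multiplicities $m_i=p^{a_i}$ and the passage to a Seifert circle bundle with K-contact structure, is exactly the paper's strategy, and the supporting machinery you invoke (the homology formula for the Seifert bundle, the K-contact structure induced from the almost-K\"ahler orbifold, the $w_2$ computation) is all present there. But the proof of the theorem essentially \emph{is} the construction of that orbifold, and this is precisely the step you leave open: you propose to obtain $X$ by ordinary Gompf fiber sums of smooth simply connected blocks (elliptic surfaces $E(n)$, genus-$2$ fibrations, branched covers), and then concede that orchestrating these sums so that the maximal genus drops to $2$ while keeping $b_2(X)$ disjoint independent surfaces is ``the crux.'' That crux is not bookkeeping: with smooth simply connected building blocks you are in exactly the setting of the earlier constructions, where a genus-$3$ surface was unavoidable, and nothing in your proposal removes it. (Note also that the fiber and the multiple fibers of $E(n)$ are all rationally proportional in $H_2$, so they cannot supply many independent disjoint classes.)

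The paper's way out is qualitatively different and is the actual content of the proof. It develops the Gompf fiber sum \emph{in the orbifold category} precisely so that one can glue along a surface passing through isolated singular points: the blocks are $Y=(\S_2\x\S_1)/\ZZ_2$ (not simply connected, with eight ordinary double points) and a singular rational surface $W$ obtained from $\CP^2$ by blow-ups and blow-downs of $(-2)$-curves, and the gluing curve $T_1\cong C$ carries two orbifold points of order $2$. The $\ZZ_2$-quotient is what lowers the genus (the fibers $\S_1$ become spheres $S_i$ and the fibers $\S_2$ become tori $T_j$) while the singular points keep the intersection pattern consistent; but it also forces the resulting Seifert bundles to be quasi-regular rather than semi-regular, so the formula (\ref{eqn:H2-2}) you quote cannot be applied as stated. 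The paper must extend Koll\'ar's homology computation to this setting (Theorem \ref{thm:16MRT}), redo the $w_2$ analysis on $X-P$ (Proposition \ref{prop:w2}), and carry out a nontrivial $\pi_1^{\orb}$ computation (Lemma \ref{lem:44}) to kill the order-$2$ meridians coming from the double points. None of these steps appears in your proposal, so the argument has a genuine gap at its center.
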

 
The construction of Theorem \ref{thm:main} comes down to the construction of a suitable symplectic cyclic $4$-orbifold
with many disjoint symplectic surfaces.
 
\begin{theorem} \label{thm:main2}
 There is a symplectic cyclic $4$-orbifold $X$ with $b_1=0$, $b_2=16$, thirteen symplectic surfaces of genus $1$ and 
 three of genus $2$, which are disjoint and generate the homology $H_2(X,\QQ)$. 
 \end{theorem}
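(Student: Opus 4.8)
The plan is to realize $X$ as an iterated symplectic fiber sum of elementary symplectic orbifold blocks, using the orbifold Gompf connected sum constructed above, so that the sixteen required surfaces sit inside the blocks, away from the gluing loci, and survive into $X$ as disjoint symplectic suborbifolds. First I would fix the building blocks. For the genus-one surfaces I would take simple symplectic $4$-manifolds (or cyclic orbifolds) carrying an embedded symplectic torus $F$ of square zero together with an auxiliary square-zero symplectic torus $T$ disjoint from $F$; a ruled surface over a torus, the product $S^2\times T^2$, or a small finite quotient thereof are natural candidates. For the genus-two surfaces I would use an analogous block carrying an embedded symplectic $\Sigma_2$ of controlled self-intersection together with an auxiliary torus for gluing. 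Since the construction must feed into a cyclic Seifert bundle, I would ensure every block has only cyclic isotropy, arranged along codimension-$2$ strata, so that the fiber sums produce a cyclic symplectic orbifold.

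Next I would glue. The orbifold Gompf sum matches the genus and reverses the self-intersection of the two summing surfaces, so I would make the auxiliary tori square zero, pair them off (or pair them against square-zero tori in linking pieces), and perform the sums one at a time. Throughout, the target surfaces are chosen disjoint from every auxiliary torus, hence they stay embedded, symplectic and mutually disjoint in the result. The two numerical goals $b_2=16$ and $b_1=0$ I would verify by Mayer--Vietoris: each fiber sum along a torus $T$ deletes $T\times D^2$ from each side and reglues along $T\times S^1$, and a careful count of which classes are created (the rim tori) and which die lets one steer the total $b_2$ to $16$. The condition $b_1=0$ is where the gluing must be orchestrated so that the loops coming from $\pi_1$ of the summing tori are killed, for instance by arranging the pieces in a cyclic pattern or by choosing blocks in which the relevant meridians and longitudes bound; using blocks that already have $b_1=0$ makes this more manageable.

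Finally I would check that the sixteen surfaces span $H_2(X,\QQ)$. As $\dim H_2(X,\QQ)=b_2=16$ equals the number of surfaces, spanning is equivalent to linear independence, so it suffices to exhibit full rank $16$ in the intersection data (the self-intersections together with the pairings against the fiber and rim classes created by the sums). The main obstacle I anticipate is precisely this simultaneous bookkeeping: tuning the blocks and the gluings so that $b_2$ lands on $16$, $b_1$ collapses to $0$, the isotropy stays cyclic, and no step ever forces a summing surface of genus $\ge 3$ --- the last being the whole point of the improvement to $\mathbf{c}(M)=2$. Controlling $\pi_1$, and hence $b_1$, under fiber sums along positive-genus surfaces, where $\pi_1$ of the gluing torus maps in nontrivially, is the delicate part and is what dictates the precise choice and number of blocks.
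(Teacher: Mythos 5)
Your overall strategy (iterated orbifold Gompf sums of explicit blocks, followed by a homological count) belongs to the right family, but it contains a step that cannot work as stated: you require all sixteen surfaces to be disjoint from every gluing locus and simultaneously to span $H_2(X,\QQ)$. These two demands are incompatible. Each summing surface $N$ is symplectic, and you arrange $N^2=0$, so its normal bundle is trivial and a parallel copy of $N$ survives in the neck of the fiber sum as a closed symplectic surface; its class is nonzero in $H_2(X,\QQ)$ because it pairs positively with $[\omega]$. A surface kept away from the neck has zero algebraic intersection with this class, so if your sixteen surfaces spanned $H_2(X,\QQ)$ the rim class would be orthogonal to all of $H_2(X,\QQ)$, contradicting the nondegeneracy of the rational intersection form on a closed oriented $4$-orbifold. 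This is precisely why, in the paper, the distinguished surfaces are \emph{not} kept away from the gluing locus: they are built as internal connected sums of pieces from the two blocks that meet the summing surface transversely and are joined across the neck, namely $V_i=E_i\# T_{p_i}$, $V_1=A_1\#(S_1\cup T_{p_1'}\cup T_{p_1''})$, $V_2=A_2\#(S_2\cup T_{p_2'})$, as in (\ref{eqn:V12-def}).

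Beyond this, two ideas you would need are not supplied. First, a mechanism to kill $b_1$: the blocks you name ($S^2\times T^2$, ruled surfaces over a torus) have $b_1=2$, and summing them along tori does not by itself collapse $\pi_1$; the paper glues the orbifold torus $T_1\subset Y=(\S_2\x\S_1)/\la\s\x\t\ra$ to a cubic $C\subset W$ chosen near a cuspidal curve, so that the generators of $\pi_1(T_1)$ bound vanishing discs in $W-(A_1\cup A_2)$ (Proposition \ref{prop:24}) and the meridian of $C$ dies on an exceptional sphere. Second, a way to manufacture the genus-$2$ surfaces disjointly from the genus-$1$ ones: the paper does not import them from a separate block but resolves two transversely intersecting symplectic tori into a genus-$2$ surface and blows up the intersection point (Lemma \ref{lem:4}), which is what keeps $\mathbf{c}=2$ while preserving independence in homology. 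As written, your proposal defers exactly the bookkeeping where these difficulties live, and the spanning step fails for the structural reason above.
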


To construct the orbifold of Theorem \ref{thm:main2}, we will develop in Sections \ref{sec:orbifolds}-\ref{sec:3}
the technique introduced by Gompf \cite{Gompf}
for fiber connected sum of symplectic manifolds along codimension $2$ symplectic submanifolds, in the orbifold setting.
This technique will be useful by its own in future constructions of symplectic orbifolds.
In Sections \ref{sec:4}-\ref{sec:5} we do the construction of the symplectic $4$-orbifold of Theorem \ref{thm:main2}.
Then we have to extend the theory of \cite{K,MRT} for semi-regular Seifert bundles to the case of quasi-regular Seifert bundles,
in Section \ref{sec:k-contact}. In Section \ref{sec:w2} we compute the second Stiefel-Whitney class extending the
arguments of \cite{MT} to the quasi-regular setting. Finally, we compute the orbifold fundamental group of our
symplectic $4$-orbifold in Section \ref{sec:pi1}.

\subsection*{Acknowledgements} 
The author is grateful to Jaume Amor\'os, Denis Auroux, Alejandro Ca\~nas, Paul Seidel and Alex Tralle for very useful comments.
Partially supported by Project MINECO (Spain) PGC2018-095448-B-I00.

\section{Orbifolds}\label{sec:orbifolds}

\subsection{Orbifolds}
Let us start by  collecting  some results about orbifolds from \cite{BFMT, BG}.
Let $X$ be a topological space, and fix an integer $n > 0$. 
An {\em orbifold chart} $(U, {\tilde U}, \Gamma, \varphi)$ at $x\in X$ 
consists of an open set $U  \subset  X$ with $x\in U$, a connected and open set
${\tilde U}  \subset  \RR^n$, a finite group $\Gamma\subset \GL(n)$ acting smoothly and effectively 
on $\tilde{U}$ fixing $0$, and a continuous map $\varphi \colon \tilde{U} \to U$, with $\varphi(0)=x$, which is  $\Gamma$-invariant 
(that is $\varphi = \varphi\circ\gamma$, for all $\gamma \in\Gamma$), that induces a homeomorphism 
$\tilde U/\G  \stackrel{\cong}{\too} U$.

\begin{definition}\label{def:orbifold}
An orbifold $X$, of dimension $n$, is a Hausdorff, paracompact topological 
 space endowed with an {\em orbifold atlas} $\cA=\{(U_i, {\tilde U}_i, \Gamma_i, \varphi_i)\}$ of orbifold charts
which satisfy the following conditions:
 \begin{enumerate}
 \item[i)] $\{U_i\}$ is an open cover of $X$;
 \item[ii)] If $(U_i, {\tilde U}_i, \Gamma_i, \varphi_i)$ and $(U_j, {\tilde U}_j, \Gamma_j, \varphi_j)$
 are two orbifold charts, with $U_i \cap U_j \not=\emptyset$, then for each point $p \in U_i \cap U_j$ there exists
an orbifold chart $(U_k, {\tilde U}_k, \Gamma_k, \varphi_k)$ at $p$ such that $U_k \subset U_i \cap U_j$;
 \item[iii)] If $(U_i, {\tilde U}_i, \Gamma_i, \varphi_i)$ and $(U_j, {\tilde U}_j, \Gamma_j, \varphi_j)$
 are two orbifold charts at $p\in X$, with $U_i \subset U_j$, then there exist a smooth embedding $\rho_{ij} \colon {\tilde U}_i \to {\tilde U}_j$
 such that $\varphi_i = \varphi_j \circ \rho_{ij}$.
 \end{enumerate}
 \end{definition}
 
As smooth actions of finite groups are locally linearizable, any
orbifold has an atlas consisting of {linear charts}, that is charts where  $\Gamma_i$ acts on $\RR^n$ via an orthogonal representation
$\Gamma_i < \OO(n)$.

For any point $x\in X$, take an orbifold chart $(U,\tilde U,\Gamma,\varphi)$ with $\varphi(0)=x$. Then
we call $\Gamma$ the \emph{isotropy group} at $x$, and we denote it by $\Gamma_x$ and write
$m(x)=|\G_x|$, which is called isotropy coefficient or multiplicity of $x$. 
We call $x \in X$ a \emph{regular} point if the isotropy group $\G_x=\{ 1 \}$ is trivial
(that is $m(x)=1$), and an \emph{isotropy} point if it is not regular.
We call $x\in X$ a cyclic isotropy point if $\G_x$ is a cyclic group (that is $\G_x=\ZZ_{m(x)}$), and $X$ is
a \emph{cyclic orbifold} if all isotropy groups are cyclic. 
We call $x\in X$ a \emph{smooth} point if a neighbourhood of $x$ is 
homeomorphic to a ball in $\RR^n$, and singular otherwise. 
Clearly a regular point is smooth, but not conversely. 

An orbifold $X$, with atlas $\{(U_i, {\tilde U}_i, \Gamma_i, \varphi_i)\}$, 
 is {\em oriented} if each $ {\tilde U}_i$ is oriented, the action of $\Gamma_i$ is 
 orientation-preserving, and all the change of charts 
 $\rho_{ij} \colon {\tilde U}_i \to {\tilde U}_j$ 
 are orientation-preserving. In this case, we can arrange that $\Gamma_i <\SO(n)$.

\begin{definition}[\cite{BG}] \label{def:orbimap}
Let $X$ and $Y$ be two orbifolds with atlas
$\{(U_i, {\tilde U}_i, \Gamma_i, \varphi_i)\}$ and $\{(V_j, {\tilde V}_j, \Upsilon_j, \psi_j)\}$, respectively.
A map $f \colon X \to Y$ is said to be an {\em orbifold map} 
if $f$ is a continuous map between the underlying topological spaces, and 
for every point $p\in X$ there are orbifold charts $(U_i, {\tilde U}_i, \Gamma_i, \varphi_i)$ at $p$ and
$(V_i, {\tilde V}_i, \Upsilon_i, \psi_i)$ at $f(p)$, with $f(U_i)\subset V_i$, a 
differentiable map ${\tilde f}_i \colon {\tilde U}_i \to {\tilde V}_i$, with $\tilde f(0)=0$, and a
homomorphism $\varpi_i:\Gamma_i\to \Upsilon_i$ such that ${\tilde f}_i \circ \gamma=\varpi_i(\gamma)\circ
{\tilde f}_i$ for all $\gamma\in \Gamma_i$, and 
$f_{| U_{i}}\circ\varphi_i=\psi_i\circ{\tilde f}_i$.
 Moreover, if $\rho_{ij} \colon {\tilde U}_i \to {\tilde U}_j$ is a change of charts
for $p$, then there is a change of charts $\mu(\rho_{ij}) \colon {\tilde V}_i \to {\tilde V}_j$ for $f(p)$ such that
${\tilde f}_j\circ\rho_{ij} = \mu(\rho_{ij})\circ{\tilde f}_i$, and $\mu(\rho_{ij}\circ \rho_{ki}) =  \mu(\rho_{ij})\circ \mu(\rho_{ki})$
for changes of charts $\rho_{ki} \colon {\tilde U}_k \to {\tilde U}_i$ 
and $\rho_{ij} \colon {\tilde U}_i \to {\tilde U}_j$.
\end{definition}

The composition of orbifold maps is an orbifold map. Two orbifolds $X$ and $Y$ are said to be {\em diffeomorphic} if there exist orbifold maps
$f \colon X \to Y$ and $g \colon Y \to X$ such that
$g\circ f=1_{X}$ and $f\circ g=1_{Y}$, where $1_{X}$ and $1_{Y}$ are the respective identity maps.
Equivalently, an \emph{orbifold diffeomorphism} $f:X\to Y$ is an orbifold map such that is a homeomorphism on the underlying topological 
spaces, the maps $\tilde f_i$ are diffeomorphims, and the maps $\varpi_i$ are isomorphisms. An orbifold diffeomorphism
is orientation preserving (resp.\ reversing) if the maps $\tilde f_i$ preserve (resp.\ reverse) the orientation.

Considering  $\RR$ as an orbifold, we can define {\em orbifold functions} on an orbifold $X$ as 
orbifold maps $f \colon X \to \RR$. We denote $C^\infty_{\orb}(X)$ the set of orbifolds functions on $X$.
An \emph{orbifold partition of unity} subordinated to a locally finite cover $\{U_\a\}$ of $X$ is a collection of orbifold functions
$\{\rho_\a\}$ with $\rho_\a \geq 0$, $\sum \rho_\a \equiv 1$ and the support of $\rho_\a$ lies inside $U_\a$ for
all $\a$. By \cite[Proposition 5]{MR}, orbifold partitions of unity always exist.

\subsection{Orbivector bundles}

Now fix a finite-dimensional vector space $\RR^m$ and a finite
group $F<\OO(m)$. We call \emph{orbivector space} to the quotient $V=\RR^m/F$. 

\begin{definition}\label{def:orbibundle}
Let $X$ be a smooth orbifold, of dimension $n$, and let $\{(U_i, {\tilde U}_i, \Gamma_i, \varphi_i)\}$ be
 an atlas for $X$. An {\em orbivector bundle} over $X$ with fiber $\RR^m/F$ consists of a smooth orbifold $E$, of dimension $m+n$, and 
an orbifold map $\pi \colon  E \too  X$, 
called {\em projection}, satisfying the following conditions:
\begin{enumerate}
\item[i)] For every orbifold chart $(U_i, {\tilde U}_i, \Gamma_i, \varphi_i)$ on $X$, there exists 
an orbifold chart  $(V_i, {\tilde V}_i, \Upsilon_i, \Psi_i)$ on $E$, such that
$V_i = \pi^{-1}(U_i)$, ${\tilde V}_i={\tilde U}_i \x \RR^m$.
The action of $\Upsilon_i$ on ${\tilde U}_i  \x \RR^m$ is diagonal, that is
$\Upsilon_i < \OO(n)\x \OO(m) \subset \OO(n+m)$, $F=\Upsilon_i \cap \OO(m)$ and
$\Gamma_i=\pr_1 (\Upsilon_i)$, where $\pr_1 :\OO(n)\x \OO(m)\to \OO(n)$ is the first projection. In particular,
there is an exact sequence 
 $$
 0 \to F \to \Upsilon_i \stackrel{\pr_1}{\to} \Gamma_i \to 0.
 $$
The map
$\Psi_i \colon  {\tilde V}_i={\tilde U}_i \x \RR^m \too  V_i=E_{|U_{i}} =  \pi^{-1}(U_i)$ 
satisfies $\pi_{|V_i} \circ \Psi_i = \varphi_i \circ \pr_1$.

\item[ii)] If $(U_i, {\tilde U}_i, \Gamma_i, \varphi_i)$ and $(U_j, {\tilde U}_j, \Gamma_j, \varphi_j)$ are
two orbifold charts on $X$, with $U_i \subset U_j$, and $\rho_{ij} \colon {\tilde U}_i \to {\tilde U}_j$
is a change of charts, then there exists a differentiable map, called {\em transition map}
$g_{ij} \colon  {\tilde U}_i  \too  \GL(m)$,
and a change of charts
$\lambda_{ij} \colon {\tilde V}_i={\tilde U}_i \x \RR^m \to {\tilde V}_j={\tilde U}_j \x \RR^m$ of $E$, such that
 $$
 \lambda_{ij}(x, y) = \big(\rho_{ij}(x), g_{ij}(x)(y)\big),
 $$
for all $(x, y)\in{\tilde U}_i \x \RR^m$.
\end{enumerate}
\end{definition}

Note that if $\pi \colon  E \too  X$ is an orbivector bundle, and $x\in X$, then the {\em fiber} $\pi^{-1}(x)$
is isomorphic to $\RR^m/F_x$, where $F_x=\pr_2(\Upsilon_i)$, where
${\pr_2}:\OO(n)\x \OO(m)\to \OO(m)$ is the second projection. In particular, at a regular point 
$\Gamma_x=\{1\}$, $F_x=F$ and $\pi^{-1}(x) \cong \RR^m/F$. 

A \emph{metric} on an orbivector bundle $\pi:E\to X$ is an orbifold scalar product on every fiber $\pi^{-1}(x)=\RR^m/F_x$,
that is a scalar product on $\RR^m$ which is $F_x$-invariant, varying smoothly. For every chart as in Definition \ref{def:orbibundle}(i),
we have on $\tilde V_i=\tilde U_i\x\RR^m$ a scalar product $a(x)=\sum a_{ij}(x)y_iy_j$ on $\RR^m$, depending on $x\in \tilde U_i$, which is
$\Upsilon_i$-invariant. Using partitions of unity as in \cite[Proposition 6]{MR}, we can see that there is a metric on any
orbivector bundle. 

If $a$ is a metric on the orbivector bundle $E\to X$, we say that a chart $(V, {\tilde V}, \Upsilon, \Psi)$ is \emph{orthogonal}
if $a=\sum y_i^2$ is the standard scalar product in the chart. They always exist: just take a chart $(V, {\tilde V}, \Upsilon, \Psi)$ 
and write $a=\sum a_{ij}(x)y_iy_j$. We take the standard basis and apply the Gram-Schmidt process
to obtain an orthonormal basis $e(x)=\{e_1(x),\ldots, e_m(x)\}$. As $a$ is $\Upsilon$-invariant, the map $E:\tilde U\x \RR^m \to
\tilde U\x \RR^m$, $(x,y)\mapsto (x,e(x)(y))$ is $\Upsilon$-equivariant. The orbifold chart $(V, {\tilde V}, \Upsilon, \Psi\circ E)$ is orthogonal.
When we use an atlas consisting of orthogonal charts, the transition maps are $g_{ij} \colon  {\tilde U}_i  \too  \OO(m)$,

An oriented orbivector space $V=\RR^m/F$ consists of a vector space $\RR^m$ with an orientation,
and $F<\SO(m)$. An \emph{oriented orbivector bundle} is an orbivector bundle with oriented orbivector
space as fiber, and such that the transition maps are orientation preserving, that is $\det(g_{ij}(x))>0$ for
all $x\in \tilde U_i$. 
If $X$ is an oriented orbifold and $\pi:E\to X$ is an oriented orbivector bundle, then $E$ is an oriented
orbifold and $\Upsilon_i<\SO(n)\x \SO(m)$.

If $\pi:E\to X$, then the \emph{reverse oriented orbivector bundle}, denoted by  $\overline{E}$, is the
same orbivector bundle, endowed with the opposite orientation of the fiber orbivector space. 

Two orbivector bundles $\pi_1:E_1\to X$, $\pi_2:E_2\to X$ are isomorphic if there is an orbifold
diffeomorphism $f:E_1\to E_2$ such that $\pi_2\circ f =\pi_1$, and for each orbifold chart
$(U_i,\tilde U_i,\G_i,\varphi_i)$ of $X$, there are orbifold charts $(V_i,\tilde V_i,\Upsilon_i,\Phi_i)$
and $(W_i,\tilde W_i,\Theta_i,\Psi_i)$ of $E_1,E_2$, respectively, such that the orbifold
lift $\tilde f_i:\tilde V_i=\tilde U_i\x
\RR^m\to \tilde W_i=\tilde U_i\x \RR^m$, with $\Psi_i\circ \tilde f_i=f\circ \Phi_i$, is
of the form $\tilde f_i(x,u)=(x, \hat f_i(x)(u))$, where $\hat f_i:\tilde U_i\to \GL(m)$. Moreover, the
isomorphism $\varpi_i:\Upsilon_i \to \Theta_i$ sits in an exact sequence
 $$
    \begin{array}{ccccc} 
    0\to & F & \to \,  \Upsilon_i \, \to & \G_i & \to 0 \\ 
    & \downarrow \cong\!\!\! & \quad \downarrow \varpi_i & || \\
    0\to & F & \to \, \Theta_i \, \to & \G_i & \to 0 
    \end{array}
$$
The isomorphism $f$ is orientation preserving if $\det(\hat f_i(x))>0$ for all $x\in \tilde U_i$.
It is orientation reversing if $f:E_1\to \overline E_2$ is orientation preserving.

Finally, let $f:X' \to X$ be an orbifold diffeomorphism, and $\pi:E\to X$ be an orbivector bundle.
We define the pull-back $E'=f^*E$ by taking the total space as the usual pull-back
$E'=\{(x',v)\in X'\x E \, |\, f(x')=\pi(v)\}$, and the charts of $E'$ as follows. Take 
 orbifold charts $(U_i, {\tilde U}_i, \Gamma_i, \varphi_i)$ of $X'$ and
$(V_i, {\tilde V}_i, \Delta_i, \psi_i)$ of $X$, with $f(U_i)\subset V_i$, with lift
${\tilde f}_i \colon {\tilde U}_i \to {\tilde V}_i$, and an
isomorphism $\varpi_i:\Gamma_i\to \Delta_i$ as in Definition \ref{def:orbimap}. Let $(W_i,\tilde W_i,\Upsilon_i,\Phi_i)$
be an orbifold chart for $E$ with $\tilde W_i=\tilde V_i\x \RR^m$ and 
exact sequence $0\to F\to \Upsilon_i\to \Delta_i \to 0$, where $\Upsilon_i<\OO(n)\x \OO(m)$. Then
we define an orbifold chart for $E'=f^*E$ as 
$(Z_i,\tilde Z_i,\Theta_i,\Psi_i)$ with $\tilde Z_i=\tilde U_i\x \RR^m$ and 
$\Theta_i<\OO(n)\x \OO(m)$ defined by the pull-back exact sequence 
 $$
    \begin{array}{rcl} 
    0\to  F & \to\Theta_i  \to & \G_i  \to 0 \\ 
    || & \downarrow & \downarrow \varpi_i \\
    0\to  F & \to \Upsilon_i \to & \Delta_i  \to 0 
    \end{array}
$$

An \emph{orbifold vector bundle} over an orbifold is the case of an orbivector bundle with 
$F=\{1\}$ and fiber $V=\RR^m$ (cf.\ \cite[Definition 3.5]{BBFMT}).
The {\em orbifold tangent bundle} $TX$ of an orbifold $X$ is defined as follows.
For each orbifold chart $(U_i, {\tilde U}_i, \Gamma_i, \varphi_i)$
of $X$, we consider the tangent bundle $T{\tilde U}_i \cong{\tilde U}_i \x \RR^n$  over ${\tilde U}_i$.
Take $\rho_{i} \colon  \Gamma_{i}  \too  \GL(n)$ the homomorphism given by the action of $\Gamma_{i}$ on ${\RR}^n$. Then 
$(TX_{| U_{i}}, {\tilde U}_{i} \x \RR^n, \Gamma_i, \Psi_{i})$ is an orbifold chart for $TX$, where
$TX_{| U_{i}}  = T{\tilde U}_i/\G_i$, and $\Psi_i$ is the quotient map.
If $\rho_{ij} \colon {\tilde U}_i \to {\tilde U}_j$ is a change of charts for $X$, 
the transition map $g_{ij} \colon  {\tilde U}_i  \too  \GL(n)$ for $TX$ is given by the Jacobian matrix 
of $\rho_{ij}$. The orbifold cotangent bundle $T^* X$ and the orbifold tensor bundles are
constructed similarly. Thus, one can consider Riemannian metrics, almost complex structures, 
orbifold forms, etc. 

\begin{definition}\label{def:orbisection}
A {\em section} of an orbifold vector bundle $\pi\colon E \too  X$ is an
orbifold map $s \colon X \too  E$ such that $\pi \circ s = 1_{X}$. Therefore, if $\{(U_i, {\tilde U}_i, \Gamma_i, \varphi_i)\}$
is an atlas on $X$, then $s$ 
consists of a family of smooth maps $\{s_{i} \colon {\tilde U}_i \too  \RR^m\}$, 
such that every  $s_{i}$ is $\Gamma_i$-equivariant and compatible with the changes of charts on $X$.
\end{definition}

An (orbifold)  {\em Riemannian metric} $g$ on $X$ 
 is a positive definite symmetric tensor in $T^* X\ox T^*X$. This is equivalent to have, for each orbifold chart  
 $(U_i, {\tilde U}_i, \Gamma_i, \varphi_i)$ on $X$, a Riemannian metric 
 $g_{i}$ on the open set ${\tilde U}_i$ that is invariant under the action of $\Gamma_i$ on ${\tilde U}_i$ ($\Gamma_i$ acts on ${\tilde U}_i$
by isometries), and the change of charts $\rho_{ij} \colon {\tilde U}_i \to {\tilde U}_j$
are isometries. An (orbifold)  {\em almost complex structure} $J$ on $X$ 
 is an endomorphism $J \colon TX \to TX$ such that $J^2 = -\Id$. Thus, $J$ is determined 
 by an almost complex structure $J_i$ on ${\tilde U}_i$, for every orbifold chart  
 $(U_i, {\tilde U}_i, \Gamma_i, \varphi_i)$ on $X$, such that the action of $\Gamma_i$ on ${\tilde U}_i$
 is by biholomorphic maps, and any change of charts  $\rho_{ij} \colon {\tilde U}_i \to {\tilde U}_j$
 is a holomorphic embedding.
An {\em orbifold $p$-form} $\alpha$ on $X$ is a section of $\bigwedge^p T^* X$.
This means that, for each orbifold chart  
 $(U_i, {\tilde U}_i, \Gamma_i, \varphi_i)$ on $X$, we have
 a differential $p$-form $\alpha_i$ on the open set ${\tilde U}_i$, such that every 
 $\alpha_i$ is $\Gamma_i$-invariant (i.e. $\gamma^{*}(\alpha_i)= \alpha_i$, for  $\gamma\in\Gamma_i$),
 and any change of charts  $\rho_{ij} \colon {\tilde U}_i \to {\tilde U}_j$
 satisfies  $\rho^{*}_{ij}(\alpha_j)=\alpha_i$.
 The space of $p$-forms on $X$ is denoted by
$\Omega_{\orb}^{p}(X)$.

\subsection{Suborbifolds}
There are different notions of sub-objects in the orbifold category \cite{BB,Wei}, each of them
suitable for a different situation. We will use the following:

\begin{definition} \label{def:suborb}
Let $Z$ be an $n$-dimensional orbifold. A \emph{suborbifold} $X$ of $Z$ is a $p$-dimensional  orbifold such
that the underlying spaces $X\subset Z$, and at every point of $X$, we have a chart
$( U,\tilde U,\G,\varphi)$ of $Z$, where $\tilde U\subset \RR^n$, and a chart
$( U',\tilde U',\G',\varphi')$ of $X$, where $\tilde U' =\tilde U\cap ( \RR^p\x\{0\})$, $U'=U\cap X$, 
$\varphi'=\varphi_{|\tilde U'}$. 

A \emph{normalizable suborbifold} $X\subset Z$ is a suborbifold such that at every $x\in X$ there is an adapted
chart $( U,\tilde U,\G,\varphi)$, where $\tilde U\subset \RR^n$, such that 
$\G< \OO(p)\x \OO(n-p)\subset \OO(n)$, 
and $\G'=\pr_1(\G)<\OO(p)$, where $\pr_1:\OO(p)\x \OO(n-p) \to \OO(p)$ is the projection
on the first factor.
\end{definition}

The chart in Definition \ref{def:suborb} is called an \emph{adapted chart}. A suborbifold satisfies the
following \emph{fullness condition} \cite{Wei}: if $g\in \G$, $x\in \tilde U'$, $g x\in \tilde U'$, then there is some $h\in\G'$ with $h x=g x$.

For a normalizable suborbifold, take an adapted chart $( U,\tilde U,\G,\varphi)$. Then
all elements in $\G$ fix the subspace $\RR^p=\RR^p \x \{0\} \subset \RR^n$ 
since $\G< \OO(p)\x \OO(n-p)$. The elements in $K=\ker (\pr_1)$ 
fix pointwise $\RR^p$. Hence the induced action on $\RR^p$ is given by the group $\G'= \G/K =\pr_1(\G)$. Clearly, the fullness
condition is satisfied.
However, note that the normalizable condition is stronger, since it implies that for any
$g\in \G$, $g \tilde U' =\tilde U'$. Therefore, the preimage of $U'\subset U$ under
$\varphi:\tilde U\to U$ is exactly $\varphi^{-1}(U')=\tilde U'$.

\begin{remark}
An example of a non-normalizable suborbifold is the following. Take $\ZZ_m$ acting on
$\CC^2$ as $\xi\cdot(z_1,z_2)=(\xi z_1, \xi^l z_2)$, where $\xi=e^{2\pi i/m}$, $\gcd(l,m)=1$.
Then take $Z=\CC^2/\ZZ_m$ and $X=\CC$ the image of $z\mapsto (z,az)$, for $a\neq 0$.

Given a suborbifold $X\subset Z$, it might be that $i:X\hookrightarrow Z$ is not an orbifold map.
This happens when we do not have $\G'<\G$ in Definition \ref{def:suborb}. An example is
given by $Z=\CC^2/\ZZ_4$, where $\ZZ_4$ acts via $(z_1,z_2) \mapsto (-z_1, iz_2)$, and $X=(\CC\x\{0\})/\ZZ_2$. 
\end{remark}

\begin{proposition} \label{prop:8}
For a (connected) normalizable suborbifold $X\subset Z$, there is a well-defined \emph{normal orbivector bundle} $\nu_X$.
\end{proposition}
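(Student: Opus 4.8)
The plan is to build $\nu_X$ directly from adapted charts, using the \emph{normal part} of the transition data of $Z$, rather than as a pullback of $TZ$ along the inclusion: as the preceding Remark shows, the inclusion $i\colon X\inc Z$ need not be an orbifold map, so no quotient $i^*TZ/TX$ is available in general, and an intrinsic construction is forced. First I would fix an orbifold Riemannian metric on $Z$ (which exists by the partition-of-unity argument of \cite{MR}) so as to work with orthogonal adapted charts; for such a chart $(U,\tilde U,\G,\varphi)$ of $Z$ at $x\in X$ we have the orthogonal splitting $\RR^n=\RR^p\x\RR^{n-p}$ with $\G<\OO(p)\x\OO(n-p)$ and induced chart $(U',\tilde U',\G',\varphi')$ of $X$, where $\G'=\pr_1(\G)$. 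As the candidate chart of $\nu_X$ over $U'$ I take $\tilde V=\tilde U'\x\RR^{n-p}$, with $\Upsilon=\G$ acting through $\pr_1$ on $\tilde U'$ and through $\pr_2$ on $\RR^{n-p}$; this matches Definition \ref{def:orbibundle}(i) with fiber group $F=\G\cap\OO(n-p)=\ker(\pr_1\colon\G\too\G')$.

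Next I would produce the transition maps. Given nested adapted charts with ambient change of charts $\tilde\rho_{ij}\colon\tilde U_i\too\tilde U_j$, the normalizable hypothesis gives $\varphi_j^{-1}(X)=\tilde U_j'$, whence $\tilde\rho_{ij}(\tilde U_i')\subset\tilde U_j'$ and the restriction $\rho_{ij}'=\tilde\rho_{ij}|_{\tilde U_i'}$ is the induced change of charts of $X$. Because the normal coordinates of $\tilde\rho_{ij}$ vanish identically along $\tilde U_i'$, their tangential derivatives vanish there, so $D\tilde\rho_{ij}$ is block upper triangular along $\tilde U_i'$ for $\RR^n=\RR^p\x\RR^{n-p}$; I define $g_{ij}\colon\tilde U_i'\too\GL(n-p)$ to be its lower-right block and set $\lambda_{ij}(x,v)=(\rho_{ij}'(x),g_{ij}(x)(v))$. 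The cocycle identity $g_{ik}=g_{jk}\cdot g_{ij}$ then follows from the chain rule for $\tilde\rho_{ik}=\tilde\rho_{jk}\circ\tilde\rho_{ij}$ together with block-triangularity (the lower-right blocks multiply), and $\Upsilon_i$-equivariance of $\lambda_{ij}$, together with the compatibility of the exact sequences $0\to F\to\Upsilon_i\to\G_i'\to 0$, comes from differentiating the relations $\tilde\rho_{ij}\circ\g=\varpi_{ij}(\g)\circ\tilde\rho_{ij}$ and passing to lower-right blocks, using that $\g$ acts block-diagonally so its normal block is exactly $\pr_2(\g)$.

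The crucial step, and the one I expect to be the main obstacle, is to see that $\nu_X$ has a \emph{single} well-defined fiber $\RR^{n-p}/F$, i.e.\ that $K_x=\ker(\pr_1\colon\G_x\too\G_x')$ does not jump along $X$. For $x$ and a nearby $x'$ in one adapted chart, with $\tilde x'\in\tilde U'\subset\RR^p$, I would show $K_x=K_{x'}$: an element of $\G_x$ lies in $K_x$ iff it acts trivially on the whole local slice $\tilde U'$, and since this slice is an open piece of the \emph{same} $\RR^p$ through $x'$, triviality on it is equivalent to fixing $\tilde x'$ and acting trivially on the $X$-directions at $x'$, i.e.\ to lying in $K_{x'}$ (here I use the normalizable/fullness conditions to identify the slice through $x'$). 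Thus $K_x$ is locally constant; connectedness of $X$ makes it a single group up to conjugacy, and compatibly choosing orthonormal normal frames turns it into a fixed $F<\OO(n-p)$. The delicate point is controlling the resulting conjugations (the potential monodromy of the identification of $F$ and of its $\pr_2$-action) so that Definition \ref{def:orbibundle} holds on the nose and not merely up to conjugacy.

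Finally I would check well-definedness, i.e.\ independence of the chosen adapted atlas. Passing to a common refinement of two adapted atlases, the two families of normal transition maps differ on overlaps by the lower-right Jacobian blocks of the intermediate ambient changes of charts; these assemble into orbifold lifts of the form $\tilde f_i(x,u)=(x,\hat f_i(x)(u))$ with $\hat f_i\colon\tilde U_i'\too\GL(n-p)$, fitting into the commuting diagram of exact sequences that defines an isomorphism of orbivector bundles in Section \ref{sec:orbifolds}. This exhibits the two constructions as isomorphic, so $\nu_X$ is well-defined.
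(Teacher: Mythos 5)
Your construction is essentially the paper's own proof: you take the adapted charts $\tilde V_i=\tilde U_i'\x\RR^{n-p}$ with the diagonal $\G_i$-action, define the transition maps as the normal-derivative (lower-right Jacobian) block of the ambient change of charts composed with $\rho_{ij}'$, and use linearity plus connectedness of $X$ to see that $F_i=\ker(\pr_1)$ is locally constant and hence a single fiber group $F$. The extra verifications you flag (cocycle identity, equivariance, independence of the adapted atlas, and the conjugacy/monodromy of $F$) are sound and go beyond what the paper spells out, but they do not change the route.
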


\begin{proof}
Consider an atlas of adapted charts $\{( U_i,\tilde U_i,\G_i,\varphi_i)\}$ that covers $X$. For $( U_i,\tilde U_i,\G_i,\varphi_i)$ we consider the
projection $\pr_1:\OO(p)\x \OO(n-p) \to \OO(p)$ and let $\G'_i=\pr_1(\G_i)<\OO(p)$. Then let $F_i=\ker (\pr_1)<\OO(n-p)$ and
the \emph{normal fiber} to be the orbivector space $V=\RR^{n-p}/F_i$. For $\tilde U'_i=\tilde U_i\cap (\RR^p\x\{0\})$,
we consider the chart $\tilde V_i = \tilde U'_i \x \RR^{n-p}$
with the action of $\G_i<\OO(p)\x\OO(n-p)\subset \OO(n)$, $V_i=\tilde V_i/\G_i$, and $\Psi_i:\tilde V_i\to V_i$ the
quotient map. Let us see that these $V_i$ glue together to give an orbivector bundle. For a change of charts $U_i\subset U_j$
with $\rho_{ij}:\tilde U_i\to \tilde U_j$, we write $\rho_{ij}=(\rho_{ij}',\rho_{ij}'')$, where $\rho_{ij}':\tilde U_i'\to \tilde U_j'$.
As $F_i$ consists of the maps that fix pointwise $\RR^p$, it is also equal to the maps that fix pointwise any open subset of $\RR^p$.
Therefore $F_i \cong F_j$ under the homomorphism $\G_i\to \G_j$. This proves in particular, using the connectedness of $X$, that
all $F_i$ are isomorphic, hence we can write $F_i=F$. Also, we have a diagram
 $$
    \begin{array}{rcl} 
    0\to  F_i & \to \, \G_i \, \to & \G_i'  \to 0 \\ 
    || & \downarrow & \downarrow \\
    0\to  F_j & \to \, \G_j\,  \to & \G_j'  \to 0 
    \end{array}
$$
The change of charts for $\nu_X$ is given by $\hat\rho_{ij}=(\rho_{ij}',d\rho_{ij}'') : \tilde V_i= \tilde U_i'\x \RR^{n-p} \to
\tilde V_j= \tilde U_j'\x \RR^{n-p}$.
The charts $\{ (\nu_{X|U_i}=V_i, \tilde V_i,\G_i,\Psi_i)\}$ give the atlas for $\nu_X$.
\end{proof}

If $Z$ is an oriented orbifold and $X$ is an oriented normalizable suborbifold then 
$\G<\SO(p)\x \SO(n-p)$. Hence $\nu_X$ is an oriented orbivector bundle.

\begin{proposition} \label{prop:tubular-nbhd}
   Let $X\subset Z$ be a connected normalizable suborbifold. 
   There exists an open neighbourhood of $X$ diffeomorphic to a neighbourhood of the zero section of
   the normal bundle $\nu_X$. If $X,Z$ are both oriented, then the diffeomorphism is orientation preserving.
\end{proposition}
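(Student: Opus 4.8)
The plan is to run the classical tubular neighbourhood argument $\Gamma_i$-equivariantly in adapted charts, using the geodesics of an ambient metric, and then to check that the resulting local maps are compatible with the changes of charts so that they assemble into a single orbifold map. First I would fix an orbifold Riemannian metric $g$ on $Z$, which exists by the orbifold partition of unity argument already used for orbivector bundle metrics. In an adapted chart $(U_i,\tilde U_i,\Gamma_i,\varphi_i)$ this is a $\Gamma_i$-invariant metric $g_i$ on $\tilde U_i\subset\RR^n$, and since $\Gamma_i<\OO(p)\x\OO(n-p)$ preserves $\tilde U_i'=\tilde U_i\cap(\RR^p\x\{0\})$, the $g_i$-orthogonal complement $N_i\to\tilde U_i'$ of $T\tilde U_i'=\RR^p\x\{0\}$ is a $\Gamma_i$-invariant subbundle of $T\tilde U_i|_{\tilde U_i'}$. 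These local complements glue to an orbivector subbundle $N\subset TZ|_X$, on which I define the normal exponential $E_i(x,v)=\exp_x^{g_i}(v)$ for $v\in N_{i,x}$. Because $\Gamma_i$ acts by isometries fixing $\tilde U_i'$ setwise, geodesics are permuted equivariantly and each $E_i$ is $\Gamma_i$-equivariant.

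Second I would identify $N$ with the normal orbivector bundle $\nu_X$ of Proposition \ref{prop:8}. At $x\in\tilde U_i'$ the linear projection $\pr_2\colon\RR^n=\RR^p\oplus\RR^{n-p}\to\RR^{n-p}$ restricts to a $\Gamma_i$-equivariant isomorphism $N_{i,x}\xrightarrow{\cong}\RR^{n-p}$: it is injective since $N_{i,x}\cap(\RR^p\x\{0\})=0$, and it intertwines the actions since $\Gamma_i$ is block diagonal. Because the changes of charts $\rho_{ij}$ are $g$-isometries, $d\rho_{ij}$ carries $N_i$ to $N_j$, and a short diagram chase shows that under the $\pr_2$-identifications this agrees with the transition map $\hat\rho_{ij}=(\rho_{ij}',d\rho_{ij}'')$ used to build $\nu_X$; hence $N\cong\nu_X$ as orbivector bundles (oriented ones when $Z,X$ are oriented). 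Using the naturality of geodesics under the isometries $\rho_{ij}$, namely $\rho_{ij}(\exp_x^{g_i}(v))=\exp_{\rho_{ij}'(x)}^{g_j}(d\rho_{ij}v)$, the maps $E_i$ are compatible with $\rho_{ij}$ on $Z$ and $\hat\rho_{ij}$ on $\nu_X$, so together with the equivariance they assemble into an orbifold map $\Phi\colon\nu_X\to Z$ defined near the zero section. Along the zero section $E_i(x,0)=x$, and the differential of $E_i$ there is the identity of $\RR^n=T_x\tilde U_i'\oplus N_{i,x}$; thus $\Phi$ is a local orbifold diffeomorphism in a neighbourhood of $X$.

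The main obstacle is upgrading this local statement to a genuine diffeomorphism onto an open neighbourhood of $X$, i.e.\ global injectivity. Upstairs each $E_i$ is injective on a small neighbourhood of the zero section by the inverse function theorem, and equivariance then forces the induced map on the quotient $\tilde V_i/\Gamma_i$ to be injective on a $\Gamma_i$-saturated neighbourhood: if $\gamma E_i(x,v)=E_i(x',v')$ then $E_i(\gamma(x,v))=E_i(x',v')$ yields $\gamma(x,v)=(x',v')$. Passing from these chartwise injectivity statements to a single open neighbourhood of the zero section on which $\Phi$ is injective is the delicate point; it follows from the usual shrinking argument using paracompactness of $X$ (and is immediate when $X$ is compact). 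Finally, when $Z$ and $X$ are oriented one has $\Gamma_i<\SO(p)\x\SO(n-p)$ and $\nu_X$ is oriented by the remark after Proposition \ref{prop:8}; since the differential of $\Phi$ along the zero section is the identity it preserves orientations, so $\Phi$ is orientation preserving on a neighbourhood, completing the argument.
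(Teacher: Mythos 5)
Your proposal is correct and follows essentially the same route as the paper: fix an orbifold Riemannian metric, take the $\Gamma_i$-invariant orthogonal complement of $T\tilde U_i'$ in each adapted chart, identify it with $\nu_X$, and use the equivariant normal exponential map, which is a diffeomorphism near the zero section since its differential there is the identity. You are somewhat more explicit than the paper about the identification of the orthogonal complement with the model fibre $\RR^{n-p}$ of Proposition \ref{prop:8} and about global injectivity, but these are refinements of the same argument rather than a different approach.
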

 
\begin{proof}
We put an orbifold metric $g$ on $Z$. For each point $x\in X$, we take an adapted orbifold chart
$(U,\tilde U,\G,\varphi)$ with $\tilde U'=\tilde U \cap (\RR^p\x\{0\})$, $\G'=\pr_1(\G)$, under
$\pr_1:\OO(p)\x \OO(n-p)\to \OO(p)$. Take $F=\ker (\pr_1)$. We define the normal
space $\tilde\nu_x=(T_x\tilde U')^\perp\cong \RR^{n-p}$ with respect to $g_x$, for $x\in \tilde U'$, 
and consider
 $$
  \tilde \nu_{\tilde U'}=\bigsqcup_{x\in \tilde U'} \nu_x \cong \tilde V'=\tilde U' \x \RR^{n-p}
  $$
and $\nu_{X|U'}=V= \tilde\nu_{\tilde U'}/\G$, as in Proposition \ref{prop:8}. 
Then $(\nu_{X|U'},\tilde\nu_{\tilde U'},\G,\varphi)$ is a
chart for $\nu_X$ over $( U',\tilde U',\G',\varphi')$.

Now we define the exponential map $\exp^\perp:\nu_X \to Z$. For this, take
$T\tilde U=\tilde U\x \RR^n$, and consider a point 
 $$
 (x,v)=(x,0,0,v)\in \tilde\nu_{\tilde U}= T\tilde U \cap \big((\RR^p\x\{0\})\x (\{0\}\x \RR^{n-p})\big),
 $$
and use the exponential for the metric $g$ on $\tilde U$, as 
 $$
  \exp^\perp (x,v) := \exp_{(x,0)}(0,v).
 $$
This is well-defined for some $|v|< \epsilon(x)$. It is $\G$-equivariant, since the metric
is $\G$-invariant. Therefore it defines an orbifold map. It is diffeomorphism near $X$,
since $d\exp^\perp$ is an isomorphism over $v=0$. Therefore there exists a neighbourhood
$\cU=\{(x,v)| \, x\in X,v\in \nu_x, |v|<\epsilon(x)\}\subset \nu_X$ such that $\exp^\perp:\cU \to \cV\subset Z$ is a 
diffeomorphism. The function $\epsilon(x)$ can be taken continuous, and if $X$ is compact, then
we can take $\epsilon_0=\min\{\epsilon(x)| \,  x\in X\}>0$.
\end{proof}

\section{Orbifold Gompf connected sum} \label{sec:3}

\subsection{Symplectic orbifolds}
\begin{definition} \label{def:orb-sympl}
 A symplectic orbifold $(Z,\omega)$ is an oriented  orbifold $Z$ with an $\omega\in \Omega^2_{\orb}(Z)$ such that 
$d\omega=0$ and $\omega^n>0$, where $2n=\dim Z$.
\end{definition}

If $(Z,\omega)$ is a symplectic orbifold, then at every point $x\in Z$ there are \emph{orbifold Darboux charts}
\cite[Proposition 10]{MR}, that is
an orbifold chart $(U,\tilde U, \G,\varphi)$ such that $\G<\UU(n)$ and $\omega=\sum dx_i\wedge dy_i$,
in these coordinates $(x_1,y_1,\ldots, x_n,y_n)$.

Given a symplectic orbifold $(Z,\omega)$, an orbifold almost complex $J$ is  \emph{compatible}
if the orbifold tensor $g$ defined by $g(-,-)=\omega(-,J(-))$, is an orbifold Riemannian metric. They 
always exist \cite[Proposition 8]{MR}.

Let $Z$ be a symplectic $2n$-dimensional orbifold. A \emph{symplectic suborbifold} $X\subset Z$ is a 
suborbifold as in Definition \ref{def:suborb} which is a $2p$-dimensional symplectic orbifold such that there are adapted charts 
$( U,\tilde U,\G,\varphi)$ which are Darboux, where $\tilde U' =\tilde U\cap ( \RR^{2p}\x\{0\})$, and $\G<\UU(n)$, $\G'<\UU(p)$.

A \emph{symplectic normalizable suborbifold} $X\subset Z$ is a normalizable suborbifold as in Definition \ref{def:suborb}, such
that at any $x\in X$, there are adapted charts $( U,\tilde U,\G,\varphi)$, with $\omega_x$ being the standard symplectic form
of $T_xZ$, $\G< \UU(p)\x \UU(n-p)\subset \UU(n)$ and $\G'=\pr_1(\G)<\UU(p)$.
Equivalently, $\G<\SO(2p)\x \SO(2n-2p)$ and it preserves the symplectic form at $x$.

\begin{proposition}\label{prop:11}
Let $X\subset Z$ be a symplectic suborbifold. Then there is a compatible almost 
complex structure $J$ on $Z$ such that $X$ is a $J$-complex
suborbifold, that is, $J(T_xX)=T_xX$ for all $x\in X$.
\end{proposition}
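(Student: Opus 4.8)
The plan is to reduce the statement to the construction of a single orbifold Riemannian metric on $Z$ with a prescribed behaviour along $X$, and then to feed it into the canonical \emph{polar decomposition} map that turns a metric and a symplectic form into a compatible almost complex structure. Recall this construction: given an orbifold metric $g$ on $Z$, define the orbifold endomorphism $A\colon TZ\to TZ$ by $g(Au,v)=\omega(u,v)$. Then $A$ is invertible and $g$-skew-adjoint, the operator $Q=\sqrt{-A^2}$ is positive definite symmetric and commutes with $A$, and $J:=Q^{-1}A$ satisfies $J^2=-\Id$ and is $\omega$-compatible, in the sense that $\omega(-,J-)$ is a positive definite symmetric tensor. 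The key feature is \emph{naturality}: in a chart $(U_i,\tilde U_i,\Gamma_i,\varphi_i)$ the endomorphism $A$ is built from the $\Gamma_i$-invariant tensors $g_i$ and $\omega_i$, so any $\gamma\in\Gamma_i$, being simultaneously a $g_i$-isometry and a symplectomorphism, commutes with $A$, hence with $Q$ and with $J$; the same computation shows compatibility with the changes of charts $\rho_{ij}$. Thus $J$ is a well-defined compatible orbifold almost complex structure, and the only remaining task is to arrange, through the choice of $g$, that $J(T_xX)=T_xX$ for $x\in X$.

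Next I would build the metric $g$. For each $x\in X$ the symplectic condition gives the intrinsic $\omega$-orthogonal splitting $T_xZ=T_xX\oplus(T_xX)^{\omega}$, which needs no normalizability hypothesis. I claim it suffices to choose $g$ so that this splitting is $g$-orthogonal along $X$, i.e.\ $(T_xX)^{\perp_g}=(T_xX)^{\omega}$ for all $x\in X$. Such a metric is produced chart by chart: over an adapted Darboux chart $(U,\tilde U,\G,\varphi)$ with $\tilde U'=\tilde U\cap(\RR^{2p}\times\{0\})$ and $\G<\UU(n)$, the standard Euclidean metric is $\UU(n)$-invariant, hence $\G$-invariant, and it makes $T_xX=\CC^p\times\{0\}$ orthogonal to $(T_xX)^{\omega}=\{0\}\times\CC^{n-p}$ at the points of $\tilde U'$. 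Gluing these local metrics by an orbifold partition of unity subordinated to a cover by such charts (together with arbitrary orbifold metrics on charts disjoint from $X$) yields an orbifold metric $g$ on $Z$. Because the condition that $g$ be block diagonal with respect to $T_xX\oplus(T_xX)^{\omega}$ is convex in $g$ and is satisfied by every local piece that is active at a point of $X$, it passes to the convex combination, so $g$ has the desired orthogonality property along $X$.

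Finally, with this $g$ the endomorphism $A=g^{-1}\omega$ leaves $T_xX$ invariant for $x\in X$: since both $\omega$ and $g$ respect the splitting $T_xX\oplus(T_xX)^{\omega}$ (for $\omega$ by definition of the symplectic complement, for $g$ by the orthogonality just arranged), $A$ is block diagonal there, and hence so are $Q=\sqrt{-A^2}$ and $J=Q^{-1}A$. In particular $J(T_xX)=T_xX$, so $X$ is a $J$-complex suborbifold, while $J$ is compatible on all of $Z$ by the first paragraph. I expect the main obstacle to be the bookkeeping of equivariance and chart-compatibility: one must check that the polar decomposition genuinely commutes with the finite isotropy actions and with the transition maps, so that $J$ is an honest orbifold tensor rather than merely a collection of local ones, and that the orthogonality condition on $g$, being intrinsic, is respected by the change-of-charts gluing. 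The convexity of that condition and the naturality of the assignment $A\mapsto J$ are exactly what make both points routine.
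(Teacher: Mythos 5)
Your proof is correct and follows essentially the same route as the paper: both reduce the problem to constructing an orbifold metric for which the $\omega$-orthogonal complement of $T_xX$ is also $g$-orthogonal along $X$, and then apply the equivariant polar decomposition $A\mapsto J=(\sqrt{-A^2})^{-1}A$, concluding by block-diagonality. The only cosmetic difference is that the paper first fixes a compatible $J_0$ on $X$ and extends the induced metric $g_0(-,-)=\omega(-,J_0(-))$, which additionally pins down $J|_{X}=J_0$, whereas you build the metric directly from Euclidean metrics in adapted Darboux charts; both arguments are sound.
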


\begin{proof}
We follow the argument in \cite[Proposition 8]{MR}. First, we construct
a compatible almost complex structure $J_0$ on $X$. We define the
Riemannian metric $g_0(-,-)=\omega(-,J_0(-))$ over
$X$. We extend it to the whole of $Z$ in such a way that the symplectic orthogonal
$(T_xX)^{\perp,\omega}$ to $T_xX\subset T_xZ$ is also $g_0$-orthogonal. Now 
we define the operator $A\in \End(TX)$ via $g_0(u,Av)=\omega(u,v)$, take
a square root $\sqrt{B}$ of $B=-A^2$, and define $J=-(\sqrt{B})^{-1}B$. This is 
an orbifold compatible almost complex structure. Finally note that
$A_{|T_xX}=J_0$, hence $J_{|X}=J_0$ and thus $X$ is $J$-complex suborbifold.
\end{proof}

\begin{proposition} \label{prop:12}
Let $X\subset Z$ be a symplectic normalizable suborbifold. Let $x\in X$, then there is an adapted Darboux
orbifold chart $(U,\tilde U,\Gamma, \varphi)$ at $x$ for $Z$ such that $\G< \UU(p)\x \UU(n-p)$.
\end{proposition}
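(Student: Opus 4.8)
The plan is to derive the statement from an equivariant, relative version of the Moser isotopy argument, keeping track throughout of the linear $\G$-action and of the suborbifold. Start with an adapted chart $(U,\tilde U,\G,\varphi)$ provided by the definition of symplectic normalizable suborbifold, with $\varphi(0)=x$, $\tilde U'=\tilde U\cap(\RR^{2p}\x\{0\})$, and $\G<\UU(p)\x\UU(n-p)$ acting diagonally on $\RR^{2n}=\RR^{2p}\x\RR^{2n-2p}$. Write $\tilde\omega=\varphi^*\omega$, a closed $\G$-invariant $2$-form on $\tilde U$, and let $\omega_{\mathrm{st}}=\sum dx_i\wedge dy_i$ be the standard form, which is also $\G$-invariant since $\G<\UU(n)$. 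By hypothesis $\tilde\omega$ and $\omega_{\mathrm{st}}$ agree at the origin. The goal is to produce a $\G$-equivariant diffeomorphism $\psi$ of a neighbourhood of $0$, fixing $0$ and preserving $\tilde U'$, with $\psi^*\tilde\omega=\omega_{\mathrm{st}}$; then $(U'',\tilde U'',\G,\varphi\circ\psi)$ is the desired chart, which stays adapted and keeps $\G<\UU(p)\x\UU(n-p)<\UU(n)$ because the linear $\G$-action on the chart is never altered, so the result is automatically a Darboux chart in the sense of the definition.

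The essential preliminary step, and the one I expect to be the main obstacle, is to upgrade the pointwise identity $\tilde\omega=\omega_{\mathrm{st}}$ at $0$ to the identity $\tilde\omega=\omega_{\mathrm{st}}$ at \emph{every} point of $\tilde U'$, by a $\G$-equivariant diffeomorphism preserving $\tilde U'$. First I would apply the orbifold Darboux theorem \cite[Proposition 10]{MR} to the symplectic orbifold $X$ itself (with group $\G'=\pr_1(\G)$), obtaining a $\G'$-equivariant diffeomorphism of $\tilde U'$ that makes $\tilde\omega|_{\tilde U'}$ standard; extending it by the identity in the normal factor $\RR^{2n-2p}$ yields a $\G$-equivariant, $\tilde U'$-preserving diffeomorphism of $\tilde U$, the extension being equivariant precisely because $\G$ acts diagonally. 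It then remains to standardise the normal and mixed components of $\tilde\omega$ along $\tilde U'$. Here the key point is that, in an adapted chart, the normal directions form the trivial product $\tilde U'\x\RR^{2n-2p}$ carrying the \emph{constant} linear $\G$-action through the $\UU(n-p)$-factor. Thus a fibrewise linear change of coordinates over $\tilde U'$, equal to the identity in the tangential directions and chosen by symplectic linear algebra to send $\tilde\omega_z$ to $\omega_{\mathrm{st}}$ at each $z\in\tilde U'$ (a symplectic straightening relative to the subspace $\RR^{2p}$), achieves the goal; averaging the straightening over the finite group $\G$ makes it $\G$-equivariant, and it fixes $\tilde U'$ pointwise.

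With $\tilde\omega=\omega_{\mathrm{st}}$ along $\tilde U'$, I finish with the equivariant relative Moser trick. Set $\tilde\omega_t=\omega_{\mathrm{st}}+t(\tilde\omega-\omega_{\mathrm{st}})$; since the two forms agree along $\tilde U'$, each $\tilde\omega_t$ is $\G$-invariant and, after shrinking, nondegenerate on a neighbourhood of $\tilde U'$. The difference $\tilde\omega-\omega_{\mathrm{st}}$ is closed and vanishes along $\tilde U'$, so the relative Poincar\'e lemma provides a primitive $\sigma$ with $d\sigma=\tilde\omega-\omega_{\mathrm{st}}$ and $\sigma$ vanishing along $\tilde U'$; I would run the homotopy operator using the normal rescaling $r_s(z',z'')=(z',s\,z'')$, which commutes with the diagonal $\G$-action, so that $\sigma$ comes out $\G$-invariant. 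Defining $V_t$ by $\iota_{V_t}\tilde\omega_t=-\sigma$, the field $V_t$ is $\G$-equivariant and vanishes along $\tilde U'$ (because $\sigma$ does and $\tilde\omega_t$ is nondegenerate); hence its flow $\psi_t$ is $\G$-equivariant and fixes $\tilde U'$ pointwise. The usual computation gives $\frac{d}{dt}\psi_t^*\tilde\omega_t=0$, so $\psi=\psi_1$ satisfies $\psi^*\tilde\omega=\omega_{\mathrm{st}}$ and has all the required properties.
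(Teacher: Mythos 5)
Your proof is correct and follows essentially the same route as the paper: start from an adapted chart with $\G<\UU(p)\x\UU(n-p)$, apply the orbifold Darboux theorem to $X$ itself to standardise $\omega$ along $\tilde U'$, then run an equivariant relative Moser argument with a primitive vanishing along $\tilde U'$, and conclude that $\G$ is unchanged because the coordinate change is the identity to first order at $x$. Your explicit fibrewise linear straightening of the normal and mixed components of $\omega$ along $\tilde U'$ (and the equivariant radial homotopy operator) fills in details the paper leaves implicit in its appeal to the argument of \cite[Proposition 10]{MR}.
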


\begin{proof}
Take an adapted chart $(U,\tilde U,\G,\varphi)$ such that $\tilde U'=\tilde U\cap (\RR^{2p}\x\{0\})$
and $\G<\UU(p)\x \UU(n-p)$, where $2n=\dim Z$ and $2p=\dim X$. We can assume that $\tilde U=\tilde U'\x\tilde U''$.
Take now a Darboux chart $(V',\tilde V',\G',\psi)$ for $X$ with $\tilde V'\subset \tilde U'$, and $\G'< \UU(p)$.
This gives a chart $\tilde V'\x\tilde U''$ on which $\omega_{|\tilde V'\x\{0\}}=\o_0$ is the standard form. It
can be extended to a Darboux chart on some $\tilde V'\x\tilde V''\subset \tilde V'\x\tilde U''$ 
by using the argument in the proof of \cite[Proposition 10]{MR}. Noting that $(\omega-\omega_0)_{|\tilde V'}=0$,
the form $\mu\in \Omega^1(\tilde V'\x\tilde U'')$ so that
$\omega-\omega_0=d\mu$, can be arranged to be zero over $\tilde V'$.
All the charts are the identity at first order at $x=(0,0)\in \tilde U'\x\tilde U''$, so $\G$ stays fixed. Then 
$\G<\UU(p)\x \UU(n-p)$ in the Darboux chart.
\end{proof}

A symplectic form on a orbivector space $\RR^{2m}/F$ is a non-degenerate
$2$-form on $\RR^{2m}$ invariant by $F$. In particular $F< \UU(m)$. 

\begin{definition} \label{def:13}
A \emph{symplectic orbivector bundle} $\pi:E\to X$ is 
an orbivector bundle with fiber a symplectic orbivector space
$V=\RR^{2m}/F$, as in Definition \ref{def:orbibundle} where we require $\Upsilon_i<\OO(2n)\x \UU(m)$ in (i),
and the transition maps in (ii) satisfy $g_{ij}:\tilde U_i\to \UU(m)$.
\end{definition}

In particular, a symplectic orbivector bundle $\pi:E\to X$ has a well-defined symplectic form on each fiber
$\omega_x \in \Omega^2_{\orb}(E_x)$, for $x\in X$, which identifies canonically to the orbifold
symplectic form $\omega_V$ on $V=\RR^{2m}/F$.

\begin{proposition}\label{prop:Omega-nu}
Let $(X,\omega)$ be a symplectic orbifold and let $E\to X$ be a symplectic orbivector bundle. 
 Then there exists a closed orbifold $2$-form $\Omega$ such that $\Omega_{|X}= \omega$
 and $\Omega_{|E_x}=\omega_V$, for all $x\in X$, and such that $\Omega$ is symplectic in a neighbourhood
 of the zero section. Moreover, $E_x$ and $T_xX$ are symplectically orthogonal at the 
 zero section. 
 \end{proposition}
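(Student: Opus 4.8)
The plan is to adapt the classical construction for symplectic vector bundles (the non-orbifold input behind Gompf's gluing \cite{Gompf}), carried out equivariantly in orbifold charts so that everything descends to genuine orbifold forms. The form $\pi^*\omega$ is closed but totally degenerate in the fiber directions, so the entire content is to produce a closed $2$-form that is fiberwise nondegenerate and interpolates between $\omega$ on the zero section and $\omega_V$ on the fibers.

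First I would fix an invariant connection. Since $E\to X$ is a symplectic orbivector bundle, by Definition \ref{def:13} I may take an atlas with $\Upsilon_i<\OO(2n)\x\UU(m)$ and transition maps $g_{ij}\colon\tilde U_i\to \UU(m)$. The reduction to $\UU(m)$ already equips the fibers with a Hermitian metric, complex structure, and the form $\omega_V$, all preserved by the structure group. Using orbifold partitions of unity as in \cite[Propositions 5, 6]{MR}, I would build a $\UU(m)$-connection $\nabla$ on $E$ that is $\Upsilon_i$-invariant in every chart. This gives in each chart a splitting of $T\tilde V_i$ into horizontal and vertical subbundles with vertical projection $\mathrm{vert}$, where along the zero section the horizontal space coincides with the tangent to the zero section.

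Next I would define the tautological vertical $1$-form on the total space by
$$
 \beta_{(x,v)}(\xi)=\tfrac12\,\omega_V\big(v,\mathrm{vert}(\xi)\big),
$$
identifying the vertical space with the fiber $E_x$ in the canonical way for a vector bundle. The key point is that, because $\UU(m)$ preserves $\omega_V$ and the metric, $\beta$ is $\Upsilon_i$-invariant in each chart and transforms correctly under the change-of-chart maps $\lambda_{ij}(x,y)=(\rho_{ij}(x),g_{ij}(x)y)$; hence $\beta\in\Omega^1_{\orb}(E)$ is a genuine orbifold $1$-form. I then set
$$
 \Omega=\pi^*\omega+d\beta,
$$
which is closed automatically, since $\pi^*\omega$ is closed and $d\beta$ is exact. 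On a fiber $E_x$ the term $\pi^*\omega$ vanishes while $\beta$ restricts to the standard radial primitive $\tfrac12\,\omega_V(v,\cdot)$ of $\omega_V$, so $\Omega_{|E_x}=\omega_V$; and since $\beta$ vanishes identically along the zero section, $\Omega_{|X}=\omega$.

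Finally I would check nondegeneracy and orthogonality at the zero section. At a point $(x,0)$ the horizontal–vertical splitting gives $T_{(x,0)}E\cong T_xX\oplus E_x$, and a short computation using that $\beta$ and its horizontal derivative vanish along the zero section shows $\Omega_{(x,0)}=\pi^*\omega\oplus\omega_V$ in this splitting. This is nondegenerate and exhibits $T_xX$ and $E_x$ as symplectically orthogonal; since nondegeneracy is an open condition, $\Omega$ stays symplectic on a neighbourhood of the zero section. I expect the main obstacle to be purely the orbifold bookkeeping, namely producing an honestly $\Upsilon_i$-invariant connection and verifying that $\beta$ is compatible with the transition maps $\lambda_{ij}$; both of these rest on the reduction of the structure group to $\UU(m)$, which preserves $\omega_V$ together with its radial primitive.
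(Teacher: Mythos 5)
Your proof is correct, and it takes a genuinely different (though closely related) route from the paper's. The paper applies the Thurston-style patching directly to the primitive of $\omega_V$: in each trivializing chart it pushes forward a fiberwise primitive $\lambda_i$ of $\omega_V$ and sets $\Omega=\pi^*\omega+d\left(\sum\rho_i\lambda_i\right)$ for an orbifold partition of unity $\{\rho_i\}$, then checks the restriction to fibers and to the zero section chart by chart. You instead insert a global object --- an invariant $\UU(m)$-connection, itself built by the same partition-of-unity patching --- and write the correction term as the single intrinsic $1$-form $\beta_{(x,v)}(\xi)=\tfrac12\,\omega_V(v,\mathrm{vert}(\xi))$; this is the minimal-coupling construction. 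What your version buys is that the orbifold bookkeeping is concentrated in one place (invariance of the connection), after which the descent of $\beta$ is automatic because the radial primitive $\tfrac12\,\omega_V(v,\cdot)$ is genuinely $\UU(m)$-invariant, and the computation $\Omega_{(x,0)}=\omega_x\oplus\omega_V$ in the horizontal--vertical splitting is clean since the horizontal component of $\beta$ is quadratic in $v$. What the paper's version buys is economy: no connection is needed, only local primitives and a partition of unity, at the cost of verifying invariance and compatibility of $\sum\rho_i\lambda_i$ across charts. Both arguments conclude identically, by openness of nondegeneracy near the zero section. The one step you should make explicit is the existence of the invariant unitary connection: take the trivial connection in each chart $\tilde V_i=\tilde U_i\x\CC^m$ (which is $\Upsilon_i$-invariant because $\Upsilon_i$ acts diagonally and linearly on the fiber) and average with the orbifold partition of unity, using that affine combinations of unitary connections are unitary; this is routine but is the load-bearing point of your approach.
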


\begin{proof}
Take a covering $\{(U_i,\tilde U_i, \G_i, \varphi_i)\}$ by orbifold Darboux charts for $X$, with
$\G_i<\UU(n)<\SO(2n)$, and so that $\tilde V_i=\tilde U_i \x \CC^m$, where $\Upsilon_i <\SO(2n)\x\UU(m)$.
As $\G_i\subset \UU(n)$, we have $\Upsilon_i<\UU(n)\x \UU(m)$.
Let $\omega_V$ be the natural symplectic form of $V=\CC^m/F$. Then take an orbifold partition of 
unity  $\{\rho_i\}$, and write $\omega_V=d\lambda$, where $\lambda=\sum x_jdy_j$, in the 
coordinates $z_j=x_j+iy_j$ of $\CC^m$.
We take $\lambda_i=(\Psi_i)_*(\sum x_j dy_j)$ and define
 \begin{equation}\label{eqn:xs}
 \Omega=\pi^*\omega+d(\sum \rho_i \lambda_i).
  \end{equation} 
Thus $\Omega_{|E_x}=\omega_V$ at every fiber. At a point $x\in X$ in the zero section, we have $\lambda_i(x)=0$, hence
$\Omega_x=\omega_X+\sum \rho_i  \omega_V=\omega_X+\omega_V$.
So $\Omega$ is symplectic over $X$, and as this is an open condition, it is so in a
neighbourhood of the zero section.
\end{proof}

\begin{proposition} \label{prop:15}
Let $X\subset Z$ be a symplectic normalizable suborbifold. Then $\nu_X$ is a symplectic orbivector bundle.
\end{proposition}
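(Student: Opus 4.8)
The plan is to equip the normal bundle $\nu_X$, constructed in Proposition \ref{prop:8}, with a fibrewise Hermitian structure coming from a well-chosen compatible almost complex structure on $Z$, and then to verify that this structure forces the transition maps into $\UU(m)$, where $2m=\dim Z-\dim X=2(n-p)$. Granting a fibrewise symplectic form is the easy part; the genuine work is the reduction of the structure group to the unitary group demanded by Definition \ref{def:13}.

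First I would invoke Proposition \ref{prop:11} to fix a compatible almost complex structure $J$ on $Z$ for which $X$ is a $J$-complex suborbifold, i.e.\ $J(T_xX)=T_xX$ for all $x\in X$, and set $g(-,-)=\omega(-,J(-))$. The key linear-algebra observation is that at each $x\in X$ the $g$-orthogonal complement coincides with the symplectic orthogonal complement: since $J$ preserves $T_xX$ and compatibility gives $\omega(J-,J-)=\omega(-,-)$, one checks that $(T_xX)^{\perp,g}=(T_xX)^{\perp,\omega}$ and that $J$ preserves this subspace. Running the exponential construction of Proposition \ref{prop:tubular-nbhd} with this particular metric $g$ identifies the fibre $\nu_x$ with $(T_xX)^{\perp,g}$, on which $\omega$, $J$ and $g$ restrict to a compatible triple. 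In particular $\omega$ is non-degenerate on $\nu_x$, because the symplectic orthogonal of a symplectic subspace is again symplectic, so each fibre becomes a Hermitian space isomorphic to $\CC^m$, with the natural fibrewise symplectic form equal to the restriction of $\omega$.

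Next I would pin down the fibre orbivector space. In the adapted Darboux charts supplied by Proposition \ref{prop:12}, where $\G<\UU(p)\x\UU(n-p)$, the normal isotropy group is $F=\ker(\pr_1)<\UU(n-p)=\UU(m)$, acting on $\nu_x$ by unitary transformations preserving $\omega$; hence the fibre $V=\RR^{2m}/F$ is a symplectic orbivector space in the sense preceding Definition \ref{def:13}. Finally, to match Definition \ref{def:13}, I would show the transition maps take values in $\UU(m)$. Since $J$ and $g$ are globally defined orbifold tensors on $Z$, they are preserved by every change of charts, and so is their restriction to the normal directions; choosing $J$-unitary orthonormal normal frames in each chart (Gram--Schmidt carried out $\G_i$-equivariantly, exactly as in the orthogonalization of orbivector-bundle charts discussed after Definition \ref{def:orbibundle}) makes the change-of-frame maps $g_{ij}\colon\tilde U_i\to\UU(m)$, with structure groups $\Upsilon_i<\OO(2p)\x\UU(m)$ (in fact with first factor in $\UU(p)$ by Proposition \ref{prop:12}). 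The step I expect to be the main obstacle is precisely this last one: a fibrewise symplectic form is immediate, but upgrading the \emph{transition maps} from symplectic (or merely linear) to unitary genuinely relies on the compatible pair $(J,g)$ and on choosing the $J$-unitary frames in a way that is both $\G_i$-equivariant within each chart and compatible across all changes of charts, which is what must be checked carefully.
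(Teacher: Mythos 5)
Your proof is correct, and it is more elaborate than the one in the paper, which is essentially a two-line verification: the paper takes the adapted Darboux charts of Proposition \ref{prop:12} (where $\G_i<\UU(p)\x\UU(n-p)$), feeds them into the normal-bundle construction of Proposition \ref{prop:8} to get charts $\tilde V_i=\tilde U_i'\x\RR^{2n-2p}$ with $\G_i<\SO(2p)\x\UU(n-p)$, and asserts that these satisfy Definition \ref{def:13}. What you add --- and correctly identify as the real issue --- is the verification of condition (ii) of Definition \ref{def:13}, namely that the transition maps land in $\UU(m)$ rather than merely in $\mathrm{Sp}(2m)$: changes of Darboux charts are a priori only symplectomorphisms, so their normal Jacobian blocks $d\rho_{ij}''$ are symplectic but not automatically unitary. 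Your fix --- fixing a global compatible $J$ with $X$ $J$-complex via Proposition \ref{prop:11}, identifying $\nu_x$ with $(T_xX)^{\perp,g}=(T_xX)^{\perp,\omega}$ so that $(\omega,J,g)$ restrict to a compatible triple on each fibre, and then performing an equivariant unitary Gram--Schmidt exactly as in the orthogonalization of orbivector-bundle charts --- is the standard reduction of the structure group and makes explicit a step the paper leaves implicit. The trade-off is that your argument depends on Propositions \ref{prop:11} and \ref{prop:tubular-nbhd} (hence on a choice of global tensors), whereas the paper's argument is purely local and chart-based; both are legitimate, and yours is the more self-contained on the structure-group point.
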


\begin{proof}
Consider a covering by Darboux adapted orbifold charts $\{(U_i,\tilde U_i, \G_i, \varphi_i)\}$ given by Proposition \ref{prop:12}. Then 
$\tilde U'_i=\tilde U_i \cap (\RR^{2p}\x\{0\})$, $\G_i<\UU(p)\x \UU(n-p)$ and
$\G'_i=\pr_1(\G_i)$. Following the construction of $\nu_X$ in Proposition \ref{prop:8}, we take $\tilde V_i=
\tilde U'_i \x \RR^{2n-2p}$, $\G_i<\SO(2p)\x \UU(n-p)$, $V_i=\tilde V_i/\G_i$, and the quotient map $\Psi_i:\tilde V_i\to V_i$.
The chart $(V_i,\tilde V_i,\G_i,\Psi_i)$ satisfies the conditions of Definition \ref{def:13}.
\end{proof}

\begin{proposition} \label{prop:16}
Let $X\subset Z$ be a (compact) symplectic normalizable submanifold. Then there is an orbifold symplectomorphism
$f:\cU\to \cV$, where $\cU\subset \nu_X$ is a neighbourhood
of the zero section, $\cV\subset Z$ is a neighbourhood of $X$, and $f_{|X}=\id_X$.
\end{proposition}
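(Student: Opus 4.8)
The plan is to prove an orbifold version of the symplectic neighbourhood theorem (Weinstein's theorem) by combining the two neighbourhood models we already have: the exponential diffeomorphism onto a tubular neighbourhood from Proposition \ref{prop:tubular-nbhd}, and the canonical symplectic form $\Omega$ on the total space of $\nu_X$ constructed in Proposition \ref{prop:Omega-nu}. Concretely, I would first use Proposition \ref{prop:tubular-nbhd} to obtain an orbifold diffeomorphism $\Phi:\cU_0\to \cV_0\subset Z$ from a neighbourhood $\cU_0$ of the zero section in $\nu_X$ onto a neighbourhood of $X$, with $\Phi_{|X}=\id_X$. Since $X\subset Z$ is a symplectic normalizable suborbifold, Proposition \ref{prop:15} tells us $\nu_X$ is a symplectic orbivector bundle, so Proposition \ref{prop:Omega-nu} endows $\nu_X$ with a closed orbifold $2$-form $\Omega$ that is symplectic near the zero section, restricts to $\omega$ on $X$, and makes $T_xX$ and the fibre $(\nu_X)_x$ symplectically orthogonal along $X$.

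The heart of the argument is the orbifold Moser trick. I would consider the two symplectic forms on a neighbourhood of the zero section of $\nu_X$: the canonical $\Omega$, and the pullback $\Phi^*\omega$. The key point is that these two forms \emph{agree along the zero section} $X$: both restrict to $\omega$ on $T_xX$, and I must verify they agree as bilinear forms on all of $T_{(x,0)}(\nu_X)=T_xX\oplus (\nu_X)_x$. For $\Omega$ this splitting is symplectically orthogonal by Proposition \ref{prop:Omega-nu}; for $\Phi^*\omega$ the same orthogonality holds because $\Phi$ was built from the metric-exponential, so that $d\Phi$ at $x$ sends the fibre to the symplectic-orthogonal complement of $T_xX$ (here I would, if necessary, first adjust the compatible almost complex structure via Proposition \ref{prop:11} so that the normal directions are genuinely $J$-orthogonal to $X$, guaranteeing the normal and tangent splittings match up symplectically). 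Once $\Omega=\Phi^*\omega$ along $X$, I set $\omega_t=(1-t)\Omega+t\,\Phi^*\omega$; these are all closed, all restrict to the same nondegenerate form along $X$, and hence all are symplectic on a possibly smaller neighbourhood of $X$ by openness of nondegeneracy together with compactness of $X$.

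I would then run the Moser homotopy: since $\frac{d}{dt}\omega_t=\Phi^*\omega-\Omega$ is closed and vanishes along $X$, the orbifold Poincaré lemma (applied $\G$-equivariantly in each adapted Darboux chart from Proposition \ref{prop:12}, then patched with an orbifold partition of unity from \cite{MR}) yields a $1$-form $\sigma$ with $d\sigma=\Phi^*\omega-\Omega$ and $\sigma_{|X}=0$. Solving $\iota_{X_t}\omega_t=-\sigma$ for the time-dependent orbifold vector field $X_t$ (possible since each $\omega_t$ is nondegenerate), I integrate its flow $\psi_t$; because $\sigma$ vanishes on $X$ so does $X_t$, hence the flow fixes $X$ pointwise and is defined for $t\in[0,1]$ on a neighbourhood of $X$ by compactness. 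All objects are $\G$-invariant in charts and compatible with changes of charts, so the flow is an orbifold diffeomorphism, and $\psi_1^*(\Phi^*\omega)=\Omega$. Composing, $f=\Phi\circ\psi_1$ is the desired orbifold symplectomorphism with $f_{|X}=\id_X$.

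The main obstacle I anticipate is not the Moser iteration itself, which is formally identical to the manifold case, but the \emph{equivariant and chart-compatible bookkeeping}: one must check at each step that the Poincaré-lemma primitive $\sigma$, the vector field $X_t$, and the flow $\psi_t$ are genuine orbifold objects, i.e.\ they are built from $\G_i$-invariant data in the adapted Darboux charts of Proposition \ref{prop:12} and transform correctly under the changes of charts $\rho_{ij}$. The partition-of-unity argument ensures the primitive can be chosen globally while staying $\G$-invariant, and the normalizability of $X$ (via Proposition \ref{prop:12}) is exactly what makes the normal/tangent splitting $\G$-invariant so that ``$\sigma$ vanishes along $X$'' is a well-defined orbifold condition; this is where the careful setup in Sections \ref{sec:orbifolds}--\ref{sec:3} pays off.
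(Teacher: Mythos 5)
Your proposal is correct and follows essentially the same route as the paper: tubular neighbourhood from Proposition \ref{prop:tubular-nbhd}, the model form $\Omega$ from Proposition \ref{prop:Omega-nu}, agreement of $\Omega$ and the pulled-back form along the zero section via the symplectically orthogonal splitting $T_xX\oplus\nu_x$, a primitive vanishing along $X$, and the orbifold Moser trick. The only cosmetic difference is that the paper obtains the primitive from $H^2(\cU)=H^2(X)$ by radial retraction (citing \cite[Proposition 19]{MR} for the vanishing along $X$) rather than a chart-by-chart Poincar\'e lemma, which changes nothing essential.
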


\begin{proof}
This is an extension of the argument in \cite[Proposition 19]{MR} to the case of symplectic normalizable suborbifolds.
 By Proposition \ref{prop:tubular-nbhd}, there is a diffeomorphism $f:\cU\to \cV$, where $\cU\subset \nu_X$ 
 is a neighbourhood of the zero section, and $\cV\subset Z$ is a neighbourhood of $X$ and $f_{|X}=\id$.
 Take $\Omega$ constructed in
 Proposition \ref{prop:Omega-nu}, and consider $\omega_0=\Omega$ and the pull-back $\omega_1=f^*\omega$ on $\cU\subset \nu_X$. 
  Making $\cU,\cV$ smaller if
 necessary, we can suppose that both are symplectic. Note also that $\Omega_{|X}=\omega_{|X}$
 and $\Omega_{|\nu_x}=\omega_{|\nu_x}$, for $x\in X$. As $T_xX\oplus \nu_x=T_xZ$ is a
 symplectic orthogonal splitting for both symplectic forms, 
 we have that $\omega_0=\Omega$ and $\omega_1=f^*\omega$ coincide along
 $X$. Now $[\omega_1]=[\omega_0]$ since $H^2(\cU)=H^2(X)$ (by a radial retraction), so there is an orbifold 
 $1$-form $\lambda$ such that $\omega_1-\omega_0=d\lambda$.
 We can even assume that $\lambda_x=0$ for
 $x\in X$ as in \cite[Proposition 19]{MR}.  Then Moser's trick works: take $\omega_t=\omega_0+t(\omega_1-\omega_0)$,
a vector field $X_t$ such that  $i_{X_t}\omega_t=-\lambda$, and the flow $\varphi_t:\cU \to \cU$ of $X_t$ (reducing $\cU$ if necessary). It is
standard to check that $\varphi_t^*\omega_t=\omega_0$ and $\varphi_{t|X}=\id_X$. The sought symplectomorphism is
$g=f\circ \varphi_1:\cU \to \cV$.
\end{proof}

\subsection{Reversing the normal bundle}
Now we consider the case of an \emph{orbiline bundle} $\pi:E\to X$, that is
a rank $m=1$ complex orbivector bundle, with fiber $V=\CC/F$, where $F=\ZZ_m< \UU(1)$, for some integer $m\geq 1$.
There is a natural (orbifold) hermitian metric on $E$, that defines a radial function $r$. We endow $E$ with the symplectic form
$\Omega$ given by Proposition \ref{prop:Omega-nu}. 

In a local chart $(U_i,\tilde U_i, \G_i, \varphi_i)$ for $X$, we have coordinates $x=(x_1,\ldots, x_{2n})$ for the base
and coordinates $(u,v)$ for the fiber. We will use polar coordinates $(r,\theta)$ for the fiber. 
Consider the chart $(V_i,\tilde V_i, \Upsilon_i,\Psi_i)$ for $E_{|U_i}$.
Then $\tilde V_i=U_i \x \CC$, and the product symplectic form is $\omega+du\wedge dv=\omega + rdr\wedge d\theta_i=
\omega+ d(\frac12r^2d\theta_i)$. Note that $r$ is globally well-defined, but $\theta_i=(\Psi_i)_*\theta$ is defined only
on the chart (it is defined up to addition of a function on the base). Hence we can set as in (\ref{eqn:xs}),
 $$
  \Omega=\omega+\sum \frac12 d(\rho_i  \, r^2d\theta_i).
  $$
Such a form is locally written in a chart as  
 \begin{equation}\label{eqn:b1}
 \Omega= \omega+ r^2 \alpha + \beta \wedge rdr 
  + rdr\wedge d\theta,
  \end{equation}
where $\alpha$ is a $2$-form on the base and $\beta$ is a $1$-form on the base.

Let $\overline{E}$ be the conjugate vector bundle. This is defined as $\overline{E}=E$ with the opposite
orientation. It has charts $\tilde V_i=\tilde U_i \x \CC$, where we take polar coordinates $(r',\theta_i')$
given by $r'=r$, $\theta_i'=-\theta_i$. Hence the symplectic form on the fiber is $r'dr'\wedge d\theta'=-rdr\wedge d\theta$. The process above
serves to construct a symplectic form $\overline\Omega$ on $\overline{E}$ which is
 $$
 \overline\Omega=\omega + \sum \frac12 d(\rho_i (r')^2d\theta'_i)  =\omega - \sum \frac12d(\rho_i (r')^2d\theta_i).
  $$ 
On the same trivialization as above, we have
 \begin{equation}\label{eqn:b2}
 \overline\Omega= \omega - (r')^2 \alpha - \beta \wedge r'dr' 
 - r'dr'\wedge d\theta.
 \end{equation}

We denote by $B_\e(E)=\{e\in E\,|\, r(e)<\e\}$ the $\e$-disc bundle, and $S_\e(E)=B_{2\e}(E)-\overline{B_{\e}(E)}$.
Take the map 
 \begin{equation}\label{eqn:ge}
 g_\e:S_\e(E) \to S_\e(\overline{E}),
 \end{equation}
defined by
 $$
 \theta'=-\theta,  r'= \left( 5 \e^2- r^2\right)^{1/2}.
 $$
This sends $r\in (\e,2\e)$ to $r'\in (\e,2\e)$ and $r dr=-r'  dr'$. 
Therefore
 $$
 g_\e^*(\overline\Omega) = \omega - (5\e^2-r^2) \alpha + \beta \wedge rdr 
 + rdr\wedge d\theta .
 $$
Hence 
  $$
   |\Omega- g_\e^*(\overline\Omega)|=|5\e^2\alpha| =O(\e^2)
 $$
on $S_\e(E)$. 
 
The map $g=g_\e:S_\e(E) \to S_\e(\overline{E})$ is the identity on cohomology, 
hence $[\Omega]=g^*[\overline\Omega]$. 
Let $\gamma=\Omega-g^*\overline\Omega$. The slice $Y=\{e\in E\, |\, r(e)=\e\}$ is a compact orbifold, and 
$S_\e(E)\cong Y \x (\e,2\e)$. Write $\gamma=\gamma_0\wedge dr +\gamma_1$,
and take $\delta=- \int_{\e}^r \gamma_0 dr$. As $d\gamma=0$, we have $d\gamma_0=-\frac{\bd \gamma_1}{\bd r}$.
Then $d\delta=\gamma_0 \wedge dr - \int_\e^r (d\gamma_0)dr=
\gamma_0 \wedge dr + \int_\e^r \frac{\bd \gamma_1}{\bd r} dr=\gamma-i_Y\gamma$. Then $|\delta|=O(\e^2)$.
Next $|\gamma|=O(\e^2)$, then $|i_Y\gamma|=O(\e^2)$. In a compact orbifold we have Hodge theory as in
a compact manifold \cite{BBFMT}. We have the differential $d:(\ker d)^\perp \to \im d$ that is an isomorphism
when using Sobolev norms $W^{k,2}$.
So we have $i_Y\gamma=d\tau$, for some $\tau \in (\ker d)^\perp \subset \Omega^1(Y)$ where
$||\tau||_{W^{k,2}} \leq ||i_Y\gamma||_{W^{k-1,2}}$. So $\gamma=d\mu$ with
$\mu =\delta+\tau$ and $|\mu|=O(\e^2)$.

Now we take a function $\rho$ that $\rho\equiv 1$ for $r\leq \e$ and $\rho\equiv 0$ for $r\geq 2\e$, so $|d\rho|=O(\e^{-1})$.
We define 
 $$
 \hat\Omega  = \Omega - d (\rho\mu ).
 $$
Thus $|\hat\Omega-\Omega|=O(\e)$, hence $\hat\Omega$ is symplectic for $\e>0$ small enough.
We have that $\hat\Omega=\Omega$ on $E-B_{2\e}(E)$ and $\hat\Omega=g^*_\e(\overline\Omega)$ on $B_\e(E)$.

\subsection{Orbifold Gompf connected sum}

 Let now $Z,Z'$ be two symplectic $2n$-orbifolds. Suppose that $X\subset Z$, $X'\subset Z'$ are
  compact $(2n-2)$-dimensional symplectic normalizable suborbifolds. Let $\nu_X \to X$ and $\nu_{X'}\to X'$ be
 the normal orbivector bundles to $X$, $X'$, respectively, defined in Proposition \ref{prop:8}. By Proposition \ref{prop:15},
 they are symplectic orbivector bundles, that is orbiline bundles. 
 Suppose that we have an orbifold diffeomorphism $\Phi:X \to X'$ such that  
 $$
 \hat\Phi: \overline{\nu}_X \stackrel{\cong}{\too} \Phi^*\nu_{X'}\, .
 $$
We endow $\nu_{X'}$ with the push-forward symplectic form $\hat\Phi_*(\overline\Omega_{\nu_X})$, so that $\hat\Phi$
becomes a symplectomorphism.
  
By Proposition \ref{prop:16}, there are neighbourhoods $X\subset \cV\subset Z$, $X'\subset \cV' \subset Z'$
and neighbourhoods of the zero section $\cU\subset \nu_X$, $\cU'\subset \nu_{X'}$ with orbifold 
symplectomorphisms $f:\cU\to \cV$, $f':\cU'\to \cV'$, with $f_{|X}=\Id$, $f'{}_{|X'}=\Id$.
In (\ref{eqn:ge}) we constructed a symplectomorphism 
 $$
 g_\e: S_\e(\nu_X) \to S_\e(\overline{\nu}_X), 
 $$
and an orbifold symplectic form $\hat\Omega$ on some $B_{\delta}(\nu_X)$ such that
  $\hat\Omega=\Omega_{\nu_X}$ on $\nu_X-B_{2\e}(\nu_X)$ and $\hat\Omega=g^*_\e (\overline\Omega_{\nu_X})
  =g^*_\e (\hat\Phi^*(\Omega_{\nu_{X'}}))$ 
  on $B_\e(\nu_X)$. Here $\delta>0$ is small enough so that $B_\delta(\nu_X)\subset \cU$, 
  $B_\delta(\nu_{X'})\subset \cU'$,  and also $\e<\delta/2$.
  
 Consider now
  \begin{align*}
   Z^o &= Z- f(B_\e(\nu_X)), \\
   Z'^{o} &= Z'- f'(B_\e(\nu_{X'})). 
   \end{align*}
and the symplectomorphism
 $$
 \Theta = f'\circ \hat\Phi \circ g_\e \circ f^{-1}: f(S_\e(\nu_X)) \to f'(S_\e(\nu_{X'}))   .
 $$
  
\begin{definition}\label{def:Gompf}
We define the \emph{orbifold 
Gompf connected sum} of $Z$ and $Z'$ along $X\cong X'$ as the symplectic orbifold
 $$
 \widehat Z=(Z^o \cup Z'^{o})/\Theta ,
 $$
and it will be denoted $\widehat Z=Z \#_{X=X'} Z'$.
\end{definition}

\section{Cyclic $4$-dimensional orbifolds} \label{sec:4}

\subsection{Cyclic orbifolds}

A {\em cyclic} orbifold has all isotropy groups which are cyclic groups $\Gamma\cong \ZZ_m$,
and $m=m(x)$ is the order of the isotropy at $x$. Suppose now 
that $X$ is an oriented cyclic $4$-dimensional orbifold. 

Take $x\in X$ and a chart $\varphi:\tilde U\to U$ around $x$. Let 
$\Gamma=\mathbb{Z}_m<\SO(4)$ be the isotropy group. Then $U$ is homeomorhic to an open neighbourhood of 
$0\in \mathbb{R}^4/\mathbb{Z}_m$. A matrix of finite order in $\SO(4)$ is conjugate to a diagonal matrix in $\U(2)$ of
the type $(\exp(2\pi i j_1/m),\exp(2\pi i j_2/m))=(\xi^{j_1},\xi^{j_2})$, where $\xi=e^{2\pi i/m}$. Therefore we can
suppose that $\tilde U\subset \CC^2$ and $\Gamma=\ZZ_m=\la \xi\ra\subset \U(2)$ acts on $\tilde U$ as
 \begin{equation} \label{action}
   \xi \cdot (z_1,z_2) = (\xi^{j_1} z_1,\xi^{j_2}z_2).
 \end{equation}
Here $j_1,j_2$ are defined modulo $m$.
As the action is effective, we have $\gcd(j_1,j_2,m)=1$. We call $\mathbf{j}_x=(m,j_1,j_2)$ the \emph{local invariants}
at $x$.

We say that $D\subset X$ is an
isotropy surface of multiplicity $m$ if $D$ is closed, and there is a 
dense open subset $D^\circ\subset D$ which is a smooth surface
and $m(x)=m$, for $x\in D^\circ$. The \emph{local invariants} for $D$ are those of a point
in $D^\circ$, that is $\mathbf{j}_D=(m,j)$ such that locally $D=\{(z_1,0)\}$ and the action is
given by $\xi=e^{2\pi i /m}$, $\xi\cdot (z_1,z_2)=(z_1,\xi^j z_2)$.

\begin{proposition}[{\cite[Proposition 2]{MR}}] \label{prop:models}
Let $X$ be a (cyclic, oriented, $4$-dimensional) orbifold and $x\in X$ with local model $\CC^2/\ZZ_m$. Then there
are at most two isotropy surfaces $D_i$, with multiplicity $m_i | m$, through $x$. If there are two such
surfaces $D_i,D_j$, then they intersect transversely and $\gcd(m_i,m_j)=1$. The fundamental group of the link of
$x$ has order $d$ with $m_im_j  d=m$.
\end{proposition}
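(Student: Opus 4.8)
The plan is to work entirely in the local uniformizing chart, where by the normal form (\ref{action}) the isotropy group is $\Gamma=\ZZ_m=\la\xi\ra$ acting on $\tilde U\subset\CC^2$ by $\xi\cdot(z_1,z_2)=(\xi^{j_1}z_1,\xi^{j_2}z_2)$ with $\xi=e^{2\pi i/m}$ and $\gcd(j_1,j_2,m)=1$. First I would determine the isotropy locus. A power $\xi^k$ fixes a point $(z_1,z_2)$ iff $z_1=0$ or $\xi^{kj_1}=1$, and simultaneously $z_2=0$ or $\xi^{kj_2}=1$; if both coordinates are nonzero this forces $m\mid kj_1$ and $m\mid kj_2$, hence $\xi^k=1$ because $\gcd(j_1,j_2,m)=1$. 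Thus every point off the two coordinate axes is regular, and the only possible isotropy surfaces through $x$ are $D_1=\{z_2=0\}$ and $D_2=\{z_1=0\}$, giving at most two. The generic stabilizer along $D_1$ is $\{\xi^k:\,m\mid kj_1\}$, of order $m_1=\gcd(m,j_1)$, and similarly $D_2$ has multiplicity $m_2=\gcd(m,j_2)$; in particular $m_i\mid m$. If both axes carry nontrivial isotropy and a prime $p$ divided both $m_1$ and $m_2$, then $p$ would divide $m$, $j_1$ and $j_2$, contradicting $\gcd(j_1,j_2,m)=1$; hence $\gcd(m_1,m_2)=1$.

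Transversality is immediate in the chart: $D_1,D_2$ lift to the coordinate axes of $\CC^2$, whose tangent spaces span $\CC^2$, and this transversal intersection descends to the quotient.

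For the fundamental group of the link I would identify the link with $S^3/\ZZ_m$, the quotient of the unit sphere $S^3\subset\CC^2$ by the above action, since a deleted neighbourhood of $x$ retracts onto it. The action on $S^3$ is not free, so I would invoke Armstrong's theorem: for a finite group acting (properly discontinuously) on a simply connected space, the ordinary fundamental group of the quotient is $\ZZ_m/H$, where $H$ is the (normal, since $\ZZ_m$ is abelian) subgroup generated by all elements having fixed points. Here the elements with fixed points are exactly those fixing a point of one of the two circles $S^3\cap D_i$, namely $\la\xi^{m/m_1}\ra\cup\la\xi^{m/m_2}\ra$, so $H=\la\xi^{m/m_1},\xi^{m/m_2}\ra=\la\xi^{\gcd(m/m_1,\,m/m_2)}\ra$ (both exponents divide $m$). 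Writing $m=m_1m_2d$, which is legitimate since $m_1m_2\mid m$ by coprimality, one computes $\gcd(m/m_1,m/m_2)=\gcd(m_2d,m_1d)=d\gcd(m_1,m_2)=d$, whence $|H|=m/d=m_1m_2$ and $|\pi_1(S^3/\ZZ_m)|=m/|H|=d$, which is the asserted relation $m_1m_2d=m$.

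The routine parts are the identification of the isotropy locus and the two divisibility/coprimality bookkeeping steps. The main obstacle is the fundamental-group computation: one must be careful that the relevant invariant is the ordinary fundamental group of the underlying space of the link, rather than the orbifold fundamental group (which would be all of $\ZZ_m$), and then justify the passage to the quotient via Armstrong's theorem, including the precise determination of exactly which group elements act with fixed points. The degenerate cases where $m_1$ or $m_2$ equals $1$ are subsumed automatically and recover the lens-space computation $\pi_1\cong\ZZ_m$.
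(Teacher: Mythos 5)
Your argument is correct, and the first two thirds (identifying the isotropy locus with the two coordinate axes, computing $m_i=\gcd(j_i,m)$, and deducing $\gcd(m_1,m_2)=1$ from $\gcd(j_1,j_2,m)=1$) coincide with what the paper does in the discussion following the statement. Where you genuinely diverge is the computation of $\pi_1$ of the link. The paper (following \cite{MR}) first quotients by the intermediate subgroup $\la\eta\ra=\la\xi^d\ra\cong\ZZ_{m_1m_2}$, which acts coordinatewise, and uses the homeomorphism $(z_1,z_2)\mapsto(z_1^{m_2},z_2^{m_1})$ to identify $\CC^2/\ZZ_{m_1m_2}$ with $\CC^2$; the residual $\ZZ_d$-action in the new coordinates $(w_1,w_2)$ has weights $(e_1,e_2)$ coprime to $d$, hence is free on $S^3$, so the link is an honest lens space $S^3/\ZZ_d$ and the order of its fundamental group is read off from a free action. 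You instead apply Armstrong's theorem directly to the non-free $\ZZ_m$-action on $S^3$: $\pi_1(S^3/\ZZ_m)\cong\ZZ_m/H$ with $H$ generated by the elements having fixed points, and you correctly identify $H=\la\xi^{m/m_1},\xi^{m/m_2}\ra=\la\xi^{d}\ra$ of order $m_1m_2$, giving quotient of order $d$. Both are valid. Your route is shorter and isolates exactly what the proposition asks for (the order, indeed the isomorphism type $\ZZ_d$, of $\pi_1$ of the underlying space, and you rightly flag that this is not the orbifold fundamental group); its cost is reliance on Armstrong's theorem as a black box. The paper's route is more laborious but yields a stronger structural payoff that is used elsewhere in the article: the explicit identification of the underlying space of the local model as $(\CC/\ZZ_{m_2}\times\CC/\ZZ_{m_1})/\ZZ_d$ and of the link as a concrete lens space, which is what makes statements such as Remark \ref{rem:z2} and the smoothness/singularity analysis of points possible. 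No gaps; the only hypothesis worth stating explicitly if you wrote this up is that $\gcd(e_i,d)=1$ (equivalently, that the elements of $\ZZ_m\setminus H$ act freely on $S^3$) is not actually needed for your count, since Armstrong's theorem does not require the residual action to be free.
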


For an action given by (\ref{action}), we 
set $m_1=\gcd(j_1,m)$, $m_2=\gcd(j_2,m)$. Note that $\gcd(m_1,m_2)=1$, so we can
write $m_1m_2 d=m$, for some integer $d$. 
Put $j_1=m_1 e_1$, $j_2=m_2 e_2$, where $\gcd(e_1,m_2)=\gcd(e_2,m_1)=1$. 
Let $\eta=\xi^d$, so $\la \eta\ra=\ZZ_{m_1m_2}$. The action is given by 
$\eta \cdot (z_1,z_2) = (\exp(2\pi i e_1/m_2) z_1, \exp(2\pi i e_2/m_1)z_2)$. Therefore
$\CC^2/\ZZ_{m_1m_2}\cong \CC/\ZZ_{m_2} \times \CC/\ZZ_{m_1}$, which is homeomorphic to a ball in 
$\CC^2$ via the map $(z_1,z_2)\mapsto (w_1,w_2)=(z_1^{m_2},z_2^{m_1})$. The points of 
$D_1=\{(z_1,0)\}$ and $D_2=\{(0,z_2)\}$ define two surfaces intersecting transversely, and with multiplicities
$m_1,m_2$, respectively. 

Now $\xi$ acts on $\CC^2/\ZZ_{m_1m_2}\cong \CC^2$ by the formula $\xi \cdot(w_1,w_2)=(\xi^{m_2j_1} w_1,\xi^{m_1j_2}w_2)=
(\exp(2\pi ie_1/d) w_1, \exp(2\pi ie_2/d) w_2)$, where $\gcd(e_1,d)=\gcd(e_2,d)=1$. Therefore 
$\CC^2/\la \xi\ra \cong ( \CC/\ZZ_{m_2} \times \CC/\ZZ_{m_1})/\ZZ_d$,
the point $x$ has as link a lens space ${S}^3/\ZZ_d$, and the images of $D_1$ and $D_2$ are the points
with non-trivial isotropy, with multiplicities $m_1,m_2$, respectively. 

The immersion $D_1\subset Z$ is an oriented normalizable suborbifold. This follows 
since $\G<\SO(2)\x \SO(2)=\UU(1)\x \UU(1)<\UU(2)$. The group $F=\ker (\pr_1)=\ZZ_{m_1}$, so
the multiplicity of $D_1$ is $m_1$, that is the normal fiber is $\CC/\ZZ_{m_1}$. The point $x=(0,0)$ has
multiplicity $m_2d$ in the orbifold $D_1$, since its isotropy group is $\G'=\pr_1(\ZZ_m)=\ZZ_{m_2d}$.
Note that there is an exact sequence
 $$
    \begin{array}{ccccc} 
   \ZZ_{m_1} &\to &\ZZ_m &\stackrel{\pr_1}{\too} & \ZZ_{m_2d} \\
    \wedge & &\wedge & &\wedge \\
   \{1\}\x \UU(1) & \to &\UU(1)\x \UU(1) &\to & \UU(1) \x \{1\}
   \end{array}
 $$
The extension class of the above diagram is controlled by a map $\ZZ_{m_2d}\to \UU(1)/\ZZ_{m_1}$.

\begin{remark} \label{rem:z2}
 Suppose that the fiber is $V=\CC$ and that the isotropy points $x\in D$ are of order $2$. Then 
 there are two maps $\ZZ_2 \to \UU(1)$. The trivial map corresponds to the fact that $D'=\pi^{-1}(x)$
 is an isotropy surface of order $2$ and $D$ intersects it transversally. In particular, the orbifold is smooth.
 The map $\ZZ_2 \to \UU(1)$ sending the generator to $-1$ corresponds to a point $x$ with model
 $\CC^2/\pm 1$, which is an ordinary double point singularity, and $D$ is a surface through it.
\end{remark}

\subsection{Singular symplectic $4$-manifolds}
In \cite{MRT} we constructed smooth cyclic $4$-orbifolds starting with a symplectic $4$-manifold with embedded
symplectic surfaces intersecting symplectically orthogonally. Here we shall extend the result to construct
non-smooth cyclic orbifolds starting with a cyclic singular symplectic $4$-manifold.

\begin{definition}
A \emph{cyclic singular symplectic} $4$-manifold is a symplectic cyclic $4$-orbifold $X$ 
whose isotropy set is of dimension zero (that is, a finite set $P$ of points, called the singular set).
\end{definition}

For a cyclic singular symplectic $4$-manifold, a \emph{singular point} is an isolated isotropy point
$x\in P\subset X$. A local model around $x$ is of the form $\CC^2/\ZZ_d$, where $\xi=e^{2\pi i/d}$ acts
as 
 \begin{equation}\label{eqn:Zd}
  \xi\cdot (z_1,z_2)=(\xi^{e_1} z_1, \xi^{e_2} z_2), 
  \end{equation}
where $\gcd(e_1,d)=\gcd(e_2,d)=1$. We will denote $d(x)=d$.

First fix some further notation. A \emph{sing-symplectic surface} is a symplectic $2$-orbifold $D\subset X$ such
that if $x\in D$ is a singular point, then $D$ is fixed by $\G_x$. Two sing-symplectic surfaces $D_1,D_2\subset X$
\emph{intersect nicely} if at every intersection point $x\in D_1\cap D_2$ there are adapted
Darboux coordinates $(z_1,z_2)$ at $x$ such that $D_1=\{(z_1,0)\}$ and $D_2=\{(0,z_2)\}$ in a model
$\CC^2/\ZZ_m$, where $\ZZ_m<\UU(2)$. Therefore, if the point $x\in X$ is smooth, we recover the
notion that $D_1,D_2$ intersect symplectically orthogonally and positively.

\begin{proposition} \label{prop:smooth->orb}
 Let $X$ be a cyclic singular  $4$-manifold with set of singular points $P$. Let $D_i$ be 
 embedded sing-symplectic surfaces intersecting nicely, and 
take coefficients $m_i>1$ such that
$\gcd(m_i,m_j)=1$ if $D_i$, $D_j$ intersect. Then there is an orbifold $X$ with isotropy surfaces $D_i$ of
multiplicities $m_i$, and singular points $x\in P$ of multiplicity $m=d\prod_{i\in I_x} m_i$, $d=d(x)$, $I_x=\{ i\, | \, x\in D_i\}$.
\end{proposition}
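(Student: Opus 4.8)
The plan is to keep the underlying topological space of $X$ unchanged and to build a new orbifold atlas that inserts cyclic isotropy of order $m_i$ transverse to each $D_i$, together with a compatible symplectic form. First I would stratify the points of $X$ into the cases that can occur: points off $\bigcup_i D_i$ (either smooth, where we keep the trivial chart, or a singular point $x\in P$ with model $\CC^2/\ZZ_{d}$, which we leave untouched, consistent with $I_x=\emptyset$); regular points of a single $D_i$ away from $P$ and from the other surfaces; transverse intersection points $D_i\cap D_j$ at a smooth point of $X$; and singular points $x\in P$ lying on one or two of the $D_i$. By Proposition~\ref{prop:models} and the hypotheses that the $D_i$ intersect nicely with $\gcd(m_i,m_j)=1$, at most two surfaces pass through any point, so this list is exhaustive. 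Near each $D_i$ I would invoke Proposition~\ref{prop:15}, Proposition~\ref{prop:16} and Proposition~\ref{prop:Omega-nu}, so that a neighbourhood of $D_i$ is modeled on its symplectic normal orbiline bundle $\nu_{D_i}$ with the standard form $\O_{\nu_{D_i}}=\pr^*\o_{D_i}+\tfrac{i}{2}\,dz\wedge d\bar z$.

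The new charts are then defined as follows. Over a regular point of a single $D_i$ I take the chart $\tilde U=\tilde U'\x\CC$, with $\tilde U'\subset D_i$ and normal coordinate $z$, let $\ZZ_{m_i}=\la\xi\ra$ ($\xi=e^{2\pi i/m_i}$) act by $\xi\cdot(w,z)=(w,\xi z)$, and take the branched projection $(w,z)\mapsto(w,z^{m_i})$ onto the ambient neighbourhood; this gives $D_i$ multiplicity $m_i$. At a smooth transverse intersection $D_i\cap D_j$ I take $\tilde U=\CC^2$ with $\ZZ_{m_i}\x\ZZ_{m_j}=\ZZ_{m_im_j}$ (using $\gcd(m_i,m_j)=1$) acting diagonally on the two normal lines, giving multiplicity $m_im_j$. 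The decisive case is a singular point $x\in P$ with model \eqref{eqn:Zd}, where $\xi_d\cdot(z_1,z_2)=(\xi_d^{e_1}z_1,\xi_d^{e_2}z_2)$ ($\xi_d=e^{2\pi i/d}$) and $D_i,D_j$ are the two coordinate axes; here I want $\ZZ_m$, $m=d\,m_im_j$, acting by $\z\cdot(z_1,z_2)=(\z^{a_1}z_1,\z^{a_2}z_2)$ with $\z=e^{2\pi i/m}$, such that $\gcd(a_1,m)=m_i$, $\gcd(a_2,m)=m_j$, and such that the quotient by $\ZZ_{m_im_j}=\la\z^{d}\ra$ carries, in the coordinates $(z_1^{m_j},z_2^{m_i})$, the residual $\ZZ_d$-action equal to \eqref{eqn:Zd}. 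Writing $a_1=m_i\tilde e_1$, $a_2=m_j\tilde e_2$, these conditions read $\tilde e_1\equiv e_1$, $\tilde e_2\equiv e_2\pmod d$ and $\gcd(\tilde e_1,m_jd)=\gcd(\tilde e_2,m_id)=1$; since $\gcd(e_k,d)=1$, Dirichlet's theorem provides $\tilde e_k$ in the progression $e_k+d\ZZ$ prime to the relevant modulus, and then $\gcd(m_i,m_j)=1$ forces $\gcd(a_1,a_2,m)=1$, so the action is effective. The single-surface and $d=1$ subcases are obtained by setting the absent data trivial, and one checks the new charts glue with the ambient smooth atlas off the isotropy locus.

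For the symplectic form I would proceed exactly as in the subsection on reversing the normal bundle and in Proposition~\ref{prop:Omega-nu}. In the new charts near $\bigcup_i D_i$ and near $P$ I declare the form to be the standard orbifold form $\hat\o=\pr^*\o_{\bullet}+\tfrac{i}{2}\,dz_k\wedge d\bar z_k$, which is $\G$-invariant and nondegenerate, while away from these loci I keep $\o$. On a thin collar $B_{2\e}(\nu_{D_i})\setminus B_{\e}(\nu_{D_i})$ (and similarly around the singular points) the two forms are cohomologous, since the disc bundle retracts radially onto $D_i$ and both restrict to $\o_{D_i}$ there; writing their difference as $d\mu$ with $|\mu|=O(\e)$ as in \eqref{eqn:b1}, choosing a cutoff $\rho$ equal to $1$ near $D_i$ and $0$ outside $B_{2\e}$ with $|d\rho|=O(\e^{-1})$, and setting $\hat\o=\o-d(\rho\mu)$, I obtain a globally well-defined closed orbifold $2$-form that agrees with the standard model near $D_i$ and with $\o$ outside, and that satisfies $\hat\o^2>0$ once $\e$ is small enough. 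The resulting atlas is cyclic with the stated multiplicities, which completes the construction.

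The main obstacle is the construction and control of $\hat\o$ across the collar: the tautological candidate $\varphi^*\o$ in a branched chart $(w,z)\mapsto(w,z^{m_i})$ is degenerate along $D_i$, the normal part acquiring a factor $|z|^{2(m_i-1)}$, so the symplectic form must genuinely be rebuilt near the isotropy locus and then matched to the fixed form $\o$ farther out while preserving nondegeneracy; the estimates that make this work are the orbifold analogue of those in \eqref{eqn:b1}--\eqref{eqn:b2}. The number-theoretic realization at the points of $P$, namely solving for the exponents $a_1,a_2$ so that both the prescribed multiplicities and the pre-existing $\ZZ_d$-singularity are recovered simultaneously, is the second delicate point, and it is precisely here that the coprimality hypotheses $\gcd(m_i,m_j)=1$ and $\gcd(e_k,d)=1$ are used.
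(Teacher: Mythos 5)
Your construction of the orbifold atlas is correct and follows essentially the same route as the paper: stratify points according to their position relative to $P$ and the $D_i$, keep the original charts away from the isotropy locus, insert a $\ZZ_{m_i}$-branched chart transverse to each $D_i$, a $\ZZ_{m_im_j}$-chart at a smooth intersection point, and a $\ZZ_{dm_im_j}$-chart extending the given $\ZZ_d$-action at each point of $P$. Two remarks. First, your explicit solution for the exponents $a_1,a_2$ (choosing $\tilde e_k\equiv e_k \pmod d$ with $\gcd(\tilde e_1,m_jd)=\gcd(\tilde e_2,m_id)=1$, then checking effectiveness from $\gcd(m_i,m_j)=1$) is a genuine refinement: the paper simply writes the action at a point of $P$ as $\xi\cdot(z_1,z_2)=(e^{2\pi ie_1/m_2d}z_1,e^{2\pi ie_2/m_1d}z_2)$ and defers the issue of choosing representatives of $e_k$ modulo $d$ coprime to the new moduli to Section \ref{sec:loc-inv} and Proposition \ref{prop:acabando}; your CRT/Dirichlet step supplies exactly the coprimality needed for the surfaces to acquire multiplicities $m_i$ (and not a larger divisor of $m$). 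Second, your entire last paragraph on the symplectic form is not part of this proposition --- that is the content of Proposition \ref{prop:orb->symp} --- and where the paper does treat it, it sidesteps your collar interpolation entirely: the chart maps are of the form $re^{2\pi i\theta}\mapsto re^{2\pi im\theta}$ (angle multiplication at fixed radius, not the holomorphic model $z\mapsto z^{m}$), so the pullback of $\omega$ is $m_2\,r_1dr_1\wedge d\vartheta_1+m_1\,r_2dr_2\wedge d\vartheta_2$, which is already smooth and nondegenerate; the $|z|^{2(m_i-1)}$ degeneration you compensate for with a Gompf-style matching only arises for the holomorphic branched cover. Your interpolation could be made to work, but for the statement actually being proved it is unnecessary, and for the symplectic statement the paper's direct formula is simpler.
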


\begin{proof}
We start by fixing a Riemannian metric on a neighbourhood of the points of $P$.
For each $x\in P$ we consider a chart $\CC^2/\ZZ_d$ in such a way that if there are (either one or two)
$D_i$'s going through it, then they are the image of $z_1=0$ or $z_2=0$. We fix the standard metric on
a neighbourhood of $x$. Then we extend it to the whole of $X$.

In \cite[Proposition 4]{MR} we construct an orbifold structure on the points of $X-P$. Let us show how
to construct it around the points of $P$ in such a way that it is compatible with the structure on $X-P$.
If $x\in P$ does not lie in any $D_i$, just consider an orbifold chart $(U, B^4_\epsilon(0),\ZZ_d,\varphi)$, where
the action of $\ZZ_d$ is given as in (\ref{eqn:Zd}).
If $x$ lies in only one $D=D_i$ with $m=m_i$, we construct the orbifold chart as follows. 
We start with the chart $\varphi:B^4_\e(0)\to U$, $B^4_e(0)/\ZZ_d \cong U$, where $D\cap U=\{(z_1,0)\}$.
We write $B_\e^4(0)=B_\e^2(0)\x B_\e^2(0)$, and consider the map
 $$
  \psi:\tilde U=B_\e^2(0)\x B_\e^2(0)  \to U, \quad \psi(z_1,z_2)=\psi(z_1,r_2e^{2\pi i \theta_2})
 =\varphi (z_1, r_2e^{2\pi i m \theta_2})
 $$
and the action of $\xi=e^{2\pi i/md}$ given by $\xi\cdot (z_1,z_2)=(e^{2\pi i e_1/d} z_1, e^{2\pi i e_2/md} z_2)$. This 
gives our chart $(U,\tilde U,\ZZ_{md},\psi)$.

If $x\in P$ lies in the intersection of two surfaces, say $D_1,D_2$, with coefficients $m_1,m_2$, then $\gcd(m_1,m_2)=1$,
by assumption. Take small neighbourhoods $V_1\subset D_1$, $V_2\subset D_2$ of $x$, which we identify
with balls $B^2_\epsilon(0)\subset \RR^2$. Consider a chart 
$\varphi:B_{\epsilon}^4(0)=B_{\epsilon}^2(0)\x B_{\epsilon}^2 (0) \to U$, with $\varphi(0,0)=x$,
$D_1\cap U=\varphi(\{(z_1,0)\})$, $D_2\cap U=\varphi(\{(0,z_2)\})$, 
and let $g$ be the standard metric on $U$. We define the orbifold chart as follows:
consider $\tilde U=B_{\epsilon}^2(0) \x B_{\epsilon}^2(0)$ and
 $$
  \psi: \tilde U \to U, \quad \psi(z_1,z_2) = \psi(r_1e^{2\pi i\theta_1}, r_2e^{2\pi i\theta_2})
 = \varphi(r_1e^{2\pi im_2\theta_1}, r_2e^{2\pi im_1\theta_2}).
 $$
The action of $\ZZ_m$, $m=m_1m_2d$, 
is given by $\xi \cdot (z_1,z_2)=(e^{2\pi i e_1/m_2d} z_1,e^{2\pi i e_2/m_1d}  z_2)$, where $\xi=e^{2\pi i/m}$.
Then $(U,\tilde U, \ZZ_m,\psi)$ is our chart at $x$.

These charts are compatible with the charts constructed in \cite[Proposition 4]{MRT} for
the smooth part of the manifold $X-P$.
\end{proof}

\begin{proposition} \label{prop:orb->symp}
Let $X$ be a singular symplectic cyclic $4$-manifold with sing-symplectic surfaces 
$D_i$ intersecting nicely, and take $m_i>1$ such that
$\gcd(m_i,m_j)=1$ if $D_i, D_j$ intersect. 
Then there is a cyclic symplectic orbifold $X$ with 
isotropy surfaces $D_i$ of multiplicities $m_i$.
\end{proposition}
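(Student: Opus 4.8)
The plan is to equip the orbifold $X$ produced by Proposition \ref{prop:smooth->orb} with a symplectic form obtained by pulling back the given form $\omega$ through the branched-cover chart maps $\psi$ that define the new orbifold structure. First I would observe that away from the surfaces $D_i$ the new orbifold structure coincides with the original one on $X-P$, so there I simply set $\hat\omega:=\omega$ and there is nothing to check. The entire difficulty is concentrated in tubular neighbourhoods of the $D_i$ and at the points of $P$, where the new charts are genuine branched covers.

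The next step is to standardize $\omega$ near the isotropy locus. Each $D_i\subset X$ is a (compact) symplectic normalizable suborbifold, so by Proposition \ref{prop:15} its normal bundle $\nu_{D_i}$ is a symplectic orbiline bundle, and by Proposition \ref{prop:16} a neighbourhood of $D_i$ is symplectomorphic to a neighbourhood of the zero section of $\nu_{D_i}$ carrying the model form $\Omega$ of Proposition \ref{prop:Omega-nu}. After applying this symplectomorphism I may assume that near $D_i$ the form equals $\Omega$, written locally as in (\ref{eqn:b1}) by $\Omega=\omega+r^2\alpha+\beta\wedge r\,dr+r\,dr\wedge d\theta$. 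At a point where two surfaces meet nicely, or at a singular point of $P$ lying on some $D_i$, the definition of nice intersection together with Proposition \ref{prop:12} gives adapted Darboux coordinates in which $\omega=\sum dx_k\wedge dy_k$ and the $D_i$ are coordinate planes; this must be arranged compatibly with the normal form along each branch.

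With $\omega$ in this normal form I would define $\hat\omega$ on each new chart $(U,\tilde U,\ZZ_m,\psi)$ of Proposition \ref{prop:smooth->orb} to be the pullback $\psi^*\omega$. The key computation is that this pullback extends smoothly across the branch locus and stays nondegenerate: near $D_i^\circ$ the map $\psi$ fixes $r$ and multiplies the fibre angle by $m_i$, so the fibre term $r\,dr\wedge d\theta$ becomes $m_i\,r\,dr\wedge d\theta=m_i\,dx_2\wedge dy_2$, nondegenerate at $r=0$; the correction terms of $\Omega$ involve only the combinations $r^2 d\theta=x\,dy-y\,dx$ and $r\,dr=\frac12 d(r^2)$, which pull back to smooth forms vanishing along $D_i$, while the base term is unchanged. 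At an intersection or singular chart the same computation applies simultaneously in the two factors, using $\gcd(m_i,m_j)=1$. The resulting $\hat\omega$ is $\ZZ_m$-invariant (the cover $\psi$ is $\ZZ_m$-invariant by construction) and closed (a pullback of a closed form). Since all the new charts are branched covers of charts of the single form $\omega$, the locally defined forms $\psi^*\omega$ are automatically compatible with the orbifold changes of charts and agree with $\hat\omega=\omega$ on the overlaps with $X-P$, so they glue to a global symplectic orbifold form, as required.

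The main obstacle is precisely the smoothness and nondegeneracy of $\psi^*\omega$ across the branch locus $D_i$: the radius-preserving angular cover underlying $\psi$ is only a homeomorphism, not a smooth map, so pulling back an \emph{arbitrary} smooth form would not yield a smooth form at $D_i$, and only the special shape of $\Omega$ in (\ref{eqn:b1}) guarantees a smooth, nondegenerate pullback. This is why the standardization step is essential, and also why one must verify that the normal forms along the various $D_i$ can be chosen consistently at the intersection and singular points. If one preferred not to standardize $\omega$ exactly, the alternative is to keep $\psi^*\omega$ only on the overlap annulus and interpolate it with the standard model near $D_i$ by the primitive-plus-cutoff estimate used in the subsection on reversing the normal bundle, where $|\mu|=O(\e^2)$ and $|d\rho|=O(\e^{-1})$ keep the interpolated form symplectic for small $\e$; the delicate point there is again preserving nondegeneracy, and at the intersection points ensuring that the contributions of the two surfaces combine to a nondegenerate form.
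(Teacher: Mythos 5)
Your proposal is correct and follows essentially the same route as the paper: both standardize $\omega$ near the isotropy locus (the paper via Darboux charts at the singular points and the almost K\"ahler normal form of \cite[Proposition 7]{MRT} along the $D_i$, you via Propositions \ref{prop:15}, \ref{prop:16} and the model form of Proposition \ref{prop:Omega-nu}) and then define $\hat\omega=\psi^*\omega$ on the branched-cover charts, with the key computation $r\,dr\wedge d\theta\mapsto m\,r\,dr\wedge d\vartheta$ giving smoothness and nondegeneracy across the branch locus; the paper's explicit formula $\hat\omega=m_2 r_1\,dr_1\wedge d\vartheta_1+m_1 r_2\,dr_2\wedge d\vartheta_2$ at points of $P$ is exactly this pullback. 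You also correctly identify why the standardization step is indispensable, which is the real content of the argument.
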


\begin{proof}
Let $P\subset X$ be the finite set of singular points of $X$. The argument of \cite[Proposition 7]{MRT}
shows how to construct an orbifold symplectic form on $X-P$. We want to extend the argument to the
points of $P$.
For each singular point $p\in P$, we do as follows. If it does not belong to any $D_i$, we just take a 
Darboux chart $\varphi:B^4_\epsilon(0)\to U=B^4_\epsilon(0)/\ZZ_m$. If there is one $D_i$ through $p$, we take a
Darboux chart $\varphi:B^4_\epsilon(0)\to U=B^4_\epsilon(0)/\ZZ_m$ so that $D_i=\varphi(\{(z_1,0)\}$.
If $p$ lies at an intersection $D_i\cap D_j$, fix a Darboux chart $\varphi: B^2_{\epsilon}(0) \x
B^2_{\epsilon}(0)\to U$ with $D_i\cap U=\varphi(\{(z_1,0)\}$, 
$D_j\cap U=\varphi(\{(0,z_2)\}$. 
Take a standard metric on $U$, and the corresponding almost complex structure $J_U$ on $U$. 
The rest of the procedure is as in \cite[Proposition 7]{MRT}. We extend $J_U$ to 
compatible almost complex structures $J_i$ on each $D_i$, then take Riemannian metrics
on the normal bundles $\nu_{D_i}$ compatible with its symplectic structure, and extend this metric $g$ 
to the whole of $X$ compatible with the symplectic form. 
This produces an orbifold almost K\"ahler structure on the whole of $X$ for which each $D_i$ is a $J$-invariant surface.

Now we use this metric $g$ for producing the atlas of Proposition \ref{prop:smooth->orb} that gives
$X$ the structure of a cyclic orbifold. Let us now construct the orbifold symplectic form. We only have to do it
around the points of $P$, since on $X-P$ the construction is given in \cite[Proposition 7]{MRT}.

Let $x\in P$, and let $U$ be a neighbourhood of $x$ as above. Take coordinates $(w_1,w_2)$,
$w_1=r_1e^{2\pi i\theta_1}$, 
$w_2=r_2e^{2\pi i\theta_2}$, for the singular symplectic
manifold $X$, and so the orbifold coordinates are 
$z_1=r_1e^{2\pi i\vartheta_1}$,  $z_2=r_2e^{2\pi i\vartheta_2}$, with $\theta_1=m_2\vartheta_1$, $\theta_2=m_1
\vartheta_2$ (it may be $m_1$ or $m_2$ equal to $1$, in case there are less than two sing-symplectic surfaces through $p$). 
Thus $\omega=r_1\, dr_1\wedge d\theta_1+r_2\, dr_2\wedge d\theta_2$. We set
 $$
  \hat\omega= m_2 r_1\, dr_1\wedge d\vartheta_1+ m_1 r_2\, dr_2\wedge d\vartheta_2\, ,
 $$
which defines an orbifold symplectic form on $U$. This pastes well with the orbifold 
symplectic form $\hat\omega$ constructed for $X-P$ in \cite[Proposition 7]{MRT}.
\end{proof}

\subsection{Local invariants} \label{sec:loc-inv}
Take a singular point $x\in X$ with local invariants ${\mathbf{j}}_x=(m_1m_2d,j_1,j_2)$ with $j_1=m_1e_1,
j_2=m_2e_2$, where $\gcd(e_1,m_2d)=\gcd(e_2,m_1d)=1$. Here $e_1,e_2 \pmod{d}$.
Let $D_1=\{(z_1,0)\}$ be one of the isotropy surfaces, with coefficient $m_1$. 
This is a symplecticcally normalizable suborbifold. Its symplectic normal orbivector bundle has fiber $\CC/\ZZ_{m_1}$.
The action is
given by the group $\la\eta\ra=\la \xi^{m_2d}\ra\cong \ZZ_{m_1}$, since
 $$
  \eta\cdot (z_1,z_2)= (\xi^{j_1m_2d} z_1, \xi^{j_2 m_2d} z_2)=(z_1 , e^{2\pi i j_2/m_1}z_2).
  $$
Therefore the local invariant of $D_1$ is ${\mathbf{j}}_{D_1}=(m_1,j_2)$, where $j_2 \pmod{m_1}$.
This gives the \emph{compatibility conditions} of the local invariants for singular points and isotropy surfaces.

\begin{definition}
 Let $X$ be a cyclic $4$-orbifold with singular points $P$ and isotropy surfaces $D_i$, $i\in I$.
 We say that $\{{\mathbf{j}}_{x},{\mathbf{j}}_{D_i} \,| \, x\in P, i\in I\}$ are local invariants for $X$
 if they satisfy the compatibility conditions above.
\end{definition}

\begin{proposition} \label{prop:acabando}
Suppose that $X$ is a symplectic $4$-orbifold and the isotropy surfaces $D_i$ are disjoint. Take integers
$j_i$ with $\gcd(m_i,j_i)=1$ for each $D_i$. Then there exist local invariants for $X$.
\end{proposition}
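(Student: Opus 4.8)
The plan is to reduce the existence of local invariants to a pointwise arithmetic condition, exploiting the fact that disjointness of the $D_i$ means no intersection‑compatibility between two surfaces ever has to be imposed. First I would classify the singular points. By Proposition \ref{prop:models} together with the disjointness hypothesis, every $x\in P$ lies on at most one isotropy surface, so there are exactly two kinds of singular points: those lying off all the $D_i$, with a purely isolated model $\CC^2/\ZZ_{d}$, and those lying on a single surface $D_i$, with model $\CC^2/\ZZ_{m_id}$ where $d=d(x)$ is the order of the link. The data to be produced are the surface invariants $\mathbf{j}_{D_i}=(m_i,j_i)$, which are legitimate since $\gcd(m_i,j_i)=1$ by hypothesis, together with a choice of $\mathbf{j}_x$ at each $x\in P$ that restricts correctly along any surface through $x$.

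For an isolated singular point off the surfaces there is no compatibility to satisfy, so any admissible $\mathbf{j}_x=(d,e_1,e_2)$ with $\gcd(e_1,d)=\gcd(e_2,d)=1$ works. The only real content is at a point $x$ lying on a single $D_i$. Following the computation of Section \ref{sec:loc-inv} with $m_2=1$, I would write $\mathbf{j}_x=(m_id,\,m_ie_1,\,e_2)$ and recall that its restriction to $D_i$ is $(m_i,\,e_2\bmod m_i)$. Hence the compatibility condition is exactly $e_2\equiv j_i\pmod{m_i}$, together with the admissibility conditions $\gcd(e_1,d)=1$ and $\gcd(e_2,m_id)=1$; the first is met by taking $e_1=1$, and the whole problem comes down to lifting the residue $j_i\bmod m_i$ to an integer $e_2$ with $\gcd(e_2,m_id)=1$.

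This lifting is the step I expect to be the crux. Since $\gcd(j_i,m_i)=1$, any $e_2\equiv j_i\pmod{m_i}$ automatically satisfies $\gcd(e_2,m_i)=1$, so it remains only to arrange $\gcd(e_2,d)=1$. I would argue prime by prime over $p\mid d$: if $p\mid m_i$ then $p\nmid j_i$ and reduction mod $p$ factors through reduction mod $m_i$, forcing $e_2\equiv j_i\not\equiv 0\pmod p$ for free; if $p\nmid m_i$, then as $e_2$ runs through the progression $j_i+k\,m_i$ its residue mod $p$ takes every value, so a nonzero one can be chosen. Combining the finitely many conditions $e_2\not\equiv 0\pmod p$ (for $p\mid d$, $p\nmid m_i$) with the fixed class $e_2\equiv j_i\pmod{m_i}$ via the Chinese Remainder Theorem (the relevant moduli being coprime) produces the desired $e_2$, and hence a compatible $\mathbf{j}_x$.

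Finally I would assemble these choices: setting $\mathbf{j}_{D_i}=(m_i,j_i)$ and taking the $\mathbf{j}_x$ just constructed yields a family $\{\mathbf{j}_x,\mathbf{j}_{D_i}\}$ satisfying all compatibility conditions, so it is a set of local invariants for $X$; if one wants to realize this data geometrically, Propositions \ref{prop:smooth->orb} and \ref{prop:orb->symp} then build an actual cyclic symplectic orbifold carrying it. The essential point — and the only place where the hypotheses $\gcd(m_i,j_i)=1$ and the disjointness of the $D_i$ are used — is the arithmetic lift above: disjointness guarantees that each $\mathbf{j}_x$ is constrained by at most one surface residue, so two incompatible requirements $e_2\equiv j_i$ and $e_2\equiv j_{i'}$ on the same point never collide.
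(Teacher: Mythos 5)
Your reduction to a pointwise arithmetic problem, and the lifting of $j_i \bmod m_i$ to a residue $e_2$ with $\gcd(e_2,m_id)=1$ via a prime-by-prime analysis and the Chinese Remainder Theorem, are correct; this is essentially equivalent to the paper's explicit choice $j_2=j+nx$ with $x=\Pi\, p_i/\gcd(\Pi\, p_i,\Pi\, q_j)$. However, there is a genuine gap in your treatment of $e_1$: you cannot simply set $e_1=1$. The singular point $x$ of the given singular symplectic $4$-manifold already carries a fixed local model $\CC^2/\ZZ_d$ with weights $(e_1,e_2)$ as in (\ref{eqn:Zd}), and the only freedom is a change of generator of $\ZZ_d$, which rescales both weights simultaneously; hence the ratio $e=e_1e_2^{-1}\in\ZZ_d^*$ is an invariant of $x$. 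The new local invariant $\mathbf{j}_x=(m_id,\,m_ie_1,\,e_2)$ must induce, after quotienting its model by the subgroup $\ZZ_{m_i}$ acting on the normal direction to $D_i$, exactly this pre-existing $\ZZ_d$-action; the computation of Section \ref{sec:loc-inv} shows the induced weights are $(e_1\bmod d,\, e_2\bmod d)$, so one is forced to take $e_1\equiv e\,e_2\pmod d$. This is precisely why the paper's proof first normalizes the generator so that the model reads $\xi'\cdot(z_1,z_2)=(\xi^{e}z_1,\xi z_2)$ and then sets $e_1=e\cdot e_2$. With $e_1=1$ your data would in general describe a different quotient singularity from the one $X$ actually has at $x$, so it would not be a collection of local invariants \emph{for $X$}.

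The repair is immediate: keep your CRT-constructed $e_2$ and define $e_1:=e\,e_2$. Since $e\in\ZZ_d^*$ and $\gcd(e_2,d)=1$, the required condition $\gcd(e_1,d)=1$ holds automatically, and every other verification you perform goes through unchanged; with this one-line fix your argument coincides in substance with the paper's.
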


\begin{proof}
We only need to see that if $D$ is an isotropy surface with local invariants ${\mathbf{j}}_{D}=(n,j)$,
and $x\in D$ is a singular point with $d=d(x)$, then we can assign local invariants 
${\mathbf{j}}_x=(m_1m_2d,j_1,j_2)=(nd,j_1,j_2)$ in a compatible way. 

First note that in the local model (\ref{eqn:Zd}) we can use the
generator $\xi'=\xi^{e_2}$ instead of $\xi$. That means that we can there is a model
around $x$ of the form $\CC^2/\ZZ_d$ with $D_1=\{(z_1,0)\}$ and
$\xi'\cdot (z_1,z_2)=(\xi^{e}  z_1, \xi z_2)$, where $e=e_1e_2^{-1}\in \ZZ_d^*$.

We take $d=\Pi \,  p_i^{a_i}$, $j=\Pi \, q_j^{b_j}$ the decomposition on prime numbers. Let 
 \begin{align*}
 m_1 &= n,   && m_2=1, \\
 j_2 &= j+n x, && \text{where $x=\Pi \, p_i/\gcd(\Pi \, p_i,\Pi \,  q_j)$}, \\
 e_2 &= j_2 , && e_1 = e\cdot e_2,  \qquad \qquad \qquad  j_1 = n e_1\, . 
 \end{align*}
Clearly $j_1=m_1e_1$, $j_2=m_2e_2$, $\gcd(e_1,m_2)=1$, $\gcd(e_2,m_1)=\gcd(j_2,n)=\gcd(j,n)=1$.
Now let us see that $\gcd(j_2,d)=1$. Take a prime $p|d$. If $p|j$ then $p\not| x$ and $p\not| j_2$; if
$p\not| j$ then $p|x$ and $p\not|j_2$. Now $\gcd(j_1,m_1d)=\gcd(m_1e_1,m_1d)=m_1
\gcd(ee_2,d)=m_1\gcd(j_2,d)=m_1$, and $\gcd(j_2,m_1d)=1=m_2$. These are all the conditions
to match the cyclic orbifold singularity. 

The compatibility condition reads as $m_1=n$, $j_2\equiv j \pmod{m_1}$  
and $j_1=m_1$, $j_2=e_2$, $e_1=e_2e$, and is clearly satisfied.
\end{proof}

\section{A cyclic symplectic $4$-orbifold with disjoint symplectic surfaces} \label{sec:5}

\subsection{First building block}
Consider a genus $2$ complex surface $\S_2$ with an involution $\s:\S_2 \to \S_2$ with two fixed points.
Then the quotient $C=\S_2 /\la \s\ra$ is a complex torus and the projection $\pi:\S_2  \to C$ is 
a degree $2$ ramified covering, ramified over two points $p_1,p_2$. Second, consider a complex torus $\S_1$ with
an involution $\tau: \S_1\to \S_1$ with four fixed points. The quotient $C'=\S_1/\la \tau\ra$ is a $2$-sphere and
the projection $\pi':\S_1 \to C'$ is a degree $2$ ramified covering with four ramification points
$q_1,q_2,q_3,q_4$. Then take 
  $$
  Y= (\S_2 \x \S_1) /\la \s\x\tau\ra,
  $$
which is a singular complex surface with $8$ double points of order $2$, namely $(p_i,q_j)$, $i=1,2$, $j=1,2,3,4$.
Denote $\varpi: \S_2\x \S_1\to Y$ the projection.

We have induced projections $\pi: Y \to C$, $\pi': Y \to C'$. The generic fibers of $\pi$ are torus 
$T_p=\pi^{-1}(p)=\{p\}\x \S_1$. There are two exceptional fibers,
  $$
  S_i=\pi^{-1}(p_i)= (\{p_i\}\x \S_1)/\la\tau\ra ,  \quad i=1,2,
  $$
which are spheres for $i=1,2$. The generic fibers of $\pi'$ are genus $2$ surfaces $\S_q=(\pi')^{-1}(q)=\S_2\x\{q\}$,
except for
  $$
  T_j=(\pi')^{-1}(q_j)=(\S_2 \x \{q_j\})/\la \s\ra, \quad j=1,2,3,4,
  $$
which are tori. Note that the spheres $S_i$ and the tori $T_j$ intersect at the singular points 
$S_i\cap T_j=(p_i,q_j)$. Also $T_p\cap T_j$ is a transverse intersection at one point in the smooth locus.
The intersection $T_p\cap \S_q$ is however two points.

\begin{figure}[H]
\begin{center}
\includegraphics[width=10cm]{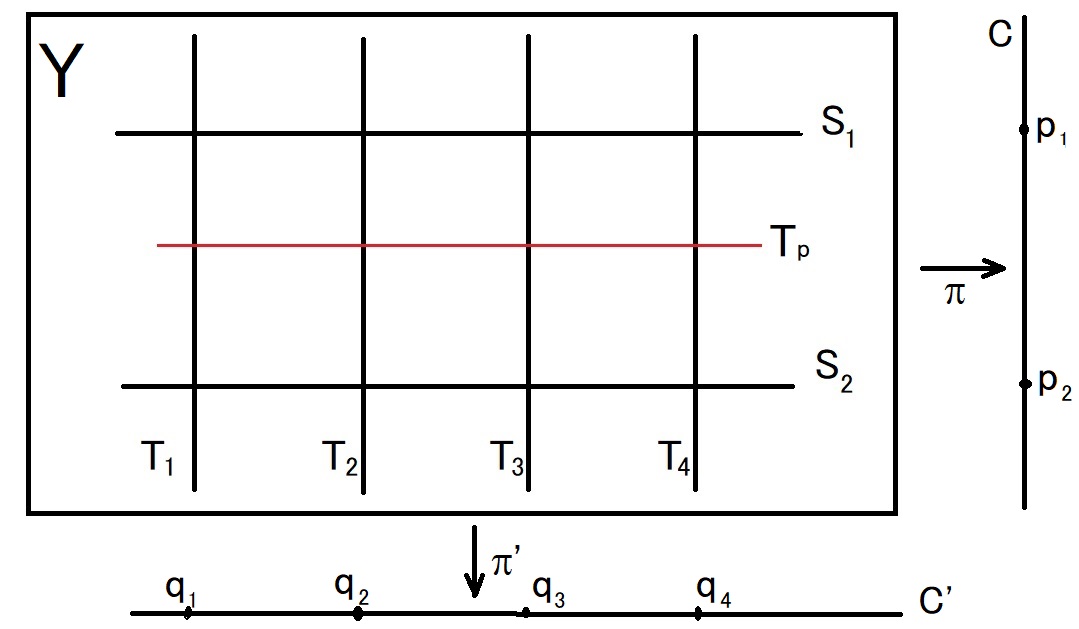}
\end{center}
\end{figure}

Let us compute the homology of $Y$.
The homology of $\S_2$ is given by $H_1(\S_2,\ZZ)=\la a_1,a_2,b_1,b_2\ra$, and the homology
of $\S_1$ given by 
 \begin{equation}\label{eqn:S11}
 H_1(\S_1,\ZZ)=\la \a,\b\ra.
 \end{equation}
 Then $\tau(\a)=-\a$ and $\tau(\b)=-\b$,
and $\s(a_1)=a_2,\s(b_1)=b_2$. So the homology of $Y$ is the coinvariant part
of the cohomology of $\S_2 \x \S_1$, that is 
 $$
  H_1(Y,\ZZ)=\la a_1,b_1\ra.
  $$
The integral homology is generated by loops $a_1,b_1$ inside a fiber $T_1$.
Note also that the loops $\a,\b$ in a fiber $T_p$ can be pushed to a special fiber $S_i$, and then
contracted there. Furthermore, the loops $\a,\b$ can be contracted in $Y-T_1$.

For the $2$-homology, we have $H_2(\S_2\x \S_1,\ZZ)= \la \S, T\ra \oplus (H_1(\S)\ox H_1(T))$. Hence
  $$
  H_2(Y,\ZZ)=\la [T_1], [S_1], a_1\x \a, b_1\x\a, a_1\x \b,b_1\x \b\ra,
  $$
where $a_1\x \a=-a_2\x\a$, etc. Note that $2[T_j]=[\S_q]$ and $2[S_i]=[T_p]$ in homology.
This implies that the $2$-homology of $Y$ is generated
by the fibers of the projections $\pi,\pi'$ plus four extra tori 
 $$
  U_1=a_1\x \a, \, U_2= b_1\x \b, \,  U_3= b_1\x\a, \, U_4= a_1\x \b,
 $$
which intersect in pairs $U_1\cdot U_2=1$, $U_3\cdot U_4=1$, with all other intersections $U_i\cdot U_j=0$.

Therefore the Betti numbers of $Y$ are $b_0=1$, $b_1=2$, $b_2=6$, and the Euler characteristic
$\chi(Y)=4$. This agrees with the fact that $\pi: \S_2\x \S_1\to Y$ is a double covering ramified at $8$ points
of multiplicity $2$, hence $\chi(\S_2\x \S_1)=2\chi(Y)-8=0$.

We make the tori $U_1,U_2,U_3,U_4$ symplectic by using the following result:

\begin{lemma}[{\cite[Lemma 27]{MRT}}]\label{lemma:lagr-sympl} 
Let $(M,\omega)$ be a $4$-dimensional  compact symplectic manifold. 
Assume that  $[F_1],\ldots ,[F_k] \in H_2(M,\ZZ)$ are linearly independent homology classes represented by 
$k$ Lagrangian surfaces $F_1,\ldots F_k$ which intersect transversely and not three of them intersect in
a point. Then there is an arbitrarily small perturbation $\omega''$ of the symplectic form $\omega$ such that 
all $F_1,\ldots,F_k$ become symplectic.
\end{lemma}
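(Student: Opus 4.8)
The plan is to reduce the statement to the construction of a single closed $2$-form $\eta$ on $M$ that restricts to a strictly positive area form on every $F_i$ simultaneously, after fixing an orientation and a positive area form $\sigma_i$ on each $F_i$. Given such an $\eta$, I would set $\omega''=\omega+\epsilon\eta$. Since nondegeneracy is an open condition and $M$ is compact, $\omega''$ is symplectic for all sufficiently small $\epsilon>0$ and is arbitrarily close to $\omega$; moreover $\omega''$ is closed because $\eta$ is. As each $F_i$ is Lagrangian we have $\omega|_{F_i}=0$, hence $\omega''|_{F_i}=\epsilon\,\eta|_{F_i}>0$ and every $F_i$ becomes symplectic. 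Thus the entire problem is the construction of $\eta$.

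I would build $\eta$ from closed local models that are positive on the sheets they meet. Away from the intersection points, a neighbourhood of $F_i$ is symplectomorphic to a neighbourhood of the zero section of $T^*F_i$ by the Weinstein Lagrangian neighbourhood theorem; the pull-back $p^*\sigma_i$ under the bundle projection $p$ is then closed and restricts to $\sigma_i>0$ along $F_i$. At an intersection point of $F_i$ and $F_j$, which by the hypothesis that no three surfaces meet at a point involves exactly two sheets, I would use the Darboux normal form for a transverse pair of Lagrangian planes: coordinates $(x_1,y_1,x_2,y_2)$ with $\omega=dx_1\wedge dy_1+dx_2\wedge dy_2$, in which $F_i=\{y_1=y_2=0\}$ and $F_j=\{x_1=x_2=0\}$. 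Here $dx_1\wedge dx_2$ is closed, positive on $F_i$ and identically zero on $F_j$, while $dy_1\wedge dy_2$ is closed, positive on $F_j$ and zero on $F_i$, so the positivity requirements for the two sheets decouple at every intersection.

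The essential global input is the linear independence of the classes. Since $[F_1],\ldots,[F_k]$ are linearly independent in $H_2(M;\RR)$, the map $\bigoplus_i H_2(F_i;\RR)\to H_2(M;\RR)$ sending each fundamental class to $[F_i]$ is injective, whence the dual restriction map $H^2(M;\RR)\to\bigoplus_i H^2(F_i;\RR)$ is surjective. There is therefore a global closed $2$-form $\Theta$ with $[\Theta|_{F_i}]=[\sigma_i]$ for every $i$. I would then upgrade positive periods to pointwise positivity: on each surface $\Theta|_{F_i}=\sigma_i+d\beta_i$ for some $1$-form $\beta_i$ on $F_i$; extending the $\beta_i$ to $M$ by a cut-off and subtracting the exact forms $d\tilde\beta_i$ produces a closed form whose restriction to $F_i$ equals $\sigma_i>0$. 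The intersections are controlled by choosing the $\sigma_i$ to agree with the two-sheet Darboux models of the previous paragraph near each intersection point and by extending each $\tilde\beta_i$ so that it vanishes near the transverse sheet $F_j$; because no three surfaces meet, only pairwise interactions occur and these local adjustments do not conflict. Assembling the corrected closed forms yields $\eta$ with $\eta|_{F_i}>0$ for all $i$.

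The main obstacle is exactly this passage from positive periods to pointwise positivity while keeping the form closed and doing so for all $F_i$ at once. A purely local cut-off construction does not suffice, because for a torus the Euler number of $T^*\TT^2$ vanishes and one cannot make $p^*\sigma_i$ both closed and compactly supported; this is why the cohomological step through linear independence is indispensable rather than a convenience. The delicate bookkeeping is ensuring that the exact correction rendering $\Theta|_{F_i}$ positive does not destroy the positivity of $\Theta|_{F_j}$ at a shared point, and it is precisely here that transversality of the intersections and the absence of triple points are used.
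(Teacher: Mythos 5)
Your proposal is correct and follows essentially the same route as the source: the paper itself gives no proof, quoting the lemma from \cite[Lemma 27]{MRT}, and the argument there (going back to Gompf's Lemma 1.6) is exactly your scheme --- use the linear independence of the $[F_i]$ to produce a closed $2$-form whose periods on all $F_i$ are positive, correct it by cut-off exact terms (normalized in two-sheet Darboux models at the pairwise intersection points) to make it pointwise positive on each $F_i$, and add a small multiple of it to $\omega$. Your remark that a compactly supported local construction must fail for Lagrangian tori, so that the cohomological input is genuinely needed, is also the right explanation of why the independence hypothesis appears.
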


We are going to consider $S_1, S_2$ and a collection of eleven generic fibers, eight of which we denote 
$T_{p_i}=\pi^{-1}(p_i)$, for distinct points $p_i\in C-\{p_1,p_2\}$, $i=3,\ldots, 10$, 
and another three $T_{p_1'},T_{p_1''},T_{p_2'}$ for another three points $p_1',p_1'',p_2'$.
All of them are tori.


\subsection{Second building block}
We start with the complex manifold $X=\CP^2$ with a smooth complex cubic curve $C\subset \CP^2$ 
and two complex lines $L, L'$ intersecting at a point
$s_0\in C$. Each intersects $C$ at another two points.
Blow-up at $s_0$ and we obtain an exceptional divisor $E$. We denote again by $L,L',C$ the
proper transforms of $L,L',C$ in the blow-up $X'$. Now we blow-up at the point of intersection $E\cap L$. 
The resulting manifold $X''$ has a new exceptional divisor
$E'$, and we denote by $E,L,L',C$ the proper transforms of the respective curves in $X'$. Note
that now $E^2=-2$.

Next we blow-down $E$. The blow-down of a $(-2)$-rational curve is a singular double point modelled
on $\CC^2/\ZZ_2$, with the action $(z_1,z_2)\to (-z_1,-z_2)$. 
Call $X'''$ the resulting singular complex surface. 
The proper image of $L,L',C$ in $X'''$ will be called in the same way. We discard the image of $E'$
now. The curves $L',C$ intersect at the double point. Let us see that the intersection is nice.

\begin{lemma} \label{lem:symplectic orthogonal}
Let $(X,\omega)$ be a cyclic singular symplectic $4$-manifold, and suppose that $S, S' \subset X$ are sing-symplectic surfaces 
intersecting transversely and positively at a singular point $p\in X$ whose local
model is $\CC/\ZZ_d$ with local invariants (\ref{eqn:Zd}) with $e_1=e_2$. Then we can perturb $S$ near $p$ 
so that $S$ and $S'$ intersect nicely.
\end{lemma}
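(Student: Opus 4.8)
The plan is to localise the statement near $p$, pass to the model $\CC^2/\ZZ_d$, and there perform the standard symplectic orthogonalisation of two transversally intersecting surfaces. The whole point of the hypothesis $e_1=e_2$ is that it makes the isotropy act by \emph{scalars}, hence centrally, so that the orbifold version reduces to the smooth computation (as in \cite{MRT}) carried out $\ZZ_d$-equivariantly.

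First I would normalise the local model. Since $\gcd(e_1,d)=1$ and $e_1=e_2=:e$, replacing the generator $\xi$ by $\xi^{\bar e}$ with $\bar e e\equiv 1\pmod d$ lets us assume the action is $\zeta\cdot(z_1,z_2)=(\zeta z_1,\zeta z_2)$, $\zeta=e^{2\pi i/d}$. This scalar action lies in the centre of $\UU(2)$, so it commutes with every linear symplectomorphism and is preserved by every radial construction; combined with averaging over the finite group $\ZZ_d$, this ensures each step below can be made $\ZZ_d$-equivariant. As $S,S'$ are sing-symplectic surfaces fixed by the isotropy, they lift to $\ZZ_d$-invariant symplectic surfaces $\tilde S,\tilde S'$ through $0\in\CC^2$, meeting transversally and positively.

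Next I would straighten $S'$ and write $S$ as a graph. Invoking the adapted Darboux chart of Proposition \ref{prop:12} (and the orbifold Darboux theorem \cite[Proposition 10]{MR}), choose coordinates in which $\omega=\omega_0$ is standard, $\ZZ_d<\UU(2)$ acts by scalars, and $\tilde S'=\{z_2=0\}$. Transversality then presents $\tilde S$ near $0$ as a graph $z_1=h(z_2)$ with $h(0)=0$; invariance forces $h(\zeta w)=\zeta h(w)$, and writing $A=dh_0$ a short computation shows the symplectic-and-positive condition is $1+\det A>0$, i.e.\ $\det A>-1$. The perturbation is then in two stages. First, deform $h$ to its linear part by the rescaling $h_s(w)=s^{-1}h(sw)\to Aw$ in $C^1$ as $s\to0$; since symplecticity of a graph is the open condition $1+\det(Dh)>0$, a cut-off interpolation makes $\tilde S$ coincide with the linear graph $\{z_1=Aw\}$ near $0$ while leaving it unchanged outside a small neighbourhood. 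Second, I would deform $\{z_1=Aw\}$ to the $\omega_0$-orthogonal plane $\{z_1=0\}$ by a radial cut-off $\tilde h(w)=\beta(|w|)\,Aw$; the determinant lemma gives $1+\det(D\tilde h)=1+\det A\cdot\beta(\beta+|w|\beta')$, and the case $\det A\ge0$ is immediate, while for $-1<\det A<0$ choosing $\beta$ to rise from $0$ to $1$ gently over many scales keeps $\sup_w\beta(\beta+|w|\beta')$ close to $1$, hence below $1/|\det A|$ since $|\det A|<1$, so the whole deformation stays symplectic and is supported near $p$.

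Finally, all maps used (the linear $A$, the rescaling, and the radial cut-offs $\beta(|w|)$) commute with the scalar action, so the deformation is $\ZZ_d$-equivariant and descends to an honest perturbation of $S$ near $p$. Afterwards $\tilde S=\{z_1=0\}$ and $\tilde S'=\{z_2=0\}$ in an adapted Darboux chart with $\ZZ_d<\UU(2)$, which is exactly the local model $\CC^2/\ZZ_d$ witnessing that $S$ and $S'$ intersect nicely. I expect the main obstacle to be keeping the surface symplectic while rotating its tangent plane to the orthogonal one: this cannot be achieved by an ambient symplectomorphism fixing $S'$, because the stabiliser of a symplectic plane in $\mathrm{Sp}(4,\RR)$ preserves its symplectic orthocomplement and so cannot carry a transverse plane onto it. Thus the cut-off must be controlled directly through the determinant inequality, and it is precisely here that $e_1=e_2$ (scalar, central isotropy) and the strict positivity $\det A>-1$ are essential.
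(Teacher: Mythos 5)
Your proposal is correct and follows essentially the same route as the paper: the paper simply invokes the deformation of \cite[Lemma 6]{MRT} (writing $S$ as a graph over the normal direction to $S'$ and flattening it with radial cut-offs, controlled by the determinant inequality $1+\det(Dh)>0$) and observes that it is $\ZZ_d$-equivariant because the only new ingredient is a function of $|z|$, which is exactly the content of your argument with the details of the cited lemma written out. The role you assign to $e_1=e_2$ (the isotropy acts by scalars, so it preserves norms and commutes with the linearisation and the radial interpolation) is precisely the point the paper makes.
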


\begin{proof}
This follows the argument of \cite[Lemma 6]{MRT}.
We are working on $\CC^2/\ZZ_d$ with the action $\xi\cdot (z_1,z_2)=(\xi^e z_1,\xi^e z_2)$, $\xi=e^{2\pi i/d}$, $e=e_1=e_2$. 
The deformation defined in \cite[Lemma 6]{MRT} is invariant by the action of $\ZZ_d$ since the ingredient
that we introduce is a function $\rho(|z|)$ dependent only on the norm $|z|$ of $z=(z_1,z_2)$.
\end{proof}

Now blow-up $X'''$ at one of the two intersection points  of $L\cap C$ to get a
new manifold $X''''$. Let $E''$ be its exceptional divisor. Note that $L^2=0$ in $X'$, so
$L^2=-1$ in $X''$ and in $X'''$, hence now $L^2=-2$ in $X''''$. Then we can
blow-down $L$ to get a second double point modelled in $\CC^2/\ZZ_2$. Call $W'$ the resulting
manifold.

\begin{figure}[H]
\begin{center}
\includegraphics[width=12cm]{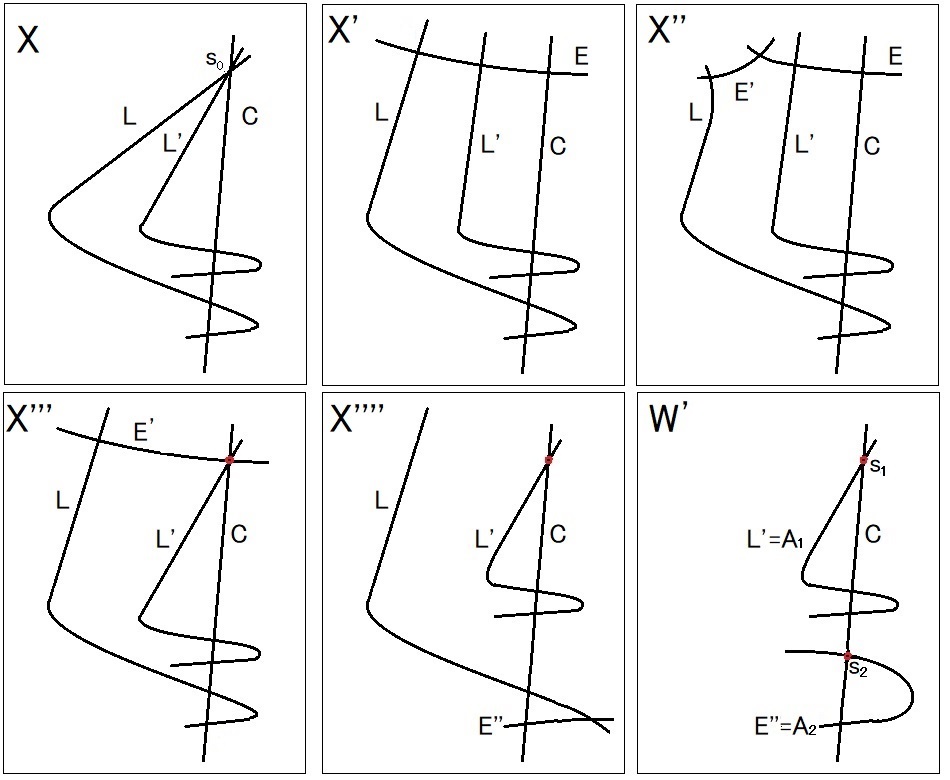}
\end{center}
\end{figure}

The final configuration is a singular complex surface $W'$
with two double points $s_1,s_2$, a genus $1$ curve $C$
through these two points, and two rational curves $A_1=L'$ and $A_2=E''$.  The first curve
$A_1$ goes through $s_1$ and intersects $C$ transversely at another two (smooth) points $s_1',s_1''$.
The second curve $A_2$ goes through $s_2$ and intersects $C$ transversely at another smooth point $s_2'$.
Using Lemma \ref{lem:symplectic orthogonal}, we make the intersection nice.

Note also that $b_2(W')=2$, since each blow-up increases $b_2$ by one, and each blow-down
decreases it by one. Also $b_1(W')=0$, hence $\chi(W')=4$. The fundamental group
does not change with the blow-ups or blow-downs, hence $W'$ is simply connected.

\begin{proposition}\label{prop:24}
 Take the loops $a,b$ generating $\pi_1(C)$. Then they can be contracted in $W'-(A_1\cup A_2)$.
\end{proposition}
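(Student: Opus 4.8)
The plan is to transport the whole question back to the original projective plane $\CP^2$, where the loops $a,b$ live on the smooth cubic $C$ and their contractibility can be read off from the topology of complements of lines. First I would record the birational bookkeeping behind the construction of $W'$. The surface $W'$ is obtained from $\CP^2$ by a sequence of blow-ups and blow-downs whose centers are $s_0=L\cap L'$, the infinitely near point $E\cap L$, the curve $E$ (contracted to the node $s_1$), one point $r\in L\cap C$ (which is blown up, producing $E''$), and finally the line $L$ (contracted to the node $s_2$). Every one of these centers lies on $L\cup L'$, and the images of the two contracted curves are exactly the nodes $s_1\in A_1$ and $s_2\in A_2$. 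Consequently the composite transformation restricts to a biholomorphism
\[
\beta\colon \CP^2\setminus(L\cup L')\ \xrightarrow{\ \cong\ }\ W'\setminus(A_1\cup A_2),
\]
which identifies $C\setminus(L\cup L')$ with $C\setminus(A_1\cup A_2)$. I would choose the representatives of $a,b$ on $C$ to avoid the finite set $C\cap(L\cup L')$; then $\beta$ carries them to the given loops in $W'\setminus(A_1\cup A_2)$, so it suffices to contract $a,b$ in $\CP^2\setminus(L\cup L')$.

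Next I would compute in $\CP^2$. Removing a single line gives $\CP^2\setminus L\cong\CC^2$, so $\CP^2\setminus(L\cup L')\cong \CC^2\setminus(\text{affine line})\cong \CC\times\CC^{*}$, which is homotopy equivalent to $S^1$; hence $\pi_1(\CP^2\setminus(L\cup L'))\cong H_1\cong\ZZ$ is abelian, and a loop is null-homotopic there precisely when its two linking numbers with $L$ and with $L'$ vanish. For $a$ (and likewise $b$) I would note that $a$ already bounds a disc inside the contractible space $\CP^2\setminus L\cong\CC^2$, a disc disjoint from $L$, so $\operatorname{lk}(a,L)=0$; symmetrically, using $\CP^2\setminus L'\cong\CC^2$, $\operatorname{lk}(a,L')=0$. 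Therefore $[a]=0$ in $H_1(\CP^2\setminus(L\cup L'))=\ZZ$, and since this group is abelian $a$ is null-homotopic in $\CP^2\setminus(L\cup L')$; the same applies to $b$. Post-composing these null-homotopies with $\beta$ yields discs in $W'\setminus(A_1\cup A_2)$ bounding $a$ and $b$, which proves the proposition.

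The only delicate point is the birational bookkeeping in the first step: one must verify that the line $L$, which is contracted to the node $s_2\in A_2$, is genuinely deleted on the $\CP^2$ side, so that the contracting discs are sought in $\CP^2\setminus(L\cup L')$ and not merely in the simply connected $\CP^2\setminus L'$. This is exactly what forces the two-line complement (with $\pi_1\cong\ZZ$ rather than trivial) into the argument. Fortunately that fundamental group is still abelian, so the vanishing of the two linking numbers is all that is needed, and no finer homotopy-theoretic understanding of the map $\pi_1(C\setminus(L\cup L'))\to\pi_1(\CP^2\setminus(L\cup L'))$ is required.
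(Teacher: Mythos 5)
Your reduction to $\CP^2$ is sensible (up to a minor bookkeeping point: the birational map identifies $\CP^2\setminus(L\cup L')$ with $W'\setminus(A_1\cup A_2\cup E')$, since the second exceptional divisor $E'$ survives as a curve in $W'$; but contracting in this smaller open set would still suffice). The genuine gap is the linking-number step. In $\CP^2$ the linking number of a null-homotopic-in-$\CP^2$ loop with a line is \emph{not} a well-defined integer: two $2$-chains bounding $a$ differ by a $2$-cycle of some degree $d\in H_2(\CP^2)=\ZZ$, which changes the pair $\bigl(\operatorname{lk}(a,L),\operatorname{lk}(a,L')\bigr)$ by $(d,d)$. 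Only the difference $\operatorname{lk}(a,L)-\operatorname{lk}(a,L')$, computed with a \emph{single} bounding chain, is well defined, and it is precisely the class of $a$ in $\pi_1(\CP^2\setminus(L\cup L'))\cong\ZZ$. You obtain $\operatorname{lk}(a,L)=0$ from a chain $\Delta_1\subset\CP^2\setminus L$ and $\operatorname{lk}(a,L')=0$ from a \emph{different} chain $\Delta_2\subset\CP^2\setminus L'$; what is actually needed is $\Delta_1\cdot L'=0$ (equivalently $\Delta_2\cdot L=0$), and nothing in your argument controls that intersection number. Indeed no argument valid for an arbitrary representative can work: two loops on $C$ avoiding the five points of $C\cap(L\cup L')$ and representing the same generator of $\pi_1(C)$ may differ by a meridian of one of those punctures, and such a meridian maps to a generator of $\pi_1(\CP^2\setminus(L\cup L'))\cong\ZZ$. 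So the class of $a$ in the complement genuinely depends on the chosen representative.

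The statement is therefore really about choosing good representatives, and this is what the paper's proof does: it takes $C$ close to a cuspidal cubic, e.g.\ $y^2=x^3-\epsilon^2x$, so that both generators of $\pi_1(C)$ are vanishing cycles represented by loops inside a small ball $B(0,2\epsilon)$; choosing $L$, $L'$ (and hence all blow-up and blow-down centers) far from this ball, the loops bound discs inside the ball, which persists untouched into $W'-(A_1\cup A_2)$. If you want to keep your framework, you would need to add an argument that representatives of $a,b$ can be chosen with $\frac{1}{2\pi i}\oint d\log(l'/l)=0$, where $l,l'$ are the linear forms cutting out $L,L'$ (for instance by correcting a given representative by suitable meridians of the punctures); the vanishing-cycle choice is the cleanest way to achieve this.
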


\begin{proof}
 It is enough to start with a cubic curve $C$ close to a cuspidal rational curve, e.g.\ $C$ can be given by the equation
 $y^2=x^3-\epsilon^2 x$. Then the vanishing cycles lie inside a ball $B(0,2\epsilon)$. Take the lines
 $L,L'$ far away from this ball. All the process does not touch it, so the loops will still contract in $W'-(A_1\cup A_2)$.
\end{proof}

Let us compute the self-intersection of $C$ in $W'$. Note that an orbifold has a cohomology algebra
$H^*(W',\QQ)$ which has Poincar\'e duality and a well-defined intersection product over the rationals.
Next, we have the following observation: if $\pi:X_1\to X_2$ is a
blow-down map along a $(-2)$-rational curve $E$, and $A,B\subset X_1$ are divisors intersecting $E$
transversally at one point $A\cap E=\{p\}$, $B\cap E=\{q\}$, $p\neq q$, then $\bar A=\pi(A)$
and $\bar B=\pi(B)$ intersect nicely at the singular point and they satisfy 
 $$
 \text{$  \bar A \cdot \bar B= A\cdot B+\frac12$}.
  $$
Using this, we see that $C^2=8$ in $W'$. Denote $A_1\cap C=\{s_1,s_1',s_1''\}$ and $A_2\cap C=\{s_2,s_2'\}$.
We blow-up $W'$ at eight points of $C$, different
from $s_1,s_1',s_1'',s_2,s_2'$, say $s_3,\ldots, s_{10}\in C$. Call $W=W'\# 8 \overline{\CP}{}^2$
the resulting manifold, $E_3,\ldots, E_{10}$ the exceptional divisors, and note that
now $C^2=0$. Also $\chi(\tilde W')=12$.

W'e end up by noting that 
 \begin{equation}\label{eqn:A1A2}
  A_1^2 =\frac12 , \quad  A_2^2 =-\frac12.
  \end{equation}
For instance, $(L')^2=1$ in $X$, so $(L')^2=0$ in $X''$ and thus $A_1^2=\frac12$ in $W'$.


 \subsection{The connected sum}

Inside $Y$  we consider the suborbifold $T_1$, with two points of
multiplicity $2$. The fibers $T_{p_i}$ intersect  $T_1$ symplectically transversally and positively (i.e.\ nicely). The fibers
$S_1,S_2$ also intersect $T_1$ nicely at the singular points. By Remark \ref{rem:z2}, $T_1$ is a symplectically normalizable surface.
In $\tilde W$ we take the curve $C\subset W$ which is a symplectically normalizable surface.

Consider a symplectic orbifold diffeomorphism $f:T_1 \to C$, from $T_1\subset Y$ to $C\subset W$. It may be necessary
to rescale one of the sympletic forms of either $Y$ or $W$ so that the total areas of $T_1$ and $C$ are equal. Moreover,
we arrange $f$ so that it matches the two singular points lying in $T_1$ with the two singular points
in $C$, $f(p_1)=s_1, f(p_2)=s_2$. We also arrange  $f(p_i)=s_i$, $i=3,\ldots,10$ and
$f(p_1')=s_1', f(p_1'')=s_1'', f(p_2')=s_2'$. 

By Remark \ref{rem:z2} there is only one possible model for the normal bundle at the order two singular points. Moreover,
the self-intersection $T_1^2=0$ in $Y$ and $C^2=0$ in $W$ allows to have an orbivector bundle isomorphism
between the normal bundles $\nu_{T_1}$ and $\overline\nu_C$. Therefore we can 
take the orbifold Gompf connected sum
 $$
  Z'= Y \#_{T_1=C} \, W
 $$
This has  
  $$
   \chi(Z')=\chi(Y)+\chi(W)-\chi(T_1)= 16.
  $$
   
 \begin{theorem} \label{thm:25}
  The total space of the $4$-orbifold $Z'$ is simply connected. 
 \end{theorem}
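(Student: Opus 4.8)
The plan is to compute $\pi_1$ of the underlying space of $Z'$ by applying the Seifert--van Kampen theorem to the Gompf decomposition of Definition~\ref{def:Gompf}. Write $Y^{o}=Y-f(B_\e(\nu_{T_1}))$ and $W^{o}=W-f'(B_\e(\nu_{C}))$, so that $Z'=Y^{o}\cup W^{o}$ with the two pieces glued along the collar $M=f(S_\e(\nu_{T_1}))$, which deformation retracts onto the unit circle orbibundle over $T_1\cong C$. This gives
\[
\pi_1(Z')=\pi_1(Y^{o})*_{\pi_1(M)}\pi_1(W^{o}),
\]
where $\pi_1(M)$ is generated by the base loops $a_1,b_1$ of the torus $T_1$ and the meridian $\mu$ of the normal fibre (together with small loops around the two $\ZZ_2$-points carried by $T_1$, which I expect to be inessential for the underlying space).

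First I would show that $\mu$ is already nullhomotopic in $Y^{o}$. A generic fibre $T_{p_i}=\pi^{-1}(p_i)$ of $\pi\colon Y\to C$ is a torus meeting $T_1=(\pi')^{-1}(q_1)$ transversely at a single point of the smooth locus. Deleting that point makes $T_{p_i}$ into a once-punctured torus in $Y^{o}$ whose boundary is exactly $\mu$ and which, in $\pi_1$, expresses $\mu$ as the commutator $[\alpha,\beta]$ of the two generators of $\pi_1(\Sigma_1)$. Since $\alpha$ and $\beta$ contract in $Y-T_1$, this yields $\mu=1$ in $\pi_1(Y^{o})$.

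Next, because $W$ is simply connected and $C$ is a codimension-two suborbifold, $\pi_1(W^{o})$ is normally generated by the meridian $\mu_C$ of $C$; under the gluing $\Theta=f'\circ\hat\Phi\circ g_\e\circ f^{-1}$ the two meridians are identified (up to orientation), so the relation $\mu_C=i_*(\mu)=1$ collapses the entire image of $\pi_1(W^{o})$ in $\pi_1(Z')$. Hence $\pi_1(Z')=\pi_1(Y^{o})/\langle\langle i_*\pi_1(M)\rangle\rangle$. Killing $\mu$ amounts to filling the neighbourhood of $T_1$ back in, so this quotient equals $\pi_1(Y)/\langle\langle a_1,b_1\rangle\rangle$. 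Finally, since the fibre loops $\alpha,\beta$ of $\pi\colon Y\to C$ are contractible, $\pi_1(Y)$ is generated by $a_1,b_1$ (consistently with $H_1(Y,\ZZ)=\langle a_1,b_1\rangle$), and therefore the quotient is trivial, giving $\pi_1(Z')=1$.

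The main obstacle is the orbifold bookkeeping: I must justify van Kampen for the underlying topological spaces in the presence of the $\CC^2/\pm1$ double points lying on $T_1$ and on $C$, check that the small loops around these points (whose links are $\RP^3$) do not contribute new generators, and confirm that $\pi_1(W^{o})$ is genuinely normally generated by $\mu_C$ even though $C$ runs through the singular points $s_1,s_2$ --- this is where Remark~\ref{rem:z2} and Proposition~\ref{prop:24} are needed. A secondary point is to establish rigorously that $\pi_1(Y)$ is generated by $a_1,b_1$, rather than merely that these classes generate $H_1(Y,\ZZ)$, which I would deduce from the fibration $\pi\colon Y\to C$ together with the contractibility of its fibre loops.
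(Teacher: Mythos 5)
Your proof is correct and rests on the same Seifert--van Kampen decomposition as the paper's, but the two key contractions are achieved by different devices, and the comparison is instructive. The paper kills the meridian of $C$ inside $W-C$ by capping it with a punctured exceptional sphere $E_i$ (each $E_i$ meets $C$ transversely once, so $E_i-C$ is a disc), and it kills $a,b$ by transferring them to $W-C$ and contracting them there via the vanishing-cycle argument of Proposition \ref{prop:24}. You instead kill the meridian on the $Y$-side, writing it as the boundary $[\alpha,\beta]$ of a punctured generic fibre $T_p$ and using that $\alpha,\beta$ die in $Y-T_1$ (they push into a special fibre $S_i$); you then observe that once the meridian dies, normal generation of $\pi_1(W-C)$ by the meridian collapses the entire $W$-side, so $a_1,b_1$ (which the gluing carries into $W^{o}$) die for free. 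Your route therefore does not actually need Proposition \ref{prop:24} for this statement --- that proposition is genuinely needed later, in Section \ref{sec:pi1}, where one must contract $a,b$ in the complement of $A_1\cup A_2$ rather than of $C$ --- and it trades the exceptional-sphere trick for the commutator trick; the price is the extra final step ``$\pi_1(Y)$ is normally generated by $a_1,b_1$,'' which you must (and do) supply, whereas the paper simply asserts the corresponding statement for $\pi_1(Y-T_1)$. The one place where both arguments are equally informal is the orbifold bookkeeping you flag: the link of $T_1$ (resp.\ $C$) is a Seifert $3$-manifold with two exceptional fibres over the $\CC^2/\pm 1$ points, so the gluing region contributes half-meridians $\delta_j$ with $\delta_j^2=\mu$; in your set-up these lie in $i_*\pi_1(M)$ and are killed wholesale by the collapse of the $W$-side, which is if anything cleaner than the paper's unqualified claim that $\pi_1(Y-T_1)$ is generated by $a,b$.
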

  
  \begin{proof}
    The fundamental group $\pi_1(Z')$ is the amalgamated sum of $\pi_1(Y-T_1)$ and $\pi_1(W-C)$.
   The fundamental group of $Y-T_1$ is generated by two loops $a,b\in \pi_1(T_1)$. 
   They are transferred to $W-C$, where they can be contracted by Proposition \ref{prop:24}.
   The fundamental group of $W-C$ is generated by a loop around $C$. This can be contracted by using
   one of the exceptional spheres $E_i$.   
 \end{proof}
 
The Betti numbers of $Z$ are $b_0=1$, $b_1=0$ and $b_2=14$.
We have $14$ surfaces
\begin{equation} \label{eqn:V12-def}
 \begin{aligned} 
   &V_1 = A_1 \# (S_1\cup T_{p_1'}\cup T_{p_1''}), \\
   &V_2 = A_2 \# (S_2\cup T_{p_2'}), \\
   & V_i = E_i \# T_{p_i}, \quad i=3,\ldots, 10, \\
   &U_1,U_2,U_3,U_4\, .
 \end{aligned}
 \end{equation} 

They are all symplectic. 
 The genus of $V_1$ is $2$ and the genus of $V_2$ is $1$. 
 Also the surfaces $V_i$, $U_j$, $3\leq i\leq 10$, $1\leq j\leq 4$, all have genus $1$.
 
 \begin{lemma} \label{lem:4}
 If $T$, $T'$ are two symplectic tori in a symplectic $4$-manifold $X$ intersecting transversely and positively at one point, 
then the (symplectic) blow-up of $X$ contains three disjoint symplectic surfaces, two tori and one surface of genus $2$,
contained in a neighbourhood of (the blow-up of) $T\cup T'$, and linearly independent in homology.
\end{lemma}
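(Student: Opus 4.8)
The plan is to produce the three surfaces inside the blow-up $\tilde X$ of $X$ at the single point $p=T\cap T'$. Let $E\subset\tilde X$ denote the exceptional sphere, with $[E]^2=-1$, and let $\tilde T,\tilde T'$ be the proper transforms of $T,T'$. Since $T$ and $T'$ meet transversally and positively at the one point $p$, the transforms $\tilde T,\tilde T'$ are again symplectic tori, they are now disjoint, because $[\tilde T]\cdot[\tilde T']=[T]\cdot[T']-1=0$, and each of them meets $E$ transversally in a single point, say $x_1\in\tilde T\cap E$ and $x_2\in\tilde T'\cap E$, with $[\tilde T]\cdot[E]=[\tilde T']\cdot[E]=1$. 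I take $\tilde T$ and $\tilde T'$ as the two tori among the three required surfaces.

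For the genus $2$ surface I would symplectically resolve the two nodes of the chain $\tilde T\cup E\cup\tilde T'$. At each of $x_1,x_2$ the two branches are symplectically orthogonal and meet positively, so Gompf's local resolution of a transverse intersection \cite{Gompf} replaces a neighbourhood of each node by an embedded symplectic annulus and yields a single connected symplectic surface $G$ in the homology class $[\tilde T]+[E]+[\tilde T']$. Its genus follows from the Euler characteristic: each resolution lowers $\chi$ by $2$, whence
\[
\chi(G)=\chi(\tilde T)+\chi(E)+\chi(\tilde T')-4=0+2+0-4=-2,
\]
so that $G$ has genus $2$.

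The step needing care, and the main obstacle, is to make $G$ genuinely \emph{disjoint} from $\tilde T$ and $\tilde T'$, since as constructed $G$ shares most of $\tilde T$ and of $\tilde T'$. Here I use that the tori to which the lemma is applied satisfy $T^2=(T')^2=0$ (this is the case for the tori $U_i$); then $[\tilde T]^2=[\tilde T']^2=-1$ and
\[
[G]\cdot[\tilde T]=\big([\tilde T]+[E]+[\tilde T']\big)\cdot[\tilde T]=-1+1+0=0,
\]
and likewise $[G]\cdot[\tilde T']=0$, so disjointness is not obstructed homologically. Geometrically, $G$ coincides with $\tilde T$ outside a disc $D_{x_1}$ around $x_1$ and with $\tilde T'$ outside a disc $D_{x_2}$ around $x_2$. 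Over the once-punctured tori $\tilde T\setminus D_{x_1}$ and $\tilde T'\setminus D_{x_2}$ the normal bundles are trivial, so I push these portions of $G$ off $\tilde T$ and $\tilde T'$ along a nowhere-vanishing normal section, matching the resolution annuli near $\partial D_{x_1}$ and $\partial D_{x_2}$. This perturbation is $C^\infty$-small, hence preserves the genus and the symplectic condition, does not change $[G]$, and keeps $G$ inside a neighbourhood of $\tilde T\cup E\cup\tilde T'$.

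Linear independence is then immediate from the intersection form: in the basis $\{[\tilde T],[\tilde T'],[G]\}$ the computations above give the Gram matrix $\operatorname{diag}(-1,-1,1)$, which is nondegenerate. Hence $\tilde T$, $\tilde T'$ and $G$ are three pairwise disjoint symplectic surfaces, two tori and one surface of genus $2$, contained in a neighbourhood of the blow-up of $T\cup T'$ and linearly independent in $H_2(\tilde X)$, as claimed.
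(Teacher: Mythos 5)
Your argument is correct and lands on the same final configuration, but it runs in the opposite order from the paper's proof. The paper first resolves the node of $T+T'$ inside $X$ to obtain a genus-$2$ symplectic surface $\Sigma$ with $[\Sigma]=[T]+[T']$, then isotopes $\Sigma$ (locally $(z-\varepsilon)(w-\varepsilon)=\varepsilon^2$) so that $T$, $T'$ and $\Sigma$ pass through the single point $p$ with pairwise transverse intersections there and nowhere else; one blow-up at $p$ then separates all three proper transforms simultaneously, since they meet the exceptional sphere at the three distinct points determined by their tangent directions at $p$. You instead blow up first and resolve the chain $\tilde T\cup E\cup \tilde T'$ afterwards, which forces the additional push-off step to detach $G$ from $\tilde T$ and $\tilde T'$. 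That step does work, but the justification is not really that the normal bundle is trivial over the punctured torus: what you need is that the specific nonvanishing section $w=\epsilon/z$ prescribed near $\partial D_{x_1}$ extends without zeros over $\tilde T\setminus D_{x_1}$, i.e.\ that the relative Euler number vanishes --- and that is precisely your computation $[G]\cdot[\tilde T]=0$, so you should present it as the reason rather than as a mere consistency check. Note also that this computation, and hence your disjointness, genuinely uses $T^2=(T')^2=0$ (which, as you say, holds for the tori $U_i$ to which the lemma is applied), whereas the paper's blow-up separation is independent of the self-intersections; on the other hand your route avoids having to isotope $\Sigma$ back through $p$ and verify that $p$ is its only intersection with $T$ and $T'$. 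Both constructions produce the same classes $[T]-[E]$, $[T']-[E]$, $[T]+[T']-[E]$, so the genus, disjointness and independence conclusions agree.
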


\begin{proof}
 Let $p$ be the intersection point $T\cap T'$. Using \cite[Lemma 6]{MRT}, we can 
 take a chart $(z,w)$ around $p$ 
 where $T=\{z=0\}$, and $T'=\{w=0\}$. Consider $T+T'$ and resolve 
the singularity producing a symplectic genus $2$ surface $\Sigma$. We move it to intersect $T$ and $T'$ 
at the same point $p$. Locally the equation of $\Sigma$ is
$(z-\varepsilon)\cdot (w-\varepsilon)=\varepsilon^2$. As 
  $\Sigma\cdot T=1$, $\Sigma\cdot T'=1$, the point 
  $p$ is the only intersection point of the three surfaces $T,T',\Sigma$, and they intersect 
transversely. Moreover, $\Sigma^2=(T+T')^2=2$. Blowing up at $p$ we get a symplectic 
manifold $\tilde X=X\# \overline{\CP}{}^2$, where the proper transforms  $\tilde T,\tilde T',\tilde\Sigma$ 
are disjoint symplectic surfaces of genus
$1,1,2$ and self-intersection numbers $-1,-1,1$. 
They generate the same $3$-dimensional space in homology
as $T,T'$ and the exceptional sphere $E$. 
\end{proof}

Using Lemma \ref{lem:4}, take the orbifold 
 $$
 Z=Z'\# 2\overline{\CP}{}^2 ,
 $$
which is the blow-up twice of $Z'$ and the two points $U_1\cap U_2$, $U_3\cap U_4$. Call again $U_i$ the proper
transforms of $U_i$, and call $W_1,W_2$ the new genus $2$ symplectic surfaces. We denote $V_{10+i}=U_i$, $i=1,2,3,4$,
and $V_{15}=W_1,V_{16}=W_2$. Then $V_1,\ldots, V_{16}$ are disjoint surfaces. 

Consider coefficients $m_i$, $1\leq i\leq 16$. Applying Proposition \ref{prop:orb->symp} we put 
a symplectic orbifold structure on $Z$, so that $V_i$ is an isotropy surface with multiplicity $m_i$.

The following proves Theorem \ref{thm:main2}:

\begin{theorem} \label{thm:27}
There is a  simply connected symplectic orbifold $Z$ with $b_2=16$ and 
containing $16$ disjoint symplectic orbifold surfaces. They span $H_2(Z,\QQ)$,
thirteen surfaces are of genus $1$ and three surfaces are of genus $2$. 
\end{theorem}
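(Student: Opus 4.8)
The plan is to collect together the three geometric facts already established about $Z'$ and $Z$ and then verify each claimed property of the final orbifold in turn. First I would record that $Z = Z' \# 2\overline{\CP}{}^2$ is simply connected: by Theorem~\ref{thm:25} the space $Z'$ is simply connected, and symplectic blow-up at a point does not change $\pi_1$ (the blow-up replaces a ball by a tubular neighbourhood of $\CP^1$, and van Kampen shows the fundamental group is unaffected), so $\pi_1(Z)=1$. This handles ``simply connected''.

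Next I would compute the second Betti number. From the discussion preceding the statement, $b_2(Z')=14$, and each of the two blow-ups raises $b_2$ by exactly one (adding the class of the new exceptional sphere), so $b_2(Z)=16$. For the spanning and disjointness claims I would invoke the explicit basis. The surfaces $V_1,\dots,V_{10}$ built in (\ref{eqn:V12-def}) together with $U_1,U_2,U_3,U_4$ span $H_2(Z',\QQ)$, being $14$ classes in a $14$-dimensional space; one should check they are in fact linearly independent, which follows from their intersection pattern (the $V_i=E_i\#T_{p_i}$ inherit independence from the exceptional divisors $E_i$, while $U_1\cdot U_2=1$, $U_3\cdot U_4=1$ with the remaining products vanishing gives a nondegenerate block). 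After blowing up the two intersection points $U_1\cap U_2$ and $U_3\cap U_4$, Lemma~\ref{lem:4} tells us precisely that in a neighbourhood of each pair we obtain three \emph{disjoint} symplectic surfaces (two tori and one genus-$2$ surface) that are linearly independent and span the same subspace as the original two tori plus the exceptional class. Thus $V_1,\dots,V_{16}$ are pairwise disjoint: the surfaces $V_1,\dots,V_{10}$ were already disjoint from the $U_j$ in $Z'$, the only coincidences among the $U_j$ were the two intersection points, and those have now been separated by Lemma~\ref{lem:4}, so the full collection of sixteen surfaces is disjoint and, by the independence statements, spans $H_2(Z,\QQ)$.

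Finally I would tally the genera. The surfaces $V_3,\dots,V_{10}$ are tori (a torus $T_{p_i}$ fiber-summed with an exceptional sphere $E_i$ remains genus $1$), and $V_2$ has genus $1$ as recorded; among $V_{11},\dots,V_{14}=U_1,\dots,U_4$ all four are tori. This gives $8+1+4=13$ surfaces of genus $1$. The surface $V_1$ has genus $2$, and the two new surfaces $W_1,W_2$ produced by Lemma~\ref{lem:4} each have genus $2$, giving three surfaces of genus $2$. This matches the claimed count of thirteen genus-$1$ and three genus-$2$ surfaces, completing the proof.

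I expect the main obstacle to be the bookkeeping in the spanning and independence argument: one must confirm that after the two blow-ups the replaced classes $U_1,U_2,W_1$ (and $U_3,U_4,W_2$) together with the unchanged $V_1,\dots,V_{10}$ still form a basis of $H_2(Z,\QQ)$. The cleanest route is to note that Lemma~\ref{lem:4} guarantees each triple spans the same three-dimensional subspace as the pre-blow-up tori plus exceptional sphere, so that the span is preserved dimension-by-dimension; combined with $b_2(Z)=16$ and sixteen surfaces, linear independence and spanning are equivalent, and either one suffices.
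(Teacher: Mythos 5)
Your proposal is correct and follows essentially the same route as the paper, which states Theorem \ref{thm:27} as a summary of the construction of Section \ref{sec:5} (Theorem \ref{thm:25} for simple connectivity, the Euler characteristic count for $b_2$, the list (\ref{eqn:V12-def}) together with Lemma \ref{lem:4} for the sixteen disjoint surfaces and the genus tally). For the independence/spanning bookkeeping you flag at the end, the quickest closure is to observe that the self-intersections recorded in (\ref{eqn:V12}) are all nonzero, so the sixteen pairwise disjoint surfaces have a diagonal nondegenerate intersection matrix in $H_2(Z,\QQ)$ and are automatically linearly independent, hence a basis since $b_2(Z)=16$.
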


The self-intersections of the surfaces are as follows:
\begin{equation} \label{eqn:V12}
 \begin{aligned} 
  V_1^2 &= \frac12, V_2^2 = -\frac12\\
  V_i^2 &=-1, i=3,\ldots, 14, \\
  V_{15}^2& =1, V_{16}^2=1.
  \end{aligned}
  \end{equation}
We only need to compute $V_1^2, V_2^2$, the other are obvious. Let us
do the case of $V_1=A_1 \# (S_1\# T_{p_1'}\# T_{p_1''})$. We have that
 $$
  V_1^2=A_1^2 + (S_1^2 +T_{p_1'}^2+T_{p_1''}^2) = \frac12,
  $$
using (\ref{eqn:A1A2}), and analogously for $V_2^2$. The formula above can be justified 
as follows. Let $(z_1,z_2)$ be coordinates on $W$ around the point $s_1\in W$ with $C=\{z_1=0\}$,
such that $A_1=\{z_2=0\}$, and $(z_1,z_2)$ coordinates on $Y$ around $p_1\in T_1$ with $T_1=\{z_1=0\}$,
such that $S_1=\{z_2=0\}$. We can perturb $A_1$ topologically as $A_1'=\{z_2=\epsilon z_1\}$ and $S_1$ 
as $S_1'=\{z_2= \epsilon \bar z_1\}$, so that they can be glued in the Gompf connected sum. The local 
intersection is $A_1\cdot A_1' =1/2$ and $S_1 \cdot S_1' =-1/2$, whereas after the connected sum $A_1\# S_1$ does
not intersect $A_1'\# S_1'$.

\section{A $5$-dimensional K-contact manifold} \label{sec:k-contact}

Consider a contact co-oriented manifold $(M,\eta)$ with a contact form $\eta$. 
We say that $(M,\eta)$ admits a {\it Sasakian structure} $(M,g,\xi,\eta,J)$ if
\begin{enumerate}
\item there exists an endomorphism $J:TM\rightarrow TM$ such that 
 $J^2=-\operatorname{Id}+\xi\otimes\eta$,
for the Reeb vector field $\xi$ of $\eta$,
\item $J$ satisfies the conditions
$d\eta(JX,JY)=d\eta(X,Y)$, for all vector fields $X,Y$ and $d\eta(JX,X)>0$ for all non-zero $X\in\ker\eta$,
\item the Reeb vector field $\xi$ is Killing with respect to the Riemannian metric $g(X,Y)=d\eta(JX,Y)+\eta(X)\eta(Y)$,
\item the almost complex structure $I$ on the contact cone 
$C(M)=(M\times\mathbb{R}_{+},t^2g+dt^2)$
 defined by 
$I(X)=J(X),X\in\ker \eta$, $I(\xi)=t{\partial\over\partial t}$, $I\left(t{\partial\over\partial t}\right)=-\xi$,
 is integrable.
\end{enumerate}
If one drops the condition of the integrability of  $I$, one obtains a {\it K-contact} structure.

A compact simply connected $5$-manifold $M$ is called a {\it Smale-Barden manifold}. 
These manifolds are classified by their second homology group over $\mathbb{Z}$ and a 
{\it Barden invariant} (see \cite[Theorem 10.2.2]{BG}). Let us write the second homology
as a direct sum of cyclic groups of prime power order
  $$
   H_2(M,\mathbb{Z})=\mathbb{Z}^k\oplus (\mathop{\oplus}_{p,i}\mathbb{Z}_{p^i}^{c(p_i)}),
   $$
where $k=b_2(M)$. Choose this decomposition in a way that the second Stiefel-Whitney class map
 $w_2: H_2(M,\mathbb{Z})\rightarrow\mathbb{Z}_2$
iz zero on all but one summand $\mathbb{Z}_{2^j}$. The value of $j$ is unique and 
it is denoted by $i(M)$ and is called the Barden invariant.

The problem of the existence of simply connected K-contact non-Sasakian compact manifolds 
is still open in dimension $5$ (open problem 10.2.1 in \cite{BG}):

{\it Do there exist Smale-Barden manifolds which carry K-contact but do not carry Sasakian structures?}

A Sasakian structure on a compact manifold $M$ is called {\it quasi-regular} if there 
is a positive integer $\delta$ satisfying the condition that each point of $M$ has a 
neighbourhood such that each leaf for $\xi$ passes through $U$ at most $\delta$ times. 
If $\delta=1$, the structure is called regular. It is known \cite{R} that if a compact manifold 
admits a Sasakian structure, it also admits a quasi-regular one. Thus, 
when we are interested in existence questions, we may consider Sasakian structures which are quasi-regular. 
Analogous results are true for K-contact manifolds \cite{MT}.

A Seifert bundle is a space fibered by circles over an orbifold. We give a precise definition.

\begin{definition} \label{definition seifert bundle}
Let $X$ be a cyclic, oriented $n$-dimensional orbifold. A Seifert bundle over $X$ is an oriented  
$(n+1)$-dimensional manifold $M$ equipped with a smooth $S^1$-action and a 
continuous map $\pi:M \to X$ such that for an orbifold chart $(U,\tilde U,  \ZZ_m,\varphi)$, there is
is a commutative diagram
 $$
\begin{tikzcd}
(\tilde U \x S^1)/\ZZ_{m} \arrow{r}{\cong} \arrow{d}{\pi} & \pi^{-1}(U) \arrow{d}{\pi} \\
\tilde U/\ZZ_{m} \arrow{r}{\cong} & U
\end{tikzcd}
 $$
where the action of $\ZZ_{m}$ on $S^1$ is by multiplication by $\xi=e^{2\pi i /m}$ and the top diffeomorphism is
$S^1$-equivariant.
\end{definition}

Let $X$ be a $4$-dimensional cyclic orbifold. Let $p$ be a cyclic isotropy point of some order $m>0$.
The local model is given by $\CC^2/\ZZ_m$ where the action is 
given for $\xi=e^{2\pi i /m}$ by
 $$
 \xi\cdot (z_1,z_2)=(\xi^{j_2} z_1, \xi^{j_1} z_2),
 $$
where $(m,j_1,j_2)\in \ZZ_m\x \ZZ_m$ are the local invariants.
The Seifert bundle is defined as
 $$
 ( \CC^2\x S^1)/\ZZ_m\, ,
 $$
with the action $ \xi\cdot (z_1,z_2,u)=(\xi^{j_2} z_1, \xi^{j_1} z_2,\xi u)$.

Let $P\subset X$ be the subset of singular points, and
$D_i$ the isotropy surfaces.

\begin{definition}
For a Seifert bundle $\pi:M\to X$, we define its Chern class as follows. Let 
$\ell=\lcm ( m(x) \, | \, x\in X)$. We denote by $M/\ell$ the quotient of $M$ by $\ZZ_\ell\subset S^1$. Then
$M/\ell \to X$ is a circle fiber bundle and it has a Chern class $c_1(M/\ell)\in H^2(X,\ZZ)$. We define
 $$
 c_1(M)=\frac{1}{\ell} c_1(M/\ell) \in H^2(X,\QQ).
 $$
\end{definition}

Let $P=\{x_j\}\subset X$ be the set of singular points. We consider small balls around all of the points, $B=\sqcup B_j\subset X$.
Every $L_j=\bd B_j$ is a lens space of order $d_j=d(x_j)$, that is $S^3/\ZZ_{d_j}$. Let $L=\sqcup L_j$.
Taking the Mayer-Vietoris exact sequence of $X-B$ and $\bar B$, and using that $H^1(L_j,\ZZ)=0$, $H^2(L_j,\ZZ)=\ZZ_{d_j}$, we have
and exact sequence 
 $$
  0 \to H^2(X,\ZZ) \to H^2(X-P,\ZZ) \to \oplus \, \ZZ_{d_j}
 $$
 Note that when $H_1(X,\ZZ)=0$, we have $H^2(X,\ZZ)=\ZZ^{b_2}$ is torsion-free, so any integral class in $H^2(X,\ZZ)$ is
 well-determined by any multiple of it.

\begin{lemma}
Let $m=\lcm ( m_i)$. 
Then $c_1(M/m)=m \, c_1(M)$ is integral in $H^2(X-P,\ZZ)$. Note that $c_1(M)=\frac1m c_1(M/m)$.
\end{lemma}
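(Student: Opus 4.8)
The plan is to reduce everything to the open suborbifold $X-P$, where the only isotropy comes from the smooth parts of the $D_i$, so that every local isotropy order divides $m=\lcm(m_i)$. Over such a locus I will show that passing to $M/m$ already unwinds all the orbifold fibers, so that $(M/m)|_{X-P}\too X-P$ is a genuine circle bundle, which gives an integral class $c_1(M/m)\in H^2(X-P,\ZZ)$. The identity $c_1(M/m)=m\,c_1(M)$ will then come essentially for free by comparing the two quotients $M/m$ and $M/\ell$.

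First I would check the isotropy orders over $X-P$. A point of $X-P$ is either regular or lies on the smooth part $D_i^\circ$ of a single isotropy surface, since an intersection of two isotropy surfaces is a singular point and hence belongs to $P$. At a point of $D_i^\circ$ the order is exactly $m_i$, so every isotropy order occurring in $X-P$ divides $m$.

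The key step is the local unwinding near $D_i^\circ$. In an orbifold chart the Seifert bundle is $(\CC^2\x S^1)/\ZZ_{m_i}$, with the generator $g_1$ acting by $g_1(z_1,z_2,u)=(z_1,\xi^j z_2,\xi u)$, where $\xi=e^{2\pi i/m_i}$, $\gcd(j,m_i)=1$, and $z_1$ is tangent to $D_i$, $z_2$ normal. Passing to $M/m$ adjoins $g_2(z_1,z_2,u)=(z_1,z_2,\zeta u)$ with $\zeta=e^{2\pi i/m}$, so $(M/m)|_U=(\CC^2\x S^1)/G$ with $G=\la g_1,g_2\ra$. Since $m_i\mid m$ we have $\xi=\zeta^{m/m_i}$, and the element $h=g_1 g_2^{-m/m_i}$ acts by $h(z_1,z_2,u)=(z_1,\xi^j z_2,u)$, i.e.\ trivially on the $S^1$ factor. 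One checks $\la h\ra\cap\la g_2\ra=\{1\}$, whence $G=\la h\ra\x\la g_2\ra\cong \ZZ_{m_i}\x\ZZ_m$, with $\la h\ra$ acting only on $\CC^2$ and $\la g_2\ra$ only on $S^1$. Therefore
$$
(M/m)|_U=(\CC^2/\la h\ra)\x(S^1/\la g_2\ra)\cong U\x S^1 ,
$$
and the chart isotropy group $\ZZ_{m_i}$ acts trivially on the fiber. This is exactly the condition for $M/m$ to be a genuine circle bundle over $X-P$, so $c_1(M/m)\in H^2(X-P,\ZZ)$ is integral. The hard part is precisely verifying this splitting $G\cong\ZZ_{m_i}\x\ZZ_m$; it uses $m_i\mid m$ in an essential way, since for a quotient by $\ZZ_k$ with $m_i\nmid k$ the element $h$ would not act trivially on $u$ and the fiber would retain isotropy.

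Finally I would identify the Chern class. Over $X-P$ we have $M/\ell=(M/m)/\ZZ_{\ell/m}$, and quotienting a principal $S^1$-bundle by $\ZZ_k\subset S^1$ raises the transition functions $g_{ij}$ to the $k$-th power (via $S^1/\ZZ_k\cong S^1$, $[u]\mapsto u^k$), hence $c_1(M/\ell)=\tfrac{\ell}{m}\,c_1(M/m)$. Thus $c_1(M/m)=\tfrac{m}{\ell}\,c_1(M/\ell)=m\cdot\tfrac1\ell c_1(M/\ell)=m\,c_1(M)$ in $H^2(X-P,\QQ)$, and the left-hand side is integral by the previous step. The concluding remark $c_1(M)=\tfrac1m c_1(M/m)$ is then immediate.
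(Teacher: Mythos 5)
Your proof is correct and takes essentially the same route as the paper's, which simply asserts that $M/m\to X-P$ is a circle fiber bundle and that $c_1(M/\ell)=\frac{\ell}{m}c_1(M/m)$; you have supplied the local unwinding details behind that assertion. One small inaccuracy: two isotropy surfaces $D_i,D_j$ with $\gcd(m_i,m_j)=1$ can meet at a point of $X-P$ (when the link order $d=1$ the underlying space is a manifold there, so the point need not lie in $P$), but the isotropy order at such a point is $m_im_j=\lcm(m_i,m_j)$, which still divides $m$, so your unwinding argument applies verbatim.
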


This is an easy consequence of the fact that $M/m\to X-P$ is a circle fiber bundle.
Note that $m|\ell$. So $c_1(M/m)=\frac{m}{\ell} c_1(M/\ell)$.

Let $X^o=X-B$, which is a manifold with boundary $L$. There is Poincar\'e duality 
 $$
 H^2(X^o,\ZZ) \x H^2(X^o,L, \ZZ) \to H^4(X^o,L,\ZZ).
 $$
 Now we have isomorphisms
 $H^2(X-P,\ZZ)=H^2(X^o,\ZZ)$ and $H^k(X^o,L,\ZZ)=H^k(X,B,\ZZ)=H^k(X,P,\ZZ)=H^k(X,\ZZ)$, for $k\geq 2$, since $P$ is $0$-dimensional.
 Moreover $H^4(X,\ZZ)=\ZZ$, generated by the fundamental class. Hence Poincar\'e duality is a perfect pairing
 $$
  H^2(X-P,\ZZ) \x H^2(X,\ZZ) \to \ZZ.
  $$
In particular the class $[D_i]\in H_2(X,\ZZ)$ gives a map $H^2(X-P,\ZZ)\to \ZZ$, and hence a class in $H^2(X,\QQ)$ via
the inclusion $H^2(X,\ZZ)\subset H^2(X-P,\ZZ)$.

\begin{proposition} \label{prop:14MRT}
Let $X$ be an oriented $4$-manifold and $D_i \subset X$ oriented surfaces of $X$ which
intersect transversely. Let $m_i>1$ such that $\gcd(m_i,m_j)=1$ if $D_i$ and $D_j$ intersect. 
Suppose that we have local invariants ${\mathbf{j}}_{D_i}=(m_i,j_i)$ for each $D_i$ and 
 $\mathbf{j}_x$, for every 
singular point $x\in P$, which are compatible. 

Let $0<b_i<m_i$ such that $j_ib_i\equiv 1 \pmod{m_i}$.
Finally, let $B$ be a complex line bundle on $X$. Then
there is a Seifert bundle $f:M  \to X$ with the given local invariants and first Chern class 
 \begin{equation}\label{eqn:c1}
 c_1(M)=c_1(B) + \sum_i \frac{b_i}{m_i} [D_i]. 
 \end{equation}
The set of all such Seifert bundles forms a principal homogeneous space under $H^2(X,\ZZ)$, 
where the action corresponds to changing $B$. 
\end{proposition}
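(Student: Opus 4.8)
The plan is to build $M$ by first prescribing it on a neighbourhood of the isotropy locus from the local models, and then extending it over the regular part as an honest circle bundle, following the strategy of Koll\'ar \cite{K} and \cite{MRT}. First I would fix tubular neighbourhoods of each isotropy surface $D_i$ and small balls around each singular point $x\in P$. Over each such piece the local invariants $\mathbf{j}_{D_i}=(m_i,j_i)$ and $\mathbf{j}_x$ dictate the local Seifert model $(\tilde U\x S^1)/\ZZ_m$ as in Definition \ref{definition seifert bundle}, where $\ZZ_m$ acts on the $S^1$-factor by $\xi$ and on $\tilde U$ by the prescribed normal weights. Since the invariants are compatible (Section \ref{sec:loc-inv}), these pieces glue to a Seifert bundle $M_\nu$ over a neighbourhood $\mathcal{N}$ of $\Sigma=P\cup\bigcup_i D_i$. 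The hypothesis $\gcd(m_i,j_i)=1$ guarantees that the normal $\ZZ_{m_i}$-representation is faithful, so $D_i$ is an isotropy surface of multiplicity exactly $m_i$; combined with the weight $1$ on the $S^1$-factor, the $\ZZ_{m_i}$-action on $\tilde U\x S^1$ is free, so $M_\nu$ is a genuine manifold. The same hypothesis produces the inverse $b_i$ with $j_ib_i\equiv 1\pmod{m_i}$, which will govern the fractional Chern contribution below.

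Second, let $X^*=X-\mathcal{N}$, a smooth compact manifold with boundary $\bd X^*=\bd\mathcal{N}$. Over $X^*$ the projection must be an honest principal $S^1$-bundle, and I would choose it to realise the restriction $M_\nu|_{\bd\mathcal{N}}$ on the boundary together with the integral line bundle $B$ in the interior. Extending an $S^1$-bundle from $\bd X^*$ to $X^*$, once the boundary value is fixed, is controlled by $H^2(X,\ZZ)$ through the long exact sequence of the pair $(X^*,\bd X^*)$; the choice of $B$ pins down exactly this freedom. Gluing $M_\nu$ to this circle bundle along $\bd\mathcal{N}$ produces the Seifert bundle $f:M\to X$ with the prescribed local invariants.

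The main step is the identification of the Chern class. Using $c_1(M)=\tfrac1\ell\, c_1(M/\ell)$ and the perfect Poincar\'e pairing $H^2(X-P,\ZZ)\x H^2(X,\ZZ)\to\ZZ$ recalled before the statement, I would evaluate $c_1(M)$ against test classes. The contribution of $B$ is the integral class $c_1(B)$, while each $D_i$ contributes a fractional term: near $D_i$ the Seifert invariant is $j_i$, and unwinding the local model for the circle bundle $M/\ell\to X-P$ shows that the local Euler number picks up the \emph{inverse} $b_i/m_i$, which is precisely why $b_i\equiv j_i^{-1}\pmod{m_i}$ enters formula (\ref{eqn:c1}). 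Summing these local contributions against the global extension governed by $B$ yields $c_1(M)=c_1(B)+\sum_i\frac{b_i}{m_i}[D_i]$. This bookkeeping, matching local orientations and weights so that the coefficient is exactly $b_i/m_i$ rather than $j_i/m_i$, is the delicate part of the argument and the step I expect to be the main obstacle.

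Finally, the homogeneous-space statement follows formally. Two Seifert bundles $M,M'$ with the same local invariants agree near $\Sigma$, so they coincide over $\mathcal{N}$; their difference is then a genuine principal $S^1$-bundle over $X$, i.e.\ a complex line bundle, classified by $H^2(X,\ZZ)$. Conversely, tensoring the base line bundle $B$ by an arbitrary line bundle (equivalently, twisting the circle bundle over $X^*$ by its Chern class) produces another Seifert bundle with the same local data and shifts $c_1(M)$ by the corresponding integral class, which by the pairing above is injective into $H^2(X,\QQ)$. Hence the set of such Seifert bundles is a principal homogeneous space under $H^2(X,\ZZ)$, the action being given by changing $B$.
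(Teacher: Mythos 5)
Your proposal is correct and follows essentially the same route as the paper: the paper's own proof consists of noting that the local model near each singular point is determined by $\mathbf{j}_x$ and then deferring entirely to \cite[Proposition 14]{MRT}, whose underlying argument (glue local Seifert models over a neighbourhood of the isotropy locus, extend as an honest circle bundle over the regular part with the freedom pinned down by $B$, and identify the fractional Chern contributions $b_i/m_i$ with $b_i\equiv j_i^{-1}\pmod{m_i}$) is precisely what you outline. Your sketch is in fact more detailed than the paper's two-sentence proof, though the "delicate bookkeeping" step you flag is likewise left to the cited reference there.
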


\begin{proof}
 The local model of the Seifert bundle around a singular point is constructed as above, and it is determined by
 the local invariant $\mathbf{j}_x$, $x\in P$. The rest of the argument is carried out exactly as in  
 \cite[Proposition 14]{MRT}.
\end{proof}

The following is the extension of \cite[Theorem 16]{MRT} to the case of quasi-regular Seifert bundles.

\begin{theorem} \label{thm:16MRT}
Suppose that $\pi:M\to X$ is a quasi-regular Seifert bundle with isotropy surfaces $D_i$ with multiplicities $m_i$. 
Let $m=\lcm (m_i)$. Then $H_1(M,\ZZ)=0$ if and only if
 \begin{enumerate}
 \item $H_1(X,\ZZ)=0$,
 \item $H^2(X,\ZZ)\to \mathop{\oplus}\limits_i H^2(D_i,\ZZ_{m_i})$ is surjective,
 \item $c_1(M/m)\in H^2(X-P,\ZZ)$ is a primitive class.
 \end{enumerate}
 Moreover, $H_2(M,\ZZ)=\ZZ^k\oplus (\mathop{\oplus}\limits_i \ZZ_{m_i}^{2g_i})$, $g_i=$\,genus of $D_i$, $k+1=b_2(X)$.
\end{theorem}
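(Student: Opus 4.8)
The plan is to adapt the proof of the semi-regular case \cite[Theorem 16]{MRT}, the essential new point being a local analysis over the isolated singular points $x_j\in P$, which are absent in the semi-regular situation. I would use the decomposition $M=M^o\cup\pi^{-1}(B)$ glued along $\pi^{-1}(L)$, where $B=\sqcup_jB_j$ is a union of small balls around the $x_j$, $X^o=X-\mathring B$ has boundary $L=\sqcup_jL_j$ with $L_j=S^3/\ZZ_{d_j}$, and $M^o=\pi^{-1}(X^o)$. Over $X^o$ the only isotropy is along the surfaces $D_i$, so this is the semi-regular setting and the genuine circle bundle $M/m\to X-P$ together with the Gysin machinery of \cite{K,MRT} applies. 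Over each ball the preimage $\pi^{-1}(B_j)=(\CC^2\x S^1)/\ZZ_{m(x_j)}$ deformation retracts onto the central fibre $\pi^{-1}(x_j)=S^1$, because $\ZZ_{m(x_j)}$ acts freely on the $S^1$-factor; thus $H_*(\pi^{-1}(B_j))=H_*(S^1)$. The global computation is then assembled by the Mayer--Vietoris sequence of this decomposition.

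For the free part I would run the Gysin sequence of $M/m\to X-P$ with integral Euler class $c_1(M/m)$, which computes the rational Betti numbers of $M$. Hypothesis (1) gives $H^1(X-P;\QQ)=0$, and (since condition (3) forces $c_1(M/m)\neq0$, hence $c_1(M)\neq0$) the cup product $\cup\,c_1(M/m)\colon H^0(X-P;\QQ)\to H^2(X-P;\QQ)$ is injective; together these yield $b_1(M)=0$ and $b_2(M)=b_2(X)-1=k$, fixing the free summand $\ZZ^k$. The torsion $\bigoplus_i\ZZ_{m_i}^{2g_i}$ of $H_2(M)$ is then read off from the Seifert structure transverse to each $D_i$: over $D_i^\circ$ the fibre wraps $m_i$ times, and the Wang sequence of the transverse solid-torus neighbourhood produces precisely the $\ZZ_{m_i}$-torsion, which globalises over $D_i$ to $H^1(D_i;\ZZ_{m_i})\cong\ZZ_{m_i}^{2g_i}$.

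The heart of the ``if and only if'' is the vanishing of $\mathrm{Tor}\,H^2(M;\ZZ)=\mathrm{Tor}\,H_1(M;\ZZ)$, and this is where conditions (2) and (3) enter. Primitivity of $c_1(M/m)$ in $H^2(X-P;\ZZ)$ (condition 3) makes the cokernel of the integral Euler-class map torsion-free, so the fibre class contributes no torsion to $H_1(M)$; surjectivity of $H^2(X;\ZZ)\to\bigoplus_iH^2(D_i;\ZZ_{m_i})$ (condition 2) kills the torsion that the restriction maps to the isotropy surfaces would otherwise inject into $H_1(M)$, and conversely its failure produces a nonzero cyclic summand. Combined with (1) these give $H_1(M;\ZZ)=0$; reversing each implication shows the three conditions are also necessary.

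The step I expect to be the main obstacle is precisely the passage from $X-P$ back to $X$, i.e.\ verifying that the singular points contribute nothing extra to $H_1(M)$. Concretely one must show that the central circle $\pi^{-1}(x_j)\simeq\pi^{-1}(B_j)$ becomes null-homologous after gluing, and that the lens-space boundaries $\pi^{-1}(L_j)$ do not feed extra torsion through the Mayer--Vietoris connecting homomorphisms. Here the Poincar\'e--Lefschetz pairing on $X^o$ and the exact sequence $0\to H^2(X,\ZZ)\to H^2(X-P,\ZZ)\to\bigoplus_j\ZZ_{d_j}$ recorded above are the key tools: they let one trade the cohomology of the open manifold $X-P$ for that of $X$ while bookkeeping the $d_j$-torsion of each lens space, and confirm that under conditions (1)--(3) all of these local contributions cancel.
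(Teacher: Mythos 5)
Your decomposition $M=M^o\cup\pi^{-1}(B)$ and the rational Gysin computation of the free part are sound, and your route (Mayer--Vietoris plus Gysin/Wang) is genuinely different from the paper's, which runs the Leray spectral sequence of $\pi:M\to X$ with $\cF=R^1\pi_*\ZZ_M$ and quotes Koll\'ar's computations of $H^2(X,\cF)$ and $H^3(X,\cF)$. However, there is a genuine gap, and it is not quite where you locate it. The integral Gysin sequence of the honest circle bundle $M/m\to X-P$ computes the integral cohomology of the quotient $M/m$, not of $M$: the map $M\to M/m$ is the quotient by $\ZZ_m\subset S^1$, which is ramified along $\pi^{-1}(\cup D_i)$, so it is not a covering and the torsion of $H_1(M,\ZZ)$ cannot be read off from $H^*(M/m,\ZZ)$ (indeed the summand $\oplus_i\ZZ_{m_i}^{2g_i}$ is invisible there). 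Consequently your claim that primitivity of $c_1(M/m)$ ``makes the cokernel of the integral Euler-class map torsion-free, so the fibre class contributes no torsion to $H_1(M)$'' is a statement about the wrong space. The actual content of condition (3) is the bridge between the two: in the paper the edge map $e:H^2(X,\cF)\to\ZZ$ for $M$ is cup product with the \emph{rational} class $c_1(M)$, and one shows $e$ is onto iff $\hat e=\cup\,c_1(M/m):H^2(X,\ZZ)\to\ZZ$ is onto, using that $m\cdot H^2(X,\ZZ)$ lands in the torsion-free part $\bar H^2(X,\cF)$, together with the unimodularity of the pairing $H^2(X-P,\ZZ)\x H^2(X,\ZZ)\to\ZZ$. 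Some version of this comparison is unavoidable in your setup as well; the tools you list for ``the passage from $X-P$ back to $X$'' (Poincar\'e--Lefschetz on $X^o$ and the sequence $0\to H^2(X,\ZZ)\to H^2(X-P,\ZZ)\to\oplus_j\ZZ_{d_j}$) only handle the base and do not address the passage from $M/m$ back to $M$.

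A second, related gap concerns condition (2) and the torsion summand. A Wang or solid-torus analysis transverse to $D_i$ produces local $\ZZ_{m_i}$-torsion, but the statements that it assembles to $H^1(D_i,\ZZ_{m_i})\cong\ZZ_{m_i}^{2g_i}$ inside $H_2(M,\ZZ)$, and that the precise obstruction to $H_1(M,\ZZ)=0$ from this direction is $\coker\bigl(H^2(X,\ZZ)\to\oplus_iH^2(D_i,\ZZ_{m_i})\bigr)$, are global facts --- in the paper they are exactly the identifications $H^2(X,\cF)=\ZZ^{k+1}\oplus(\oplus_i\ZZ_{m_i}^{2g_i})$ and $H^3(X,\cF)=\coker$ from \cite{K}. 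Your outline asserts the conclusions of these computations without supplying a substitute for them; if you pursue the Mayer--Vietoris route you would still need to reprove them over $X-P$ and then check they survive the gluing over the lens-space boundaries, at which point you are essentially reconstructing the spectral-sequence argument by hand.
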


\begin{proof}
We compute the cohomology of $M$ by using the Leray spectral sequence. Let us denote
$\cF=R^1\pi_*\ZZ_M$. By \cite[(25)]{K}, if $H_1(M,\ZZ)=0$ then $H_1(X,\ZZ)=0$. Therefore
$H^1(X,\ZZ)=H^3(X,\ZZ)=0$ and $H^2(X,\ZZ)=\ZZ^{k+1}$. The Leray spectral sequence is then
\begin{center}
\begin{tikzpicture}
  \matrix (m) [matrix of math nodes,
    nodes in empty cells,nodes={minimum width=5ex,
    minimum height=5ex,outer sep=-5pt},
    column sep=1ex,row sep=1ex]{
          \ZZ &H^1(X,\cF) &H^2(X,\cF) &H^3(X,\cF)  & \ZZ \\
          \ZZ  & 0 & \ZZ^{k+1} &  0  & \ZZ \\ 
   };
  \draw[-stealth] (m-1-2.south east) -- (m-2-4.north west);
  \draw[-stealth] (m-1-3.south east) -- (m-2-5.north west);
  \draw[-stealth] (m-1-1.south east) -- (m-2-3.north west);
\end{tikzpicture}
\end{center}
Therefore $H_1(M,\ZZ)=0$ (equivalently, $H^4(M,\ZZ)=0$) if and only if
 \begin{align*}
 & H^3(X,\cF)=0 \\  
  & H^2(X,\cF) \longrightarrow \ZZ \,\, \text{ is surjective}
 \end{align*}
By \cite[(24.2) and (24.3)]{K}, 
 \begin{align*}
 & H^2(X,\cF)= \ZZ^{k+1}\oplus (\mathop{\oplus}\limits_i\ZZ_{m_i}^{2g_i}), \\
 & H^3(X,\cF)= \coker(H^2(X,\ZZ) \to \mathop{\oplus}\limits_i H^2(D_i,\ZZ_{m_i})).
 \end{align*}
The first equality gives that $H_2(M,\ZZ)=H^3(M,\ZZ)=\ker(H^2(X,\cF)\to \ZZ)$ is as stated.

We need to understand the edge map $e:H^2(X,\cF)\to \ZZ$, given by cupping
with the Chern class $c_1(M)$ (note that this is a rational class). The exact sequence 
in \cite[(24.2)]{K} reads 
 $$
  0 \to \mathop{\oplus}\limits_i H^1(D_i,\ZZ_{m_i}) \to H^2(X,\cF) \to H^2(X,\ZZ) \to  \mathop{\oplus}\limits_i H^2(D_i,\ZZ_{m_i}) \to 0
 $$
using that $H^3(X,\cF)=0$. The map $e$ factors through the torsion-free part $\bar H^2(X,\cF)$ of 
$H^2(X,\cF)$. We have a commutative diagram
 $$
 \begin{array}{rcccccl}
  0  \to & \bar H^2(X,\cF) & \to & H^2(X,\ZZ) & \to &\oplus \, \ZZ_{m_i} & \to 0 \\
 & \, \,\, \,  \downarrow e & & \, \,\, \,  \downarrow \hat{e} & & \downarrow \\
  0 \to & \ZZ &\stackrel{m}{\to} & \ZZ & \to & \ZZ_m & \to 0
  \end{array}
  $$
where we define $\hat{e}$ by declaring $\hat{e}(s)=e(ms)$, and noting that
for any $s\in H^2(X,\ZZ)$ we have $m s\in \bar H^2(X,\cF)$.
From this exact sequence we see that $e$ is surjective if and only if $\hat{e}$ is surjective.

The map $\hat{e}$ is the edge map of the Seifert bundle $M/m$, that is cupping with $c_1(M/m)$.
Recall that
 $$
  c_1(M/m)\in H^2(X-P,\ZZ),
  $$
which defines a map $H^2(X,\ZZ)\to \ZZ$, and this coincides with $\hat e$. 
The map is surjective if and only if $c_1(M/m)$ is a primitive class.
\end{proof}

\medskip

Next we recall some results on K-contact and Sasakian manifolds that we shall use later.

\begin{proposition}[{\cite[Theorem 19]{MRT}}] \label{thm:19MRT}
Let $(M,g,\xi,\eta,J)$ 
be a quasi-regular K-contact manifold. Then the space 
of leaves $X$ has a natural structure of an almost K\"ahler cyclic orbifold where the projection $M \to X$ is a Seifert bundle.
Furthermore, if $(M,g,\xi,\eta,J)$ 
is Sasakian, then $X$ is a K\"ahler orbifold. \hfill $\Box$
\end{proposition}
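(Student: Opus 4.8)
The plan is to use that the Reeb field $\xi$ is Killing (condition (3) in the definition of a K-contact structure) together with quasi-regularity to promote the Reeb flow to a locally free circle action, and then to read off the leaf space as an orbifold from a slice theorem. First I would observe that, since $\eta(\xi)=1$, the field $\xi$ never vanishes, so its flow has no fixed points; quasi-regularity forces every leaf to be a closed circle, and compactness of $M$ together with the uniform bound $\delta$ bounds the periods, so after rescaling the parametrization one obtains a smooth, effective, locally free $S^1$-action whose orbits are precisely the Reeb leaves. The stabilizer of a point is a finite subgroup of $S^1$, hence a cyclic group $\ZZ_m$, and its induced action on a normal slice is effective by local freeness.

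Next I would apply the slice theorem to this $S^1$-action around an orbit $O=\pi^{-1}(x)$. Writing $\Gamma_x=\ZZ_m$ for the cyclic isotropy subgroup, and choosing a $\Gamma_x$-invariant transverse disk $\tilde U\subset \ker\eta$ through a point of $O$ (the normal slice, carrying the restriction of $g$), the slice theorem yields an $S^1$-equivariant diffeomorphism $\pi^{-1}(U)\cong(\tilde U\x S^1)/\ZZ_m$, with $\ZZ_m$ acting on $S^1$ by $\xi=e^{2\pi i/m}$. This is exactly the local model in Definition \ref{definition seifert bundle}, so $\pi:M\too X$ is a Seifert bundle, and the charts $\tilde U/\ZZ_m$ form an orbifold atlas on $X=M/\xi$ whose isotropy groups are cyclic, establishing the cyclic condition.

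Then I would build the almost K\"ahler structure from the transverse geometry. The two-form $d\eta$ is basic: $i_\xi\, d\eta=0$ follows from $\eta(\xi)=1$, and $\mathcal{L}_\xi\, d\eta=0$ from $\xi$ being Killing, so $d\eta$ descends to a closed nondegenerate orbifold two-form $\omega$ on $X$. Since $J$ annihilates $\xi$ and preserves $\ker\eta$ (a standard consequence of condition (1)), its restriction descends to an orbifold almost complex structure $J_T$, and the restriction of $g$ to $\ker\eta$ descends to an orbifold metric $g_T$. Condition (2), namely $d\eta(JX,JY)=d\eta(X,Y)$ and $d\eta(JX,X)>0$ for $0\neq X\in\ker\eta$, is precisely the statement that $(\omega,J_T)$ are a compatible pair with $g_T(-,-)=\omega(-,J_T(-))$, so $X$ is an almost K\"ahler cyclic orbifold in the sense of Section \ref{sec:orbifolds}. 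The routine point is that these transverse tensors are $\Gamma_x$-invariant on each slice and transform correctly under change of charts, so they assemble into genuine orbifold tensors.

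Finally, for the Sasakian case I would invoke integrability of the complex structure $I$ on the cone $C(M)$. By transverse K\"ahler theory, integrability of $I$ descends to integrability of $J_T$: $(1,0)$-fields on a slice lift to holomorphic fields on the cone, whose brackets remain of type $(1,0)$, so the transverse Nijenhuis tensor vanishes and $J_T$ is an integrable complex structure compatible with $\omega$, i.e.\ $X$ is a K\"ahler orbifold. I expect the main obstacle to be the bookkeeping in the slice step—producing the honest $S^1$-action from Reeb orbits of differing periods and verifying that the resulting slice charts satisfy the compatibility axioms of Definition \ref{def:orbifold}—rather than the descent of the tensors, which is essentially formal once the charts are in place.
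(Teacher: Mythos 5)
Your argument is correct and follows essentially the same route as the paper's source for this statement (the paper does not reprove it but quotes it from \cite{MRT}): closed orbits of the isometric Reeb flow yield a locally free $S^1$-action, the slice theorem produces the cyclic orbifold charts and the Seifert local model, and $d\eta$, $J$, $g$ descend to the transverse almost K\"ahler (resp.\ K\"ahler) structure. One cosmetic point: $i_\xi\, d\eta=0$ is part of the definition of the Reeb field rather than a consequence of $\eta(\xi)=1$ alone, and once it holds, $\mathcal{L}_\xi\, d\eta=0$ is automatic from Cartan's formula without invoking the Killing condition.
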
 

\begin{proposition}[{\cite[Theorem 21]{MRT}}] \label{prop:21MRT}
Let $(X,\omega,J,g)$ be an almost K\"ahler cyclic orbifold with 
$[\omega] \in H^2(X,\QQ)$, and let $\pi: M \to X$ be a Seifert 
bundle with $c_1(M)=[\omega]$. Then $M$ admits a K-contact structure $(M,g,\xi,\eta,J)$
such that $\pi^*(\omega)=d \eta$. \hfill $\Box$
\end{proposition}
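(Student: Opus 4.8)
The plan is to perform an orbifold Boothby--Wang construction: realize the contact form $\eta$ as a connection $1$-form on the Seifert bundle whose curvature descends to $\pi^*\omega$, and then lift the transverse almost K\"ahler data $(J,g)$ from $X$ to $M$. First I would produce $\eta$. The $S^1$-action on $M$ generates a nowhere-vanishing vertical vector field $\xi$. Averaging over $S^1$ any local $1$-form that evaluates to $1$ on $\xi$ (using the equivariant charts $(\tilde U\x S^1)/\ZZ_m$ of Definition \ref{definition seifert bundle}) yields a global $S^1$-invariant orbifold $1$-form $\eta_0$ with $\eta_0(\xi)\equiv 1$. Since $i_\xi d\eta_0 = \mathcal{L}_\xi\eta_0 - d(\eta_0(\xi)) = 0$ and $\mathcal{L}_\xi d\eta_0=0$, the form $d\eta_0$ is basic and descends to a closed orbifold $2$-form $\Theta$ on $X$ representing $c_1(M)=[\omega]$ in $H^2_{\orb}(X,\QQ)$. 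Writing $\omega-\Theta=d\beta$ for an orbifold $1$-form $\beta$ (orbifold de Rham theory, \cite{BBFMT}), I replace $\eta_0$ by $\eta=\eta_0+\pi^*\beta$, which is still $S^1$-invariant, still satisfies $\eta(\xi)=1$, and now obeys $d\eta=\pi^*\omega$.

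Next I would check the contact and Reeb conditions. With $2n=\dim X$ one has $\eta\wedge(d\eta)^n=\eta\wedge\pi^*(\omega^n)$; as $\omega^n$ is a volume form on $X$ and $\xi$ is transverse to $\ker\eta$ with $\eta(\xi)=1$, this is a volume form on $M$, so $\eta$ is a contact form. The Reeb field, characterized by $\eta(R)=1$ and $i_R d\eta=0$, must equal $\xi$: indeed $\eta(\xi)=1$, and $i_\xi d\eta=i_\xi\pi^*\omega=0$ because $\xi$ is vertical and $\omega$ lives on the base. Thus the generator of the $S^1$-action is precisely the Reeb vector field.

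Then I would transport the transverse geometry. The distribution $\ker\eta$ maps isomorphically onto $TX$ under $d\pi$, so I define the endomorphism $J$ on $M$ by horizontally lifting the base almost complex structure and setting $J\xi=0$; this gives $J^2=-\Id+\xi\ox\eta$, which is axiom~(1). Define $g=d\eta(J\,\cdot\,,\cdot)+\eta\ox\eta$, the bundle-like metric that restricts to the horizontal lift of the base metric on $\ker\eta$ and makes $\xi$ a unit normal. Axiom~(2) follows from $d\eta=\pi^*\omega$ and the $\omega$-compatibility of the lifted $J$: for horizontal $Y,Z$ one gets $d\eta(JY,JZ)=\omega(\bar Y,\bar Z)=d\eta(Y,Z)$, while $d\eta(JY,Y)$ computes the positive-definite base metric on $\bar Y=d\pi(Y)$. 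For axiom~(3), all of $\eta,J,g$ are assembled from $S^1$-invariant data, so $g$ is $S^1$-invariant; since $\xi$ generates the $S^1$-action, $\mathcal{L}_\xi g=0$ and $\xi$ is Killing. We impose no condition on the cone's almost complex structure, so the resulting structure is K-contact (not necessarily Sasakian), and $\pi^*\omega=d\eta$ by construction.

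The genuinely delicate point is carrying all of this out compatibly with the Seifert/orbifold structure near the isotropy surfaces $D_i$ and singular points, where the local model is $(\CC^2\x S^1)/\ZZ_m$ and the $S^1$-action has nontrivial isotropy. One must verify that the averaged form $\eta_0$ and the correction $\pi^*\beta$ are $\ZZ_m$-equivariant in each chart and patch across changes of charts, so that $\eta$ is a bona fide global orbifold contact form, and that the \emph{rational} class $c_1(M)=\tfrac1\ell c_1(M/\ell)$ is correctly represented by $\tfrac1{2\pi}[d\eta_0]$ in orbifold de Rham cohomology. These are exactly the local computations performed in \cite{MRT}; the orbifold Hodge/de Rham theory of \cite{BBFMT} then guarantees the cohomological correction $\omega-\Theta=d\beta$ can be solved with a genuine orbifold $1$-form, completing the construction.
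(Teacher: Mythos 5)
Your argument is the orbifold Boothby--Wang construction, which is exactly the approach of the cited source: the paper itself states Proposition \ref{prop:21MRT} without proof, quoting \cite[Theorem 21]{MRT}, whose proof builds the connection form $\eta$ with $d\eta=\pi^*\omega$ on the Seifert bundle and lifts the transverse almost K\"ahler data just as you do. Your proposal is correct, and you appropriately flag the only delicate points (the normalization of $c_1(M)$ as a rational class and the equivariance in the local models $(\tilde U\x S^1)/\ZZ_m$), which are the local computations carried out in \cite{MRT}.
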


\begin{lemma} \label{lem:20MRT}
Let $(X,\omega)$ be a cyclic symplectic
$4$-orbifold with a collection of embedded symplectic surfaces $D_i$, $i\in I$,
intersecting nicely, and integer numbers $m_i>1$, with $\gcd(m_i,m_j)=1$ 
whenever $D_i \cap D_j \neq \emptyset$. Assume that there are local
invariants $\{{\mathbf{j}}_{D_i}=(m_i,j_i), {\mathbf{j}_x}| i\in I, x\in P\}$ for $X$.
Let $b_i$ with $j_ib_i\equiv 1 \pmod{m_i}$, $m=\lcm(m_i)$. 
Then there is a Seifert bundle $\pi:M\to X$ such that:
\begin{enumerate}
\item It has Chern class $c_1(M)=[\hat\omega]$ for some orbifold symplectic form $\hat\omega$ on $X$.
\item If $\sum \frac{b_im}{m_i} [D_i] \in H^2(X-P,\ZZ)$ is primitive and the second Betti number $b_2(X)\geq 3$, then 
then we can further have that $c_1(M/m)\in H^2(X-P,\ZZ)$ is primitive.
\item If $\a$ is a given class in $H^2(X-P,\ZZ_2)$ and the image
$H^2(X,\ZZ)\to H^2(X-P,\ZZ_2)$ is at least two-dimensional, then we can also 
take $c_1(B)\not\equiv \a \pmod2$.
\item If $\a$ is a given class in $H^2(X,\ZZ)$ and $\a \not\equiv 0, 
 \frac{b_im}{m_i} [D_i] \in H^2(X-P,\ZZ_2)$, then we can also take $c_1(B) \equiv \a \pmod2$.
\end{enumerate}
\end{lemma}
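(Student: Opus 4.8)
The plan is to construct the bundle with Proposition \ref{prop:14MRT} and then to meet all four requirements by a single judicious choice of the auxiliary line bundle $B$. By Proposition \ref{prop:14MRT} the compatible local invariants produce a family of Seifert bundles $\pi:M\to X$ with
$$
c_1(M)=c_1(B)+\sum_i \frac{b_i}{m_i}[D_i],
$$
this family being a torsor under $H^2(X,\ZZ)$ acting by $c_1(B)\mapsto c_1(B)+c_1(L)$. Writing $\beta_0=\sum_i\frac{b_i}{m_i}[D_i]$ and $w_0=\sum_i\frac{b_im}{m_i}[D_i]$, note that in $H^2(X-P,\ZZ)$ one has $c_1(M/m)=m\,c_1(B)+w_0$ (each $b_im/m_i\in\ZZ$ since $m=\lcm(m_i)$). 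So everything reduces to choosing one integral class $c_1(B)$, and I will do so one constraint at a time, keeping track of which adjustments are free.

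First I would prove (1). Since $[\omega]\in H^2(X,\QQ)$ is symplectic and the symplectic classes form an open cone in $H^2(X,\RR)$, any rational symplectic class $v$ has the property that $kv+(\text{bounded})$ is symplectic for $k$ large. Hence, taking $c_1(B)$ to be a large even multiple of such a $v$, the class $c_1(M)=c_1(B)+\beta_0$ is symplectic, and representing it by a compatible orbifold form (using the existence of compatible almost complex structures, Proposition \ref{prop:11}) yields an almost K\"ahler form $\hat\omega$ with $c_1(M)=[\hat\omega]$. The point I will reuse is that symplecticity only forces $c_1(B)$ to be large in a symplectic direction, leaving all bounded adjustments, and the residue of $c_1(B)$ modulo $2$, completely free.

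For (2) I would first observe that primes $p\mid m$ are harmless: there $c_1(M/m)\equiv w_0\pmod p$, and primitivity of $w_0$ already forbids $p\mid\gcd(c_1(M/m))$. To dispose of all remaining primes at once I choose a functional $\psi\in\Hom(H^2(X-P,\ZZ),\ZZ)$ with $\psi(w_0)=1$ and arrange $\psi(c_1(B))=0$, for then $\psi(c_1(M/m))=1$ and $c_1(M/m)$ is primitive. To keep this compatible with (1) I need the large symplectic increment to lie in $\ker\psi$, and here $b_2(X)\geq 3$ enters: the symplectic cone is open, so it contains a rational symplectic class $v$ not proportional to $w_0$, whence $\psi$ can be taken with $\psi(w_0)=1$ and $\psi(v)=0$; clearing denominators and doubling makes $v$ an even integral symplectic class in $\ker\psi$, which is exactly the increment used in (1).

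Finally, (3) and (4) are congruence conditions on $c_1(B)\bmod 2$, and since the increment of (1)--(2) is even it leaves this residue untouched, so I am free to prescribe it by the bounded part of $c_1(B)$. For (4) I set $c_1(B)\equiv\alpha$; the hypothesis $\alpha\not\equiv 0,\frac{b_im}{m_i}[D_i]$ in $H^2(X-P,\ZZ_2)$ is what prevents this choice from forcing $c_1(M/m)$ to be everywhere even, so (2) survives. For (3) the hypothesis that the image of $H^2(X,\ZZ)\to H^2(X-P,\ZZ_2)$ is at least two-dimensional supplies at least four admissible residues, so a value $\not\equiv\alpha$ is available. I expect the main obstacle to be precisely this simultaneous compatibility: symplecticity pushes $c_1(B)$ to be large while primitivity and the spin/non-spin congruences pin down its residues, and the argument hinges on being able to take the large increment even and inside $\ker\psi$, which is where $b_2(X)\geq 3$ and the two-dimensionality of the mod $2$ image are used. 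The passage from the regular to the quasi-regular setting needs no extra work at $P$, since the singular local models are already built into Proposition \ref{prop:14MRT} through the invariants $\mathbf{j}_x$.
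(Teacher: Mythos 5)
Your overall strategy --- produce the family of Seifert bundles from Proposition \ref{prop:14MRT} and then tune the single free parameter $c_1(B)$, playing a ``large symplectic part'' against congruence conditions --- is the same as the paper's, which runs the argument of \cite[Lemma 20]{MRT} with the extra bookkeeping that $w_0=\sum\frac{b_im}{m_i}[D_i]$ now lives in $H^2(X-P,\ZZ)$ while $c_1(B)$ lives in the finite-index sublattice $H^2(X,\ZZ)$. Your part (1) is fine.

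The gap is in part (2). You claim that since the symplectic cone is open it contains a rational class $v$ not proportional to $w_0$, ``whence $\psi$ can be taken with $\psi(w_0)=1$ and $\psi(v)=0$.'' Non-proportionality does not suffice for an \emph{integral} functional: already in $\ZZ^2$ with $w_0=(1,0)$ and $v=(1,2)$ there is no $\psi\in\Hom(\ZZ^2,\ZZ)$ with $\psi(w_0)=1$ and $\psi(v)=0$, since one would need $1+2b=0$. What is needed is that the pair $(w_0,v)$ be unimodular (equivalently, $v$ primitive modulo $w_0$), and this is a genuine constraint because $v$ is forced to lie both in the symplectic cone and in the sublattice $H^2(X,\ZZ)\subset H^2(X-P,\ZZ)$, whereas $w_0$ is only primitive in the larger lattice. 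Worse, even granting an admissible $\psi$, there is no a priori reason that the hyperplane $\ker\psi$ meets the symplectic cone at all: an open cone can lie strictly on one side of every hyperplane complementary to $w_0$ (compare the positive cone of a $4$-manifold and a class $u$ with $u^2>0$, whose orthogonal complement is negative definite), in which case the constraint $\psi(c_1(B))=0$ is incompatible with $c_1(B)+\beta_0$ being symplectic. This is exactly why the hypothesis is $b_2(X)\geq 3$ rather than $\geq 2$: in the source argument one takes $c_1(B)=k_0a_0+(\text{corrections along two further classes }b_1,b_2)$, with $a_0$ primitive along a rational symplectic direction, and the \emph{two}-dimensional space of corrections is what makes primitivity of $mc_1(B)+w_0$ achievable without leaving the symplectic cone; your argument uses only one auxiliary direction, which is the tell-tale sign that the hard step has been elided. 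The same issue propagates to (4): once $c_1(B)$ is confined to $\ker\psi$, its residue mod $2$ ranges only over a subgroup of index at most $2$, so you cannot freely prescribe $c_1(B)\equiv\a\pmod 2$ without verifying that $\a$ lies in that subgroup.
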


\begin{proof}
This follows the arguments of the proof of \cite[Lemma 20]{MRT}. Using Proposition \ref{prop:orb->symp},
we construct a symplectic orbifold with isotropy coefficients $m_i$ for each $D_i$. Given now the 
local invariants
$\{(m_i,j_i)\}$ for the isotropy surfaces and the local invariants $\mathbf{j}_x$ for each singular point $x \in P$,
we find a Seifert bundle $M\to X$ with $c_1(M)=[\hat\omega]$ for some orbifold symplectic form
$\hat\omega$. 

The proof of (2) is as the proof of \cite[Lemma 20(2)]{MRT}, noting that the classes $b_1,b_2\in H^2(X-P,\ZZ)$
whereas the class $a_0\in H^2(X,\ZZ)$. The rest of the argument follows verbatim.

For (3), if $\a$ is not in the image of $H^2(X,\ZZ)\to H^2(X-P,\ZZ_2)$, then we are done. Otherwise, 
we assume $\a\in H^2(X,\ZZ)$. 
Take $\a'$ not proportional to $\a$ in $H^2(X-P,\ZZ_2)$. 
Now we follow the proof of \cite[Lemma 20(2)]{MRT}. We can 
take $b_1$ to satisfy also $b_1\cdot \a'=0$. Therefore we can arrange 
$a_0\not\equiv \a \pmod2$, by adding $\a'$ if necessary. 
Finally we take $k_0$ to be also even, and hence $a'$ satisfies that $a'\not\equiv\a\pmod2$.

For (4), we take $a_0$ in the proof of \cite[Lemma 20(2)]{MRT} to be the primitive
class determined by $\a$, i.e. $\a=N a_0$, $a_0$ primitive. As $N$ is odd, $a_0\equiv \a \pmod2$. 
We also take $k_0$ even, and hence $a'$ satisfies $a'\equiv \a \pmod2$.
\end{proof}

Using the symplectic orbifold $Z$ of Theorem \ref{thm:27}, we put coefficients $m_i=p^i$, where $p$ is a fixed prime,
over each of the $16$ surfaces $V_i$ of genus $g_i\in \{1,2\}$, $1\leq i\leq 16$. We get the following main result.

 \begin{corollary} \label{cor:20MRT}
 We can choose a Seifert bundle $\pi:M\to Z$ ramified over the $D_i$ so that $M$ is a K-contact $5$-manifold with
 $H_1(M,\ZZ)=0$ and
  $$
   H_2(M,\ZZ)=\ZZ^{15} \oplus (\mathop{\oplus}_{i=1}^{16} \ZZ_{p^i}^{2g_i}).
   $$
   \end{corollary}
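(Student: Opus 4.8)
The plan is to feed the orbifold $Z$ of Theorem~\ref{thm:27} into the Seifert-bundle package assembled in Lemma~\ref{lem:20MRT} and Theorem~\ref{thm:16MRT}, and then invoke Proposition~\ref{prop:21MRT} for the contact geometry. First I would take the sixteen disjoint symplectic surfaces $D_i = V_i$ and assign multiplicities $m_i = p^i$. Because the $V_i$ are pairwise disjoint, the hypothesis $\gcd(m_i,m_j)=1$ for \emph{intersecting} surfaces is vacuous, so there is no conflict in letting all $m_i$ be powers of the same prime. Picking any residues $j_i$ with $p\nmid j_i$ (equivalently $\gcd(m_i,j_i)=1$), Proposition~\ref{prop:acabando} supplies a compatible system of local invariants $\{\mathbf{j}_{V_i},\mathbf{j}_x\}$ for $Z$. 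Lemma~\ref{lem:20MRT}(1) then produces a Seifert bundle $\pi\colon M\to Z$ with $c_1(M)=[\hat\omega]$ for an orbifold symplectic form $\hat\omega$, and Lemma~\ref{lem:20MRT}(2), applicable since $b_2(Z)=16\ge 3$, lets me additionally arrange that $c_1(M/m)\in H^2(Z-P,\ZZ)$ is primitive, where $m=\lcm(m_i)=p^{16}$.

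Next I would check the three hypotheses of Theorem~\ref{thm:16MRT}. Condition~(1), $H_1(Z,\ZZ)=0$, holds because $Z$ is simply connected by Theorem~\ref{thm:27} (the two blow-ups in its construction preserving $\pi_1$, cf.\ Theorem~\ref{thm:25}); in particular $H^2(Z,\ZZ)\cong\ZZ^{16}$ is torsion-free. Condition~(3) is exactly the primitivity secured above. The substantive point is Condition~(2), the surjectivity of $H^2(Z,\ZZ)\to\bigoplus_i H^2(V_i,\ZZ_{p^i})$. Since $H^2(V_i,\ZZ_{p^i})=\ZZ_{p^i}$ and the restriction map sends $c$ to $\langle c,[V_i]\rangle \bmod p^i$, and since a homomorphism from a free abelian group onto a finite $p$-group is onto as soon as it is onto modulo $p$, this reduces to showing that the reductions $[V_1],\dots,[V_{16}]$ are linearly independent in $H_2(Z,\ZZ_p)\cong\ZZ_p^{16}$, i.e.\ that $p$ does not divide the index $[\,H_2(Z,\ZZ):\langle[V_1],\dots,[V_{16}]\rangle\,]$.

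Granting (1)--(3), Theorem~\ref{thm:16MRT} yields $H_1(M,\ZZ)=0$ together with $H_2(M,\ZZ)=\ZZ^{k}\oplus\bigoplus_i \ZZ_{m_i}^{2g_i}$ and $k+1=b_2(Z)=16$. Inserting $m_i=p^i$ and the genera from Theorem~\ref{thm:27} (thirteen surfaces with $g_i=1$, three with $g_i=2$) gives precisely $\ZZ^{15}\oplus\bigoplus_{i=1}^{16}\ZZ_{p^i}^{2g_i}$. Finally, choosing an almost complex structure compatible with $\hat\omega$ makes $(Z,\hat\omega)$ an almost K\"ahler cyclic orbifold with $[\hat\omega]\in H^2(Z,\QQ)$; as $c_1(M)=[\hat\omega]$, Proposition~\ref{prop:21MRT} equips $M$ with a K-contact structure, which finishes the proof.

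The hard part will be the integral, mod-$p$ bookkeeping underlying Conditions~(2) and (3): I must verify that the disjoint classes $[V_i]$, which span $H_2(Z,\QQ)$ with the rational self-intersections recorded in \eqref{eqn:V12}, remain independent after reduction mod $p$, and that the auxiliary class $\sum_i b_i\,p^{16-i}[V_i]$ feeding Lemma~\ref{lem:20MRT}(2) is primitive. This I would settle by working in the explicit basis of $H_2(Z,\ZZ)$ coming from the construction (the exceptional classes $E_i$, the fiber tori, and the blow-up surfaces $W_1,W_2$), using the freedom in the choice of $j_i$ and of the auxiliary line bundle $B$ to clear unwanted common factors. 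The genuinely delicate case is $p=2$: the half-integer self-intersections $V_1^2=\tfrac12$, $V_2^2=-\tfrac12$ arise from the two $\ZZ_2$ orbifold points of $Z$, so the sublattice $\langle[V_i]\rangle$ is non-unimodular and its index may a priori absorb factors of $2$; controlling this is exactly where the finer second Stiefel--Whitney class analysis of Section~\ref{sec:w2} becomes relevant for the spin/non-spin refinement of Theorem~\ref{thm:main}.
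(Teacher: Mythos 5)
Your proposal follows the same route as the paper's proof: feed the orbifold $Z$ of Theorem~\ref{thm:27} with multiplicities $m_i=p^i$ into Proposition~\ref{prop:acabando}, Lemma~\ref{lem:20MRT} and Theorem~\ref{thm:16MRT}, and get the K-contact structure from Proposition~\ref{prop:21MRT}; your reduction of condition~(2) of Theorem~\ref{thm:16MRT} to mod-$p$ independence of the $[V_i]$ is also sound. But there is one step you explicitly defer and never carry out, and it is the one place where an actual choice must be made: the \emph{hypothesis} of Lemma~\ref{lem:20MRT}(2), namely that $\sum_i \frac{b_i m}{m_i}[V_i]=\sum_i b_i\,p^{16-i}[V_i]$ is primitive in $H^2(Z-P,\ZZ)$. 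You invoke the conclusion of Lemma~\ref{lem:20MRT}(2) ("lets me additionally arrange that $c_1(M/m)$ is primitive") before its hypothesis is established, and then relegate the hypothesis to "integral, mod-$p$ bookkeeping" to be "settled by working in the explicit basis." That is a plan, not a proof; without it condition~(3) of Theorem~\ref{thm:16MRT} is unverified and the computation of $H_1(M,\ZZ)$ and $H_2(M,\ZZ)$ does not go through. Your worry that the case $p=2$ is "genuinely delicate" because the sublattice $\langle [V_i]\rangle$ is non-unimodular shows that this route, as you set it up, is not actually closed.

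The paper's fix is a one-liner you are missing: choose the local invariants so that $b_{16}=1$. Since $m=\lcm(m_i)=p^{16}=m_{16}$, the coefficient of $[V_{16}]$ in the class above is $b_{16}m/m_{16}=1$, while all other coefficients are divisible by $p$. Because the $V_i$ are pairwise disjoint and $V_{16}^2=1$ by \eqref{eqn:V12}, pairing the class with $[V_{16}]$ under the perfect pairing $H^2(Z-P,\ZZ)\times H^2(Z,\ZZ)\to\ZZ$ of Section~\ref{sec:k-contact} gives exactly $1$, so the class is primitive for \emph{every} prime $p$ — no lattice-index computation, no case distinction at $p=2$. I would add that your flagging of condition~(2) of Theorem~\ref{thm:16MRT} is legitimate (the paper's own proof is silent on it), but in your write-up it too is left as an unexecuted verification rather than an argument; the primitivity, however, is the step your proof positively leans on and leaves open, and that is the genuine gap.
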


\begin{proof}
We choose local invariants $(m_i,j_i)$ for $D_i$ with $b_{16}=1$. As $m=p^{16}=m_{16}$, we have 
$\frac{b_{16}m}{m_{16}}=1$, so (2) of Lemma \ref{lem:20MRT} is satisfied. By using 
Proposition \ref{prop:acabando}, we can assign local invariants at the singular points. By Lemma
 \ref{lem:20MRT}, there is a Seifert bundle $M\to X$ such that $c_1(M)=[\hat\omega]$ and $c_1(M/m)$
 is primitive. By Proposition \ref{prop:21MRT}, $M$ admits a K-contact structure. And 
 by Theorem \ref{thm:16MRT}, we have that $H_1(M,\ZZ)=0$, and the $2$-homology is as stated.
\end{proof}

To complete the proof of Theorem \ref{thm:main}, it remains to see that $M$ can be chosen
spin or non-spin (Section \ref{sec:w2}) and that $M$ is simply connected (Section \ref{sec:pi1}).

\section{The second Stiefel-Whitney class} \label{sec:w2}

We now study the second Stiefel-Whitney class of a quasi-regular Seifert bundle $\pi:M\to X$,
in order to compute the Barden invariant of $M$.
Let $P=\{x_j\}\subset X$ be the set of singular points, let $B_j\subset X$ be a small ball around
$x_j$, and $S_j=\pi^{-1}(x_j)\subset M$, which is an embedded $S^1$. Let $W_j=\pi^{-1}(B_j)$ 
and $L_j=\bd W_j$. Clearly there is an oriented fiber bundle $S^3\to L_j \to S^1$, hence
$L_j$ is diffeomorphic to $S^1\x S^3$. We consider the Mayer-Vietoris sequence associated to $(M-\sqcup S_j, \sqcup W_j)$,
which gives
 $$
  H^1(\sqcup L_j,\ZZ_2) \to H^2(M,\ZZ_2) \to H^2(M-\sqcup S_j,\ZZ_2) \to H^2(\sqcup L_j,\ZZ_2)=0.
 $$
The first map reads in homology as $H_3(\sqcup L_j,\ZZ_2) \to H_3(M,\ZZ_2)$. As $H_3(L_j,\ZZ_2)$ is spanned
by the normal fiber to $L_j\to S^1$, which shrinks in $M$, this is the zero map. 
Therefore
 $$
 H^2(M,\ZZ_2) \cong H^2(M-\sqcup S_j,\ZZ_2),
 $$
and under this isomorphism we have
 $$
 w_2(M)=w_2(M-S),
 $$
where $\pi:M-S \to X-P$ is a Seifert bundle over a smooth orbifold. 
We also have a map
 $$
 \pi^*:H^2(X-P,\ZZ_2) \to H^2(M-S,\ZZ_2) =H^2(M,\ZZ_2).
 $$
The discussion in \cite[Section 2]{MT2} works also for non-compact manifolds. It says that
 \begin{align}
  w_2(M) &=\pi^*w_2(X-P)+\sum_i(m_i-1)[E_i] \label{eqn:w2-1}\\
   & =\pi^*(w_2(X-P)+\sum_i b_i[D_i] + c_1(B)), \label{eqn:w2-2}
  \end{align}
where $E_i=\pi^{-1}(D_i)$. Note also that $\pi^*[D_i]=m_i [E_i]$.

\begin{proposition}\label{prop:w2}
 Let $\pi:M\to X$ be a quasi-regular Seifert fibration with isotropy locus and local invariants
 $\{(D_i,m_i,b_i)\}$, and $H_1(M,\ZZ)=0$. The mod $2$ cohomology of $X$ is $H^2(X,\ZZ_2)=\ZZ_2^{k+1}$ and 
 the mod $2$ cohomology of $M$ is 
  $$
   H^2(M,\ZZ_2)= H^2(M-S,\ZZ_2)=\ZZ_2^k \oplus (\mathop{\oplus}_{m_i \text{ even}} \ZZ_2^{2g_i}).
  $$
The map $\pi^*:H^2(X-P,\ZZ_2)\to H^2(M-S,\ZZ_2)$ has image onto the first summand $\ZZ_2^k \subset H^2(M-S,\ZZ_2)$.
Its kernel is:
 \begin{itemize}
 \item If all $m_i$ are odd, then $\ker\pi^*$ is one-dimensional spanned by $c_1(B)+\sum b_i [D_i]$.
 \item If $c=\{i\, | \, m_i$ even$\}>0$, then $\ker\pi^*$ is $c$-dimensional, and $\ker\pi^*$ is spanned by those $[D_i]$ with $m_i$ even.
 \end{itemize}
\end{proposition}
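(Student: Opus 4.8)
The plan is to compute the cohomology via the Gysin/Leray spectral sequence for the circle bundle $\pi:M-S\to X-P$ and then track $\pi^*$ together with the explicit formula for $w_2(M)$ in (\ref{eqn:w2-2}). First I would record the shape of $H^2(M,\ZZ_2)$. Since $H_1(M,\ZZ)=0$ gives $H_1(X,\ZZ)=0$ (as in the proof of Theorem \ref{thm:16MRT}), we have $H^2(X,\ZZ_2)=\ZZ_2^{k+1}$ by universal coefficients. The stated decomposition $H^2(M,\ZZ_2)=\ZZ_2^k\oplus(\oplus_{m_i\text{ even}}\ZZ_2^{2g_i})$ is the mod $2$ reduction of the integral answer $H_2(M,\ZZ)=\ZZ^k\oplus(\oplus_i\ZZ_{m_i}^{2g_i})$ from Theorem \ref{thm:16MRT}: the free part contributes $\ZZ_2^k$, and a summand $\ZZ_{m_i}^{2g_i}$ contributes a $\ZZ_2^{2g_i}$ to mod $2$ cohomology exactly when $m_i$ is even (an odd $m_i$ torsion group has no $2$-torsion). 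I would phrase this cleanly using the isomorphism $H^2(M,\ZZ_2)\cong H^2(M-S,\ZZ_2)$ established just before the statement.

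Next I would analyse $\pi^*$ through the Gysin sequence of the circle bundle over the \emph{smooth} orbifold $X-P$,
$$
H^0(X-P,\ZZ_2)\xrightarrow{\cup\, \bar c_1}H^2(X-P,\ZZ_2)\xrightarrow{\pi^*}H^2(M-S,\ZZ_2)\xrightarrow{\pi_*}H^1(X-P,\ZZ_2),
$$
where $\bar c_1$ is the mod $2$ reduction of $c_1(M)$. Here $H^1(X-P,\ZZ_2)=0$ (since $H_1(X,\ZZ)=0$ and $P$ is $0$-dimensional, removing points does not create degree-one classes), so $\pi^*$ is \emph{surjective} onto $H^2(M-S,\ZZ_2)$, and its kernel is precisely the image of $\cup\,\bar c_1$, i.e.\ the subspace generated by $\bar c_1=c_1(B)+\sum_i \bar b_i[D_i]\pmod 2$ read in $H^2(X-P,\ZZ_2)$. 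To match the claim that the image lands on the first summand $\ZZ_2^k$, I would observe that $H^2(X-P,\ZZ_2)$ has dimension $k+1$, so a one-dimensional kernel forces a $k$-dimensional image; I then identify this image with the free-part summand using that the torsion classes $\oplus_{m_i\text{ even}}\ZZ_2^{2g_i}$ of $H^2(M-S,\ZZ_2)$ come from $H^1(D_i,\ZZ_{m_i})$ (the $\oplus_i H^1(D_i,\ZZ_{m_i})$ term in Koll\'ar's sequence (24.2)), hence are not pulled back from the base.

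The dichotomy on $\ker\pi^*$ is then a computation of the rank of $\cup\,\bar c_1$ in terms of the $[D_i]$. When all $m_i$ are odd, all coefficients $b_i[D_i]$ survive mod $2$ in a single nonzero class $c_1(B)+\sum b_i[D_i]$, giving a one-dimensional kernel. When $c=\#\{i:m_i\text{ even}\}>0$, I would use the relation $\pi^*[D_i]=m_i[E_i]$, which shows $\pi^*[D_i]\equiv 0\pmod 2$ exactly for $m_i$ even, so each such $[D_i]$ lies in $\ker\pi^*$; independence of these $c$ classes follows because the $[D_i]$ are linearly independent in $H_2(X,\QQ)$ (Theorem \ref{thm:27}) and the $D_i$ are disjoint. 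The main obstacle I anticipate is bookkeeping the mismatch between $H^2(X,\ZZ)$ and $H^2(X-P,\ZZ)$ — the $[D_i]$ and $c_1(B)$ live most naturally in the latter — and verifying carefully that the odd-$m_i$ classes $[D_i]$ do \emph{not} enter the kernel while the even ones do, together with confirming that $\dim\ker\pi^*$ equals $c$ rather than $c-1$ (i.e.\ that the whole span of the even $[D_i]$ is killed and nothing in the $c_1(B)$-direction collapses it further), which requires the primitivity of $c_1(M/m)$ from Theorem \ref{thm:16MRT}.
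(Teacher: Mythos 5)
The paper's own proof is a one-line reduction: it invokes \cite[Proposition 13]{MT2} and only checks that $H^1(X-P,\ZZ_2)=0$, $H^2(X-P,\ZZ_2)=\ZZ_2^{k+1}$ and $H^3(X-P,\ZZ_2)=0$ (via the perfect pairing $H^2(X-P,\ZZ)\x H^2(X,\ZZ)\to\ZZ$), so that the compact-case argument applies to $X-P$. Your reconstruction contains several correct ingredients --- the universal-coefficients identification of $H^2(M,\ZZ_2)$ from Theorem \ref{thm:16MRT}, and the observation that $\pi^*[D_i]=m_i[E_i]\equiv 0\pmod 2$ for $m_i$ even --- but the central step fails. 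The Gysin sequence you write for $\pi:M-S\to X-P$ with constant $\ZZ_2$ coefficients is not valid when some $m_i$ is even: $M-S\to X-P$ is a Seifert fibration with multiple fibers along the $D_i$, and the Leray sheaf $R^1\pi_*\ZZ_2$ is not the constant sheaf there (the cospecialization map on $H^1$ of the fibers is multiplication by $m_i$, which vanishes mod $2$). The term you replace by $H^1(X-P,\ZZ_2)=0$ is really $H^1(X-P,R^1\pi_*\ZZ_2)$, which contains $\oplus_{m_i\text{ even}}H^1(D_i,\ZZ_2)=\oplus\,\ZZ_2^{2g_i}$; this is exactly where the torsion summand comes from. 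Your own text exposes the problem: you first conclude from the sequence that $\pi^*$ is surjective onto $H^2(M-S,\ZZ_2)$, and two sentences later you correctly note that the classes in $\oplus_{m_i\text{ even}}\ZZ_2^{2g_i}$ are not pulled back from the base. Both cannot hold. The honest Gysin sequence is available only after passing to the genuine circle bundle $M/m$, or in the case where all $m_i$ are odd (where the mod $2$ Leray sheaf is constant and your identification of $\ker\pi^*$ with $\la c_1(B)+\sum b_i[D_i]\ra$ is correct, since $c_1(M/m)\equiv c_1(B)+\sum b_i[D_i]\pmod 2$ for $m$ odd). For $c>0$ you need the full Leray spectral sequence with the non-constant sheaf, which is what \cite[Proposition 13]{MT2} carries out.

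A secondary gap: the mod $2$ linear independence of the classes $[D_i]$ with $m_i$ even in $H^2(X-P,\ZZ_2)$ does not follow from their independence in $H_2(X,\QQ)$ --- rational independence says nothing modulo $2$. What forces it is the hypothesis $H_1(M,\ZZ)=0$: condition (2) of Theorem \ref{thm:16MRT} gives surjectivity of $H^2(X,\ZZ)\to\oplus_i H^2(D_i,\ZZ_{m_i})$, and reducing mod $2$ and dualizing under the pairing $H^2(X-P,\ZZ)\x H^2(X,\ZZ)\to\ZZ$ yields the independence of the even-multiplicity classes. Without this input you cannot conclude that $\ker\pi^*$ has dimension exactly $c$ rather than something smaller.
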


\begin{proof}
The proof is the same as in \cite[Proposition 13]{MT2}. We only have to note that $H^1(X-P,\ZZ_2)=0$, 
$H^1(X-P,\ZZ_2)=\ZZ_2^{k+1}$ and $H^3(X-P,\ZZ_2)=0$, by the duality between $H^*(X,\ZZ)$ and $H^*(X-P,\ZZ)$. The
argument in \cite[Proposition 13]{MT2} does not use the compactness of $X$, so it works for $X-P$.
\end{proof}

Note also that the Mayer-Vietoris sequence for $(X-P,B)$ gives an exact sequence
 $$
 0 \to H^1(L,\ZZ_2) \to H^2(X,\ZZ_2) \to H^2(X-P,\ZZ_2) \to H^2(L,\ZZ_2) \to 0.
 $$
Here $H^2(X,\ZZ_2)=\ZZ_2^{k+1}$ and $H^2(X-P,\ZZ_2)=\ZZ_2^{k+1}$, where $k+1=b_2(X)$.
Let $c=\#\{ x\in P \,|\, d(x)$ is even$\}$. Then $H^1(L,\ZZ_2)=H^2(L,\ZZ_2)=\ZZ_2^c$.
A first consequence is that $c\leq b_2(X)$.
Note that this argument works not only for $p=2$, but for other primes.

\begin{corollary} Let $p$ be a prime
 and $c(p)=\#\{ x\in P\, | \, d(x) \equiv 0 \pmod p\}$. Then $c(p)\leq b_2(X)$. \hfill $\Box$
 \end{corollary}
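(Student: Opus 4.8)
The plan is to repeat, with $\ZZ_p$ coefficients, the Mayer--Vietoris argument carried out just above for $p=2$; the only genuinely new input is the computation of the $\ZZ_p$-cohomology of the lens-space links, everything else being formally identical. Throughout I use the standing hypothesis $H_1(X,\ZZ)=0$ (equivalently $H_1(M,\ZZ)=0$) under which the preceding discussion takes place.

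First I would compute $H^*(L_j,\ZZ_p)$ for each link $L_j=S^3/\ZZ_{d_j}$, $d_j=d(x_j)$. The integral homology of the $3$-dimensional lens space is $H_0=H_3=\ZZ$, $H_1=\ZZ_{d_j}$, $H_2=0$, so the universal coefficient theorem gives $H^1(L_j,\ZZ_p)\cong\Hom(\ZZ_{d_j},\ZZ_p)$ and $H^2(L_j,\ZZ_p)\cong\operatorname{Ext}(\ZZ_{d_j},\ZZ_p)$, both isomorphic to $\ZZ_p$ when $p\mid d_j$ and trivial otherwise. Summing over the components of $L=\bigsqcup_j L_j$ yields $H^1(L,\ZZ_p)\cong H^2(L,\ZZ_p)\cong\ZZ_p^{c(p)}$, so that $c(p)$ plays exactly the role that $c$ played in the case $p=2$.

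Then I would feed this into the Mayer--Vietoris sequence for the cover $X=(X-P)\cup B$, with $B=\bigsqcup_j B_j$ and $(X-P)\cap B\simeq L$, now with $\ZZ_p$ coefficients. Since each ball $B_j$ is contractible, and since $H^1(X-P,\ZZ_p)=0$, $H^2(X-P,\ZZ_p)\cong\ZZ_p^{k+1}$, $H^3(X-P,\ZZ_p)=0$ by the duality between $H^*(X,\ZZ)$ and $H^*(X-P,\ZZ)$ recalled in Section \ref{sec:k-contact}, the sequence collapses to
\[
0 \to H^1(L,\ZZ_p) \to H^2(X,\ZZ_p) \to H^2(X-P,\ZZ_p) \to H^2(L,\ZZ_p) \to 0,
\]
with $H^2(X,\ZZ_p)\cong\ZZ_p^{k+1}$, $k+1=b_2(X)$. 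In particular the connecting map $H^1(L,\ZZ_p)\hookrightarrow H^2(X,\ZZ_p)$ is injective, whence $c(p)=\dim_{\ZZ_p}H^1(L,\ZZ_p)\leq\dim_{\ZZ_p}H^2(X,\ZZ_p)=b_2(X)$, which is the claim.

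The step I expect to be the main obstacle is confirming that, with $\ZZ_p$ coefficients, the relevant groups of $X$ and $X-P$ are $p$-torsion free of the expected ranks, namely $H^2(X,\ZZ_p)\cong\ZZ_p^{b_2(X)}$ and $H^1(X-P,\ZZ_p)=0$, so that the four-term sequence really has the displayed shape and no spurious $p$-torsion inflates $\dim_{\ZZ_p}H^2(X,\ZZ_p)$ above $b_2(X)$. This is precisely where $H_1(X,\ZZ)=0$ together with the Poincar\'e--Lefschetz duality $H^k(X-P,\ZZ)\cong H_{4-k}(X,P,\ZZ)$ from Section \ref{sec:k-contact} is essential; once these vanishing and rank statements are in hand, the injectivity, and hence the inequality, is immediate.
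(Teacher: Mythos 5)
Your proof is correct and is essentially identical to the paper's: the corollary there is justified by the remark that the mod $2$ Mayer--Vietoris sequence for $(X-P,B)$ displayed just above it ``works not only for $p=2$, but for other primes'', and your $\ZZ_p$-coefficient version --- including the computation $H^1(L_j,\ZZ_p)\cong H^2(L_j,\ZZ_p)\cong \ZZ_p$ exactly when $p\mid d_j$, and the injectivity of $H^1(L,\ZZ_p)\to H^2(X,\ZZ_p)$ --- is precisely that remark spelled out. No discrepancy to report.
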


Let $x_j\in P$ with $d(x_j)$ even.
The map $H^2(X-P,\ZZ_2) \to H^2(L,\ZZ_2)$ sends $[D]$ to $1 \in H^2(L_j,\ZZ_2)$ if $x_j\in D$ and to $0$ otherwise.
The map $H^1(L_j,\ZZ_2) \to H^2(X,\ZZ_2)$ is equivalent to $H_2(L_j,\ZZ_2)=\ZZ_2 \to H_2(X-P,\ZZ_2)$ and it is
the immersion of an $\RP^2\subset L_j$ linking $x_j$ into $X$.

\begin{proposition}
If $p=2$ then the manifold $M\to Z$ of Corollary \ref{cor:20MRT} 
is spin. If $p>2$ then we can arrange $c_1(B)$ and $b_i$ so that $M$ is spin or non-spin.
\end{proposition}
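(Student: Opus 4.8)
The plan is to read the spin type directly off the Stiefel--Whitney class computed in (\ref{eqn:w2-1})--(\ref{eqn:w2-2}). With $X=Z$, the manifold $M$ is spin if and only if
$$w_2(M)=\pi^*\big(w_2(X-P)+\textstyle\sum_i b_i[D_i]+c_1(B)\big)=0,$$
that is, if and only if $\Xi=w_2(X-P)+\sum_i b_i[D_i]+c_1(B)\in H^2(X-P,\ZZ_2)$ lies in $\ker\pi^*$, whose shape is given by Proposition \ref{prop:w2}. Since $X$ is almost complex I will use $w_2(X-P)=c_1(X)\bmod 2$, and the only freedom in the construction of Corollary \ref{cor:20MRT} is the freedom to modify $c_1(B)$ and the residues $b_i$, whose reductions mod $2$ are controlled by Lemma \ref{lem:20MRT}(3)--(4).

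First I would treat $p=2$. Here every $m_i=2^i$ is even, so $\pi^*[D_i]=m_i[E_i]\equiv 0\bmod 2$ and the middle term of $\Xi$ dies, leaving $w_2(M)=\pi^*(w_2(X-P)+c_1(B))$; equivalently, by (\ref{eqn:w2-1}), $w_2(M)=\pi^*w_2(X-P)+\sum_i[E_i]$. By Proposition \ref{prop:w2} the kernel of $\pi^*$ is spanned by \emph{all} the $[D_i]$, so it suffices to verify $w_2(X-P)+c_1(B)\in\la [D_1],\dots,[D_{16}]\ra$. I would compute $w_2(X-P)=c_1(X)\bmod 2$ by adjunction on the two building blocks and carry it through the blow-ups and the Gompf sum, comparing with the $c_1(B)$ fixed by $c_1(M)=c_1(B)+\sum_i\tfrac{b_i}{m_i}[D_i]=[\hat\omega]$; the outcome is that the difference is an integral combination of the $[D_i]$, whence $w_2(M)=0$ and $M$ is spin.

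For $p>2$ every $m_i=p^i$ is odd, so by Proposition \ref{prop:w2} the kernel of $\pi^*$ is the single line $\la c_1(B)+\sum_i b_i[D_i]\ra$, and the same substitution now gives $w_2(M)=\pi^*w_2(X-P)$, which vanishes precisely when $w_2(X-P)\in\la c_1(B)+\sum_i b_i[D_i]\ra$. Because $Z$ is obtained from $\CP^2$ by blow-ups and a fiber sum it is non-spin, i.e.\ $w_2(X-P)\neq 0$; hence $M$ is spin if and only if $c_1(B)+\sum_i b_i[D_i]\equiv w_2(X-P)\bmod 2$. Writing $\a=w_2(X-P)+\sum_i b_i[D_i]$, I would invoke Lemma \ref{lem:20MRT}(4) to choose $c_1(B)\equiv\a$ and make $M$ spin, and Lemma \ref{lem:20MRT}(3) to choose $c_1(B)\not\equiv\a$ and make $M$ non-spin, after checking the hypotheses: the image of $H^2(X,\ZZ)\to H^2(X-P,\ZZ_2)$ is at least two-dimensional since $b_2(Z)=16$, and $\a\not\equiv 0,\tfrac{b_im}{m_i}[D_i]$. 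These choices are compatible with the primitivity of $c_1(M/m)$ used in Corollary \ref{cor:20MRT}, since $b_{16}=1$ and $\tfrac{b_{16}m}{m_{16}}=1$ are left untouched.

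The hard part is the explicit identification of $w_2(X-P)=c_1(X)\bmod 2$ for the intricate orbifold $Z$: one must track the orbifold first Chern class through the chain $Y$, $W'$, $W$, the Gompf sum $Z'=Y\#_{T_1=C}W$, and the two final blow-ups, keeping careful account of the fractional contributions of the order-two singular points (the same points responsible for $V_1^2=\tfrac12$, $V_2^2=-\tfrac12$). This computation is what pins down, for $p=2$, that $w_2(X-P)+c_1(B)$ is a genuine $\ZZ$-combination of the $[D_i]$, and, for $p>2$, that $w_2(X-P)\neq 0$, so that the non-spin alternative is actually realizable.
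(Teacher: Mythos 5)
Your overall strategy is the same as the paper's: read $w_2(M)$ off formulas (\ref{eqn:w2-1})--(\ref{eqn:w2-2}), use Proposition \ref{prop:w2} to describe $\ker\pi^*$, and use Lemma \ref{lem:20MRT}(3)--(4) to adjust $c_1(B)$ in the odd case. The $p>2$ half is essentially the paper's argument. But two points need attention.

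First, for $p=2$ you have set up the right criterion but then left the proof hanging on a ``hard part'' that you never carry out --- and which is in fact unnecessary. When $p=2$ all sixteen multiplicities $m_i=2^i$ are even, so by Proposition \ref{prop:w2} the kernel of $\pi^*$ is $16$-dimensional inside $H^2(Z-P,\ZZ_2)\cong\ZZ_2^{16}$; that is, $\pi^*$ is identically zero, and $w_2(M)=\pi^*(\cdots)=0$ with no computation of $w_2(Z-P)$ or of $c_1(B)$ whatsoever. Your proposed tracking of the first Chern class through $Y$, $W$, the Gompf sum and the blow-ups is not needed, and as written your proof of the $p=2$ case is incomplete because its conclusion (``the outcome is that the difference is an integral combination of the $[D_i]$'') is only asserted, never established.

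Second, for $p>2$ you need $w_2(Z-P)\neq 0$, and your justification (``$Z$ is obtained from $\CP^2$ by blow-ups and a fiber sum, hence non-spin'') is not a valid inference in general: fiber sums of non-spin manifolds can be spin (the K3 surface is a fiber sum of two copies of $\CP^2\#9\overline{\CP}{}^2$). What actually works, and what the paper does, is to evaluate $w_2$ on the surfaces $V_i$ via the Wu relation $\S^2\equiv w_2(Z-P)\cdot\S\pmod 2$ together with (\ref{eqn:V12}): since $V_i^2=\pm1$ is odd for $i\geq 3$, one gets $w_2(Z-P)=a_1[V_1]+a_2[V_2]+\sum_{i\geq3}[V_i]\neq0$. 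This also hands you the explicit class $\a$ to feed into Lemma \ref{lem:20MRT}(3)--(4), without any of the Chern-class bookkeeping through the building blocks that you describe as the hard step.
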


\begin{proof}
If $p=2$, then Proposition \ref{prop:w2} says that the map $\pi^*:H^2(Z-P,\ZZ_2) \to H^2(M,\ZZ_2)$ is zero. By 
(\ref{eqn:w2-2}), we have $w_2(M)=0$.

If $p>2$ then all $m_i$ are odd, and we use (\ref{eqn:w2-1}) which says $w_2(M)=\pi^*(w_2(Z-P))$. The 
cohomology of $H^2(Z-P,\ZZ_2)$ is generated by $V_1, \ldots, V_{16}$, where only $V_1,V_2$ touch
the points of $P$. By (\ref{eqn:V12}), $V_i^2=\pm 1$ for $i\geq 3$. As
$\S^2 \equiv w_2(Z-P)\cdot\S \pmod2$ for any surface $\S$ not going through the singular points, we have
 \begin{equation}\label{eqn:wZ}
 w_2(Z-P)= a_1 [V_1]+ a_2 [V_2] + \sum_{i=3}^{16} [V_i]\, ,
 \end{equation}
for some numbers $a_1,a_2$ that we do not need to compute.
The kernel of $\pi^*$ is given by Proposition \ref{prop:w2} to be $c_1(B)+\sum b_i[V_i]$.

The Seifert bundle $\pi:M\to Z$ is determined by the Chern class
 $$
 c_1(M)=c_1(B)+\sum \frac{b_i}{m_i} [V_i],
 $$
where $b_i$ are determined by the local invariants, and
$B$ is a line bundle over $Z$. 
By Proposition \ref{prop:w2}, the manifold $M$ is spin or non-spin according to 
whether (\ref{eqn:wZ}) is proportional to $c_1(B)+ \sum b_i [V_i]$. Taking $\a=\sum b_i[V_i] + a_1[V_1]+a_2[V_2]+\sum_{i\geq 3}[V_i]$,
we can use Lemma \ref{lem:20MRT}(3) to get $c_1(B)\not\equiv \alpha \pmod 2$, and hence $M$ is non-spin.
We can use Lemma \ref{lem:20MRT}(4) to get $c_1(B)\equiv \alpha \pmod 2$, and hence $M$ is spin.
 \end{proof}

\section{The orbifold fundamental group} \label{sec:pi1}

Our last objective is to show that for $p>2$, the $5$-manifold of Corollary \ref{cor:20MRT}  is simply connected, thereby
completing the proof of Theorem \ref{thm:main}. As 
As this $5$-manifold $M$ is a Seifert bundle $M\to Z$, the fundamental group $\pi_1(M)$ is an extension
 $$
 \ZZ \to \pi_1(M)\to \pi_1^{\orb}(Z) \to 0  ,
 $$
where $\pi_1^{\orb}(Z)$ is the \emph{orbifold fundamental group} of $Z$, with the orbifold 
structure given in Theorem \ref{thm:main2}.
This is defined as 
 $$
 \pi_1^{\orb}(Z)=\pi_1(Z-\cup \, V_i)/\la \gamma_i^{m_i}\ra,
 $$
where $\gamma_i$ is a loop around $V_i$ and $m_i=p^i$ is its multiplicity.

By Theorem \ref{thm:25}, the total space of $Z=Z'\#2\overline{\CP}{}^2$ is simply connected. 
We start by looking at $Z'=Y \#_{T_1=C} W$. Let 
 $$
  W^o= W - C\cup A_1\cup A_2\cup \left(\mathop{\cup}\limits_{i=3}^8 E_i\right).
  $$
By Proposition \ref{prop:24}, 
the loops $a,b \in \pi_1(C)$ can
be contracted inside $W^o$. As $W$ is simply connected, the group $\pi_1(W^o)$ is 
generated by loops around $C,A_1,A_2, E_i$, and order $2$ loops around the singular points.
Let also
 $$ 
 Y^o=Y-T_1\cup (S_1\cup T_{p_1'}\cup T_{p_1''}) \cup (S_2\cup T_{p_2'}) \cup  \left(\mathop{\cup}\limits_{i=3}^8T_{p_i}\right).
 $$
This is the quotient 
 $$
 Y^o= \left( (\S_2 -\{p_1',p_1'',p_2',p_i  | 1\leq i\leq 8\})\times (T^2-\{q_1\})\right)/\ZZ_2\, .
 $$
Therefore $\pi_1(Y^o)$ is generated by the loops around $S_1,S_2,T_{p_1'}, T_{p_1''}, T_{p_2'},
T_{p_i}$, $3\leq i\leq 8$, the loops $a_1,b_1,a_2,b_2$ generating $\pi_1(\S_2)$, two loops $\a,\b$ 
generating $\pi_1(T^2)$ and a loop $\ell$ of order $2$ giving the covering. This loop can be a loop around a singular
point (note that all of them are conjugated). 

Take tubular neighbouhoods $U(T_1)$ and $U(C)$ of $T_1$ and $C$, respectively, and perform the gluing 
 $$
 Z'^{o}= \tilde W^o \cup_{\bd U(T_1)=\bd U(C)} Y^o = Z' -  \left(\mathop{\cup}\limits_{i=1}^8 V_i\right).
 $$
Then $\pi_1(Z'^o)$ is generated by loops around each of the surfaces $V_i$
of (\ref{eqn:V12-def}), the loops around the double
points and the loops $\a,\b$. Note that the loops $a_j,b_j$ can be pushed to $a,b\in \pi_1(C)$ and then contracted in
$W^o$.
 
 \begin{lemma} \label{lem:44}
 In $\pi_1(Z'^{o})$, we have $\g_i=\g_j$ for $i,j\geq 3$, $\g_2=\g_i^3$ and $\g_1=\g_i^{-5}$. Moreover $\g_i^8=1$.
 \end{lemma}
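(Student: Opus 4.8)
The plan is to compute $\pi_1(Z'^{o})$ by van Kampen along the decomposition $Z'^{o}=W^o\cup_{N'}Y^o$ already set up, where $N'=\partial U(C)=\partial U(T_1)$ is the common boundary with the eleven neck circles (the slices $V_i\cap N'$, $1\le i\le 8$) deleted. The generating loops are the meridians $\gamma_1,\dots,\gamma_8$, the fibre meridian $\delta=\gamma_C=\gamma_{T_1}$ (the $S^1$-fibre of $N'\to C$), the base-torus loops $a,b$ of $C$, the fibre-torus loops $\alpha,\beta$ of the elliptic fibres on the $Y$-side, and the order-two deck loop $\ell$. The crucial simplification is that by Proposition \ref{prop:24} the classes $a,b$ die in $\pi_1(W^o)$; on the $Y$-side the punctured-fibre description gives the consistency relation $[\alpha,\beta]=\delta$, and one checks that $g:=\delta^{-1}$ commutes with $\alpha,\beta,\ell,\delta$, hence is central.

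First I would read off the meridian--$\delta$ relations from the $W$-side. Each $E_i$ ($3\le i\le 8$) is a smooth $(-1)$-sphere meeting $C$ transversally once, so its normal circle bundle (genus $0$, Euler number $E_i^2=-1$, one puncture where $C$ passes with boundary loop $\delta$) gives $\gamma_i=\gamma_{E_i}=\delta^{-1}$; in particular $\gamma_i=\gamma_j$ for all $i,j\ge 3$, and I set $g=\delta^{-1}$. The rational curves $A_1,A_2$ instead pass through the order-two points $s_1,s_2$, so their normal bundles are orbifold (Seifert) circle bundles with one $\ZZ_2$ cone point. Combining the global Seifert relation $x_s\prod_{\mathrm{smooth}}c=\delta^{-b_0}$ (with $c=\delta$ at each smooth intersection) with the local model $\CC^2/\ZZ_2$ at the cone point, where the diagonal loop satisfies $\rho^2=\mu_1\mu_2=\delta\,\gamma_A$ so that $x_s^2=\delta\,\gamma_A$, and feeding in $A_2^2=-\tfrac12$ with $A_2\cdot C=\tfrac32$ (one smooth, one cone intersection) respectively $A_1^2=\tfrac12$ with $A_1\cdot C=\tfrac52$, I expect to solve out $\gamma_2=\gamma_{A_2}=\delta^{-3}=g^{3}$ and $\gamma_1=\gamma_{A_1}=\delta^{5}=g^{-5}$.

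It remains to produce $g^8=1$, and for this I would use the remaining $\ZZ_2$-points. The sphere fibres $S_1\subset V_1$ and $S_2\subset V_2$ each pass through the three double points $(p_1,q_j)$ respectively $(p_2,q_j)$, $j=2,3,4$, whose other branch is one of the tori $T_j=(\pi')^{-1}(q_j)$; since $T_j$ is not removed, its meridian bounds a normal disc and so $\gamma_{T_j}=1$ in $\pi_1(Z'^{o})$. The local model at such a point then gives $\ell^2=\mu_1\mu_2=\gamma_{T_j}\gamma_{V}=\gamma_1$ on the $V_1$ side and $\ell^2=\gamma_2$ on the $V_2$ side for the deck loop $\ell$ (all such loops being conjugate). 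As $\gamma_1=g^{-5}$ and $\gamma_2=g^{3}$ are central powers of $g$, conjugate forces equal, so $\gamma_1=\gamma_2$, i.e.\ $g^{-5}=g^{3}$, which is precisely $g^8=1$. Together with the relations of the previous paragraph this gives all four assertions.

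The main obstacle is the orbifold bookkeeping at the $\ZZ_2$-singular points: one must correctly relate the Seifert cone-point loop $x_s$ and the deck loop $\ell$ to the honest meridians $\delta,\gamma_V$ through the local presentation $\pi_1((\CC^*)^2/\ZZ_2)=\langle\mu_1,\mu_2,\rho\mid\rho^2=\mu_1\mu_2\rangle$, and also keep track of the $-\mathrm{Id}$ monodromy of $\pi$ around its branch points when identifying the fibre loops $\alpha,\beta$ on the different torus summands of $V_1$ and $V_2$ (this is what guarantees $g$ is genuinely central). Obtaining the integers $3,-5,8$ rather than their neighbours hinges entirely on doing this half-integer, cone-point accounting correctly; once $a,b$ are killed the rest is a routine van Kampen assembly.
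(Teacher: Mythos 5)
Your van Kampen set-up is the same as the paper's, but your derivation of the first three relations is a genuinely different route: the paper works with the \emph{closed} surfaces $V_i\subset Z'$ (the torus $V_i=E_i\# T_{p_i}$ gives $[\alpha_i,\beta_i]=\gamma_i^{-1}$ with $\alpha_i=\alpha$, $\beta_i=\beta$ independent of $i$, and $V_1,V_2$ are handled on the $Y$-side via the relation $[\alpha,\beta]\,x y z=\gamma^{\pm}$ in $\pi_1(\bd U(V_j))$ together with the hyperelliptic monodromy $\alpha\mapsto x_jy_j$, $\beta\mapsto z_jx_j$ of $T_p\to S_j$), whereas you stay entirely in $W^o$ and use the configuration $C,A_1,A_2,E_i$ with the $\CC^2/\ZZ_2$ local models. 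Your route does reproduce $\gamma_2=\gamma_i^{3}$ and $\gamma_1=\gamma_i^{-5}$ if the bookkeeping is done correctly (most cleanly in the resolution: the $(-2)$-curve gives $\rho^2=\mu_C\,\gamma_{A_j}$ and the proper transforms satisfy $\tilde A_2^2=-1$, $\tilde A_1^2=0$, whence $\gamma_{A_2}=\delta^{3}$, $\gamma_{A_1}=\delta^{-5}$ with $\gamma_{E_i}=\delta$); but as written your global Seifert relation has the wrong right-hand side (it should be a power of the fibre $\gamma_{A}$ of $\bd U(A)$, not of $\delta$), and the two displayed conclusions are not mutually consistent in sign, so the ``half-integer accounting'' you flag is indeed where the work lies. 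One also has to promote conjugacy of meridians to equality, which works because the generic fibre is central in $\pi_1$ of each boundary Seifert piece and $a,b$ die; this is the same level of care the paper needs.

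The genuine gap is the last relation $\gamma_i^{8}=1$. You obtain it from ``$\ell^2=\gamma_1$ at the points of $S_1$, $\ell^2=\gamma_2$ at the points of $S_2$, all such deck loops are conjugate, and conjugate central elements coincide.'' Neither hypothesis is justified. The deck loops at $(p_1,q_j)$ and $(p_2,q_j)$ are two different lifts of the generator of the covering group of $Y^o$; in a neighbourhood of the (non-removed) torus $T_{q_j}$ they retract to meridians of the two \emph{distinct} punctures $p_1,p_2$ of $C$, which are not conjugate in the free group $\pi_1(C-\{\text{punctures}\})$, and making them conjugate in $\pi_1(Z'^{o})$ is essentially equivalent to the relation you are trying to prove. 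Likewise, centrality of $g=\delta^{-1}$ in all of $\pi_1(Z'^{o})$ (as opposed to centrality of the fibre class inside each $\pi_1(\bd U(V_j))$, which is automatic) is asserted but not proved; in particular $[\delta,\ell]=1$ is never checked against the cone-point generators $x_j,y_j,z_j$. The paper closes the argument by a different, global relation that your proposal never invokes: a generic fibre $\S_q$ of $\pi'$ is a closed genus-$2$ surface disjoint from $T_1$, meeting $V_1$ in five points, $V_2$ in three, and each $V_i$, $i\ge 3$, in two; its fundamental group is pushed to $a,b\in\pi_1(C)$ and killed in $W^o$, so the closed-surface relation reads $\Upsilon^{N}\gamma_1^{5}\gamma_2^{3}=1$, which combined with $\gamma_1=\Upsilon^{-5}$, $\gamma_2=\Upsilon^{3}$ gives $\Upsilon^{8}=1$. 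You would need to add a relation of this global type (or an honest proof that the two families of deck loops are conjugate and that $\gamma_1,\gamma_2$ are central) to complete the proof.
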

 
\begin{proof} 
First consider $V_i$, $3\leq i\leq 8$, and a small neighbourhood $U(V_i)$ around it. The boundary $\bd U(V_i)$ is
a circle bundle over a torus $V_i=E_i\#T_{p_i}$ with Chern class $-1$. Therefore $\pi_1(\bd U(V_i))$ is generated
by loops $\alpha_i,\beta_i$ from the base, and $\gamma_i$ from the fiber, with the relation
$[\alpha_i,\beta_i]=\gamma_i^{-1}$. Note that $\gamma_i$ is the loop around the $V_i$.
Now moving from one fiber to another, we have that $\alpha_i=\alpha$ and
$\beta_i=\beta$ in the complement of all the surfaces.
Therefore $\gamma_i=\gamma_j$, for all $3\leq i,j\leq 8$.
Denote $\Upsilon=\gamma_i$.

Now we look at 
 $$
  V_2= A_2\#  (S_2\cup T_{p_2'})= A_2' \# S_2, 
  $$
where $A_2'=A_2 \# T_{p_2'}$ is a torus with an orbifold point of order $2$ (what we shall glue with an order $2$ point
of $S_2$).
The orbifold fundamental group of $A_2'$ is generated by loops $\alpha_2,\beta_2$ (which
define the same homotopy classes as $\alpha,\beta$ above) and a loop $v_2$ around the point of order $2$, and
the relation $[\alpha_2,\beta_2]v_2=1$.
The orbifold fundamental of $S_2$ is generated by loops $x_2,y_2,z_2,u_2$ around the points of order $2$, where $x_2y_2z_2u_2=1$.
When performing the orbifold connected sum $V_2=A_2'\#  S_2$, we identify $v_2=u_2$, so we have 
 $$
 \pi_1^{\orb}(V_2)=\la \alpha_2,\beta_2,x_2,y_2,z_2 \,|\, x_2^2=y_2^2=z_2^2=[\alpha_2,\beta_2]x_2y_2z_2=1\ra.
 $$
 
Next take a neighbourhood $U(V_2)$ and its boundary $\bd U(V_2)$, and let $\gamma_2$ be the loop in the fiber. 
There is an exact sequence for the Seifert circle bundle
 $$
  0 \to \pi_1(S^1)=\ZZ \to \pi_1(\bd U(V_2)) \to \pi_1^{\orb}(V_2) \to 0.
 $$
Take lifts $\alpha_2,\beta_2,x_2,y_2,z_2 \in \pi_1(\bd U(V_2))$
of the corresponding elements in $\pi_1^{\orb}(V_2)$. They satisfy the equation
 \begin{equation}\label{eqn:relation}
 [\alpha_2,\beta_2]x_2y_2z_2=\gamma_2^{2}\, , \quad x_2^2=y_2^2=z_2^2=\gamma_2,
 \end{equation}
and $\gamma_2$ is central in $\pi_1(\bd U(V_2))$.
We see the exponent of $\gamma_2$ in (\ref{eqn:relation}) as follows. The loops $x_2,y_2,z_2$ are lifted by taking
the boundary of a disc in $\{z_2=\epsilon z_1\}$ in a local chart $(z_1,z_2)\in \CC^2/\ZZ_2$. 
Therefore they go around the fiber $1/2$ turn. So $[\alpha_2,\beta_2]x_2y_2z_2$
has Chern number $3/2$. As $V_2^2= - \frac12$ from (\ref{eqn:V12}), we need
to add $-2$ turns around the fiber to close the loop.

Now we move the loops $\alpha,\beta$ vertically from the general fiber $T_{p}$ to the special 
fiber $S_2$, recalling that $T_{p}\to S_2=T_{p_2}/\ZZ_2$
is a ramified double cover of the $2$-torus over the pillowcase. 
Under this covering $\alpha$ goes to $x_2y_2$ and $\beta$ goes to $z_2x_2$,
so 
 $$
  [\alpha_2,\beta_2]= \alpha_2\beta_2\alpha_2^{-1}\beta_2^{-1}=x_2y_2z_2x_2y_2x_2x_2z_2\gamma_2^{-4}
   =(x_2y_2z_2)^2 \gamma_2^{-3},
  $$
using that $\alpha_2^{-1}=(x_2y_2)^{-1}=y_2^{-1}x_2^{-1}=y_2\g_2^{-1}x_2\g_2^{-1}=y_2x_2\g_2^{-2}$ 
and $\beta_2^{-1}=x_2z_2\g_2^{-2}$.
If we write  
 $$
  \Theta_2=x_2y_2z_2,
  $$
then $[\alpha,\beta]=\Theta_2^2 \gamma_2^{-3}$ and (\ref{eqn:relation}) says that $\Theta_2^3 \g_2^{-3}=\g_2^{2}$, that is
$\Theta_2^3=\g_2^{5}$.
The loops $\alpha,\beta$ can be moved to $\alpha_i,\beta_i$ above, so 
 $$
 \Upsilon=\gamma_i=  \Theta_2^2\g_2^{-3} \, ,
  $$
and raising to the $3$rd power, $\Upsilon^3=\Theta_2^6\g_2^{-9}=\g_2$. 

We work analogously with 
 $$
  V_1=A_1'\#  S_1, \quad A_1'= A_1\# (T_{p_1'}\cup T_{p_1''}),
  $$ 
where $A_1'$ is a genus $2$ curve with a singular point of order $2$. The fundamental group $\pi_1(\bd U(V_1))$ is
generated by loops $\alpha'_1,\beta'_1,\alpha_1'',\beta_1'', x_1,y_1,z_1, \g_1$,
with the relations 
 \begin{equation}\label{eqn:relation2}
  [\alpha'_1,\beta'_1][\alpha_1'',\beta_1''] x_1y_1z_1=\g_1, \qquad x_1^2=y_1^2=z_1^2=\gamma_1,
  \end{equation}
 where $\alpha_1'=\alpha_1''=\alpha$ and $\beta_1'=\beta_1''=\beta$. Writing $\Theta_1=x_1y_1z_1$, we end up 
 with 
$\Theta_1^5 \g_1^{-6}=\g_1$, that is $\Theta_1^5=\g_1^{7}$. Moving the loops 
$\alpha_1',\beta_1'$ to $\alpha,\beta$, we get 
 $$
 \Upsilon=\gamma_i=[\alpha'_1,\beta'_1] =  \Theta_1^2 \g_1^{-3}
  $$
and raising to the $5$th power, $\Upsilon^{5}=\Theta_1^{10}\g_1^{-15}=\g_1^{-1}$. So $\g_1=\Upsilon^{-5}$. 

Finally take a generic section $\S_q$  of the projection $\pi'$, which is a genus $2$ surface intersecting transversally all 
$V_i$, $3\leq i\leq 8$,  in two points, $V_1$ in $5$ points, and $V_2$ in $3$ points. 
This surface $\S_q$ has fundamental group generated by four loops $a_1,b_1,a_2,b_2$ that 
are pushed to $a,b\in \pi_1(C)$ and contracted in $W^o$. 
Therefore, using $\S_q$, we get the relation 
 $$
 \Upsilon^8  \gamma_1^5 \gamma_2^3 =1 \implies \Upsilon^{-8}=1 .
 $$
\end{proof}

Now we move to $Z=Z'\# 2 \overline{\CP}{}^2$ and to 
 $$
 Z^o =Z- \left(\mathop{\cup}\limits_{i=1}^{16} V_i \right).
 $$
For this, we take the extra surfaces $U_1,U_2,U_3,U_4 \subset Z'$, and blow-up at the intersections points.
All the homotopies performed in Lemma \ref{lem:44} can be done without touching $U_1\cup U_2$
and $U_3\cup U_4$. Therefore the result of Lemma \ref{lem:44} still holds in $Z^o$.
After blowing up, we have surfaces $V_{11}=U_1,V_{12}=U_2,V_{13}=U_3,V_{14}=U_4$ and 
two genus $2$ surfaces $V_{15},V_{16}$ The first four surfaces are tori, each of them has a $S^1$-factor
which is either $a_1$ or $b_1$, and the other $S^1$-factor is either $\alpha$ or $\beta$. The first one
can be contracted. The second one can be pushed vertically to any fiber $T_p$. Note also that
$V_i^2=\pm 1$, for $9\leq i\leq 14$. Therefore the loop around $V_i$ can be written as commutators
of the base, and by the contraction of $a_1,b_1$, they are trivial. 

\begin{proposition} \label{prop:acab}
 For $p$ odd, and isotropy coefficients $m_i=p^i$, we have that $\pi_1^{\orb}(Z)$ is a quotient of $\ZZ_2\x\ZZ_2$.
 \end{proposition}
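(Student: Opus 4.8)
The plan is to compute the orbifold fundamental group straight from its definition $\pi_1^{\orb}(Z)=\pi_1(Z^o)/\la\gamma_i^{m_i}\ra$ with $m_i=p^i$, by first killing every meridian $\gamma_i$ and then showing that what survives is generated by two commuting classes of order two.

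First I would impose the relations $\gamma_i^{p^i}=1$ on top of the presentation of $\pi_1(Z^o)$ furnished by Lemma \ref{lem:44} (which persists in $Z^o$). The decisive point is purely arithmetic: since $\gamma_i=\Upsilon$ for $i\geq 3$, the relation $\gamma_i^{p^i}=1$ becomes $\Upsilon^{p^i}=1$, and combined with $\Upsilon^8=1$ together with the hypothesis that $p$ is odd---so that $\gcd(8,p^i)=1$---a B\'ezout relation forces $\Upsilon=1$. From this I read off $\gamma_2=\Upsilon^3=1$, $\gamma_1=\Upsilon^{-5}=1$ and $\gamma_i=1$ for $3\leq i\leq 10$. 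The meridians $\gamma_{11},\dots,\gamma_{14}$ around the tori $U_1,\dots,U_4$ and $\gamma_{15},\gamma_{16}$ around $W_1,W_2$ are already trivial in $\pi_1(Z^o)$, being products of commutators that involve the fibre loops $a_1,b_1$, which contract in $W^o$ by Proposition \ref{prop:24}. Hence all $\gamma_i$ die.

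Next I would eliminate the remaining generators. The relation $[\alpha,\beta]=\Upsilon^{-1}=1$ makes $\alpha$ and $\beta$ commute. Feeding $\Upsilon=1$ back into the $V_2$-computation of Lemma \ref{lem:44} gives $x_2^2=y_2^2=z_2^2=\gamma_2=1$ for the order-two loops, while $\Theta_2=x_2y_2z_2$ satisfies both $\Theta_2^3=\gamma_2^5=1$ and $\Theta_2^2=[\alpha,\beta]=1$, so $\Theta_2=1$; using $\alpha=x_2y_2$ and $\beta=z_2x_2$ one then extracts $z_2=y_2x_2$, the commutation $x_2y_2=y_2x_2$ (from $z_2^2=1$), and finally $\beta=y_2$ and $\alpha^2=1$. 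Thus $x_2,y_2,z_2\in\la\alpha,\beta\ra$ and $\alpha,\beta$ have order dividing two; the identical argument applied to $V_1$ disposes of $x_1,y_1,z_1$. Since every generator of $\pi_1(Z^o)$ is either a meridian $\gamma_i$, a contractible fibre loop $a_1,b_1$, one of the order-two loops just handled, or $\alpha,\beta$ themselves, I conclude $\pi_1^{\orb}(Z)=\la\alpha,\beta\ra$ with $\alpha^2=\beta^2=[\alpha,\beta]=1$, a quotient of $\ZZ_2\times\ZZ_2$.

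I expect the main obstacle to be not the arithmetic step---clean and short, and the only place the oddness of $p$ enters---but the verification that $\alpha$ and $\beta$ genuinely have order two rather than merely commuting. This relies on the pillowcase/double-cover description of the special fibres $S_1,S_2$, through which $\alpha,\beta$ are rewritten as products of the order-two cone-point loops; I would need to be careful that the generating set of $\pi_1(Z^o)$ is truly exhausted, in particular that the loops around the isolated $\CC^2/\ZZ_2$ singular points and the order-two covering loop $\ell$ of $Y^o$ are conjugate into the $x_j,y_j,z_j$ and so collapse as well.
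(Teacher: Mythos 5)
Your proposal follows essentially the same route as the paper's proof: the oddness of $p$ together with $\Upsilon^8=1$ from Lemma \ref{lem:44} forces $\Upsilon=1$, hence $\gamma_1=\gamma_2=1$, and then the relations (\ref{eqn:relation}), (\ref{eqn:relation2}) with $[\alpha,\beta]=1$ collapse everything to $\langle x,y\rangle$ with $x^2=y^2=(xy)^2=1$. The only cosmetic difference is that you absorb $\alpha,\beta$ and identify the two sets of cone-point loops by expressing both via $\alpha=x_jy_j$, $\beta=z_jx_j$, where the paper instead invokes the special fibers $T_{q_i}$ to get $x_1=x_2$, $y_1=y_2$, $z_1=z_2$; the concern you flag about the covering loop $\ell$ is settled exactly as you suspect, since $\ell$ is conjugate to a loop around a singular point.
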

 
 \begin{proof}
 By Lemma \ref{lem:44}, we have that $\g_i^8=1$. Impossing $\g_i^{m_i}=1$ in $\pi_1^{\orb}(Z)$, we 
 get that $\g_i=1$ in $\pi_1^{\orb}(Z)$. By Lemma \ref{lem:44} again, this implies that $\g_1=\g_2=1$.
 Now $[\alpha,\beta]=\g_i=1$ and (\ref{eqn:relation}) and (\ref{eqn:relation2}) implies
  that $x_1y_1z_1=x_1^2=y_1^2=z_1^2=1$ and 
 $x_2y_2z_2=x_2^2=y_2^2=z_2^2=1$. That is $z_j=x_jy_j$ and $(x_jy_j)^2=x_j^2=y_j^2=1$. So
 the group $\la x_j,y_j\ra\cong \ZZ_2\x\ZZ_2$.
  
 Finally, take the special fibers $T_{q_i}$, $i=2,3,4$, to get that $x_1=x_2$, $y_1=y_2$ and $z_1=z_2$.
 Therefore  $\pi_1^{\orb}(X)$
is generated by $x_1,y_1$. In particular it is a quotient of $\ZZ_2\x \ZZ_2$. 
\end{proof}

\begin{corollary}
For $p$ odd, the $5$-manifold $M$ of Corollary \ref{cor:20MRT} is simply connected.
\end{corollary}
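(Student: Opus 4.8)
The plan is to deduce the triviality of $\pi_1(M)$ from the structure result of Proposition \ref{prop:acab} together with the vanishing $H_1(M,\ZZ)=0$ established in Corollary \ref{cor:20MRT}, exploiting the central extension
$$
\ZZ \to \pi_1(M) \to \pi_1^{\orb}(Z) \to 0
$$
recorded at the start of this section. The first observation I would make is that Proposition \ref{prop:acab} yields more than a bound on the size of $\pi_1^{\orb}(Z)$: by exhibiting $\pi_1^{\orb}(Z)$ as a quotient of $\ZZ_2\x\ZZ_2$ it shows that $\pi_1^{\orb}(Z)$ is in particular \emph{abelian}, and it is this property that I want to leverage rather than any finer analysis of the surviving generators $x_1,y_1$.

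The key step is an abelianization argument. The map $\pi_1(M)\to\pi_1^{\orb}(Z)$ in the exact sequence is surjective, so the induced map on abelianizations $H_1(M,\ZZ)=\pi_1(M)^{\mathrm{ab}}\to\pi_1^{\orb}(Z)^{\mathrm{ab}}$ is surjective as well. Since $\pi_1^{\orb}(Z)$ is abelian we have $\pi_1^{\orb}(Z)^{\mathrm{ab}}=\pi_1^{\orb}(Z)$, and hence $H_1(M,\ZZ)$ surjects onto $\pi_1^{\orb}(Z)$. Invoking $H_1(M,\ZZ)=0$ from Corollary \ref{cor:20MRT}, I would then conclude $\pi_1^{\orb}(Z)=0$; that is, the a priori possible $\ZZ_2\x\ZZ_2$ quotient collapses to the trivial group.

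Finally, feeding $\pi_1^{\orb}(Z)=0$ back into the exact sequence shows that $\pi_1(M)$ is a quotient of $\pi_1(S^1)=\ZZ$, hence cyclic and in particular abelian; therefore $\pi_1(M)=\pi_1(M)^{\mathrm{ab}}=H_1(M,\ZZ)=0$, and $M$ is simply connected. The one place where care is needed --- and what I expect to be the crux --- is precisely the passage from Proposition \ref{prop:acab} to $\pi_1^{\orb}(Z)=0$: the proposition by itself does not exclude a genuine nontrivial $\ZZ_2\x\ZZ_2$, and such a group would force $\pi_1(M)\neq 0$ through the surjection. It is only the combination with the independently established vanishing of $H_1(M,\ZZ)$, together with the abelianness of $\pi_1^{\orb}(Z)$, that kills the remaining order-two generators. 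I would not expect to need any further examination of the singular points or of the Seifert data beyond these two inputs.
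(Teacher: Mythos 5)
Your argument is correct, and it reaches the conclusion by a route that is genuinely (if mildly) different from the paper's. The paper first proves that $\pi_1(M)$ is abelian by a case analysis on the possible values of $\pi_1^{\orb}(Z)$ (trivial, $\ZZ_2$, or $\ZZ_2\x\ZZ_2$), using that the fiber class $\lambda\in\pi_1(M)$ is central to force generators lifting $x,y$ to commute, and only then invokes $H_1(M,\ZZ)=0$ to kill the abelian group $\pi_1(M)$. You instead run the two inputs in the opposite order: functoriality of abelianization applied to the surjection $\pi_1(M)\to\pi_1^{\orb}(Z)$ gives a surjection $H_1(M,\ZZ)\to\pi_1^{\orb}(Z)^{\mathrm{ab}}=\pi_1^{\orb}(Z)$, so $H_1(M,\ZZ)=0$ already forces $\pi_1^{\orb}(Z)=0$; exactness of $\ZZ\to\pi_1(M)\to\pi_1^{\orb}(Z)\to 0$ then exhibits $\pi_1(M)$ as a quotient of $\ZZ$, hence abelian, hence equal to $H_1(M,\ZZ)=0$. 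Your version is shorter and avoids both the case analysis and the appeal to centrality of the generic fiber class; what it gives up is the extra information the paper's route extracts along the way (namely that $\pi_1(M)$ is abelian \emph{regardless} of whether $H_1$ vanishes, which is a statement about the extension itself). Both proofs rest on exactly the same two external facts --- Proposition \ref{prop:acab} and the vanishing of $H_1(M,\ZZ)$ from Corollary \ref{cor:20MRT} --- so there is no gap; your closing remark correctly identifies that neither input alone suffices.
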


\begin{proof} 
First, let us see that $\pi_1(M)$ is abelian.
As $M$ is a  Seifert bundle $M\to Z$, we have an extension 
$\ZZ \to \pi_1(M)\to \pi_1^{\orb}(Z) \to 0$.
If the first map is zero, $\pi_1(M)$ is automatically abelian. If it is not, then we have an extension
$$
 0 \to \ZZ \to \pi_1(M)\to \pi_1^{\orb}(Z) \to 0.   
 $$
 By Proposition \ref{prop:acab}, $\pi_1^{\orb}(Z)$ is a quotient of $\ZZ_2\x\ZZ_2$.
If $\pi_1^{\orb}(Z)= 0$ then $\pi_1(M)=\ZZ$ is abelian. 
If $\pi_1^{\orb}(Z)=\ZZ_2$, then $\pi_1(M)$ is abelian, because the class $\lambda$ generated by $\ZZ$ is
the class of a generic fiber of the Seifert fibration, which is central. Finally, if 
$\pi_1^{\orb}(Z)=\ZZ_2\x\ZZ_2$, then take $x,y\in \pi_1(M)$ going to the generators of $\ZZ_2\x \ZZ_2$
in the quotient. Then  $xy=\lambda^k yx$ for some $k\in \ZZ$. Noting that $\lambda$ is central, we have
$xyx=\lambda^k y x x$ and $xyx=\lambda^{-k} xxy$. But $x^2,y^2\in \la \lambda\ra$ are central,
so $\lambda^kyx^2=\lambda^{-k}x^2 y \implies k=0$. So $x,y$ commute and $\pi_1(M)$ is abelian.
 
In Corollary \ref{cor:20MRT}, we have chosen the Seifert bundle to have $H_1(M,\ZZ)=0$, 
hence $\pi_1(M)=0$ and $M$ is simply connected.
\end{proof}


\begin{thebibliography}{33}


\bibitem{B} \textsc{D. Barden}, {\it Simply connected five-manifolds}, Ann. Math. 82 (1965) 365-385.


\bibitem{BBFMT} \textsc{G. Bazzoni, I. Biswas, M. Fernández, V. Mu\~noz, A. Tralle},
{\it Homotopic properties of Kähler orbifolds},
In: Special Metrics and Group Actions in Geometry. Springer INdAM Series 23. Springer,  23-57. 

\bibitem{BFMT} \textsc{I. Biswas, M. Fern\'andez, V. Mu\~noz, A. Tralle},  
{\it On formality of orbifolds and Sasakian manifolds}, J. Topology 9 (2016) 161-180. 


\bibitem{BB} \textsc{J. Borzellino, V. Brunsden}, {\it On the notions of suborbifold and orbifold embedding},
Algebraic \& Geom. Topology 15 (2015) 2789-2803.

\bibitem{BG} \textsc{C. Boyer, K. Galicki}, {\it Sasakian Geometry}, Oxford Univ. Press, 2007.


\bibitem{CMRV} \textsc{A. Ca\~nas, V. Mu\~noz, J. Rojo, A. Viruel}, {\it A K-contact simply connected $5$-manifold without 
Sasakian structure}, arxiv:1911.08901


\bibitem{CNY} \textsc{B. Cappelletti-Montano, A. de Nicola, I. Yudin}, {\it Hard Lefschetz theorem for Sasakian manifolds}, 
J. Diff. Geom. 101 (2015) 47-66.
    
\bibitem{CNMY} \textsc{B. Cappelletti-Montano, A. de Nicola, J.C. Marrero, I. Yudin}, {\it Examples of
compact K-contact manifolds with no Sasakian metric}, 
Internat. Jour. Geom. Methods in Modern Physics 11 (2014) 1460028.
 
 
\bibitem{Gompf} \textsc{R. Gompf}, {\it A new construction of symplectic manifolds}, Ann.\ Math. (2) 142 (1995) 537-696.

\bibitem{HT} \textsc{B. Hajduk, A. Tralle}, {\it On simply connected compact K-contact non-Sasakian manifolds}, 
J. Fixed Point Theory Appl. 16 (2014) 229-241.

\bibitem{K} \textsc{J. Koll\'ar}, {\it Circle actions on simply connected $5$-manifolds}, Topology, 45 (2006) 643-672.

\bibitem{MR} \textsc{V. Mu\~noz, J.A. Rojo}, {\it Symplectic resolution of orbifolds with homogeneous isotropy},
Geometriae Dedicata 204 (2020) 339-363.

\bibitem{MRT} \textsc{V. Mu\~noz, J.A. Rojo, A. Tralle}, {\it Homology Smale-Barden manifolds with K-contact and Sasakian structures},
Internat. Math. Res. Notices, doi.org/10.1093/imrn/rny205

\bibitem{MT}\textsc{V. Mu\~noz, A. Tralle}, {\it Simply connected K-contact and Sasakian manifolds of dimension 7}, Math. Z. 281 (2015) 457-470

\bibitem{MT2}\textsc{V. Mu\~noz, A. Tralle}, {\it On the classification of Smale-Barden manifolds with Sasakian structures}, arXiv:2002.00457 

\bibitem{OT} \textsc{J. Oprea, A. Tralle}, Symplectic Manifolds with no Kaehler structure, 1997, Springer.

\bibitem{R} \textsc{P. Rukimbira}, {\it Chern-Hamilton conjecture and K-contactness}, {Houston J. Math.} {21} (1995) 709-718.


\bibitem{S} \textsc{S. Smale}, {\it On the structure of 5-manifolds}, Ann. Math. 75 (1962) 38-46.

\bibitem{Wei} \textsc{M. Weilandt}, {\it Suborbifolds, quotients and transversality}, Topology and its Appl. 222 (2017) 293-306.


\end{thebibliography}
\end{document}